\documentclass{article}

\usepackage{amsmath}
\usepackage{amsfonts}
\usepackage{latexsym}
\usepackage{graphicx} %
\usepackage{float}
\usepackage{amssymb}
\usepackage{amsthm}
\usepackage[margin=2.5cm]{geometry}
\usepackage{url}
\usepackage{color}
\usepackage{enumerate}
\usepackage{hyperref}
\usepackage{cleveref}   %
\usepackage{stmaryrd}   %
\usepackage{mathtools}  %
\usepackage{mathrsfs}
\usepackage{nicematrix}
\usepackage{todonotes}

\usepackage{tikz}
\usepackage{tikz-cd}

\usepackage[T1]{fontenc}     %
\usepackage{lmodern}         %
\usepackage[utf8]{inputenc}  %

\usepackage{hyperref}

\newcommand{\doi}[1]{
    \href{http://dx.doi.org/#1}{\texttt{doi:#1}}
}
\newcommand{\mailto}[1]{\href{mailto:#1}{\texttt{#1}}}

\newtheorem{definition}{Definition}[section]
\newtheorem{lemma}[definition]{Lemma}
\newtheorem{proposition}[definition]{Proposition}
\newtheorem{example}[definition]{Example}
\newtheorem{question}[definition]{Question}
\newtheorem{corollary}[definition]{Corollary}

\newtheorem{theorem}[definition]{Theorem}
\newtheorem{remark}[definition]{Remark}

\newtheorem{maintheorem}{Theorem}

\newtheorem*{THEOREMI}{Theorem~\ref{thm:not-periodic}}
\newtheorem*{THEOREMII}{Theorem~\ref{mainthm:an-aperiodic-tileset-for-every-quadratic-number}}

\newtheoremstyle{named}{}{}{\itshape}{}{\bfseries}{.}{.5em}{#1\thmnote{ #3}}
\theoremstyle{named}

\numberwithin{equation}{section}

\newcommand{\N}{\mathbb{N}}
\newcommand{\Z}{\mathbb{Z}}
\newcommand{\Q}{\mathbb{Q}}
\newcommand{\R}{\mathbb{R}}
\newcommand{\C}{\mathbb{C}}
\newcommand{\I}{\mathbb{I}}

\newcommand{\Acal}{\mathcal{A}}
\newcommand{\Ccal}{\mathcal{C}}
\newcommand{\Fcal}{\mathcal{F}}

\newcommand{\Mcal}{\mathcal{M}}
\newcommand{\Ocal}{\mathcal{O}}
\newcommand{\Scal}{\mathcal{S}}
\newcommand{\Tcal}{\mathcal{T}}
\newcommand{\Ucal}{\mathcal{U}}
\newcommand{\Zcal}{\mathcal{Z}}

\newcommand{\be}{{\boldsymbol{e}}}
\newcommand{\bn}{{\boldsymbol{n}}}
\newcommand{\bm}{{\boldsymbol{m}}}
\newcommand{\zero}{{\boldsymbol{0}}}

\newcommand{\north}{\textsc{North}}
\newcommand{\east}{\textsc{East}}
\newcommand{\west}{\textsc{West}}
\newcommand{\south}{\textsc{South}}

\newcommand{\hh}{\textsc{h}}
\newcommand{\vv}{\textsc{v}}  %

\newcommand{\shiftclosure}[1]{{\overline{#1}^{\sigma}}}

\DeclareMathOperator{\rank}{rank}
\DeclareMathOperator{\nullity}{nullity}
\DeclareMathOperator{\colspan}{colspan}
\DeclareMathOperator{\nullspace}{nullspace}
\DeclareMathOperator{\Ker}{Ker}
\DeclareMathOperator{\Ima}{Im}
\DeclareMathOperator{\dist}{dist}

\newcommand{\fr}[1]{\left\{#1\right\}}

\newcommand{\langl}{\begin{picture}(4.5,7)
\put(1.1,2.5){\rotatebox{60}{\line(1,0){5.5}}}
\put(1.1,2.5){\rotatebox{300}{\line(1,0){5.5}}}
\end{picture}}
\newcommand{\rangl}{\begin{picture}(4.5,7)
\put(.9,2.5){\rotatebox{120}{\line(1,0){5.5}}}
\put(.9,2.5){\rotatebox{240}{\line(1,0){5.5}}}
\end{picture}}

\newcommand\smallvector[2]{
    \left(\begin{smallmatrix} #1\\#2 \end{smallmatrix}\right)
}

\newcommand\bal[2]{
bal_{#2}(#1)
}

\newcommand\labelinside[6]{
\draw[draw] (#1,#2) rectangle (#1+\size,#2+\size);
\node[rotate=0,black] at (#1+0.8, #2+0.5) {$#3$};
\node[rotate=0,black] at (#1+0.5, #2+0.8) {$#4$};
\node[rotate=0,black] at (#1+0.2, #2+0.5) {$#5$};
\node[rotate=0,black] at (#1+0.5, #2+0.2) {$#6$};
}
\newcommand\labeloutside[6]{
\draw[draw] (#1+\size,#2) -- node[swap]{$#3$} (#1+\size,#2+\size)
                          -- node[swap]{$#4$} (#1,#2+\size)
                          -- node[swap]{$#5$} (#1,#2)
                          -- node[swap]{$#6$} (#1+\size,#2);
}
\newcommand\tile[7]{
\begin{scope}
\draw[draw=none,fill=#1] (#2,#3) rectangle (#2+\size,#3+\size);
\labeloutside{#2}{#3}{#4}{#5}{#6}{#7}
\end{scope}}

\newcommand\tilelabelinside[7]{
\begin{scope}
\draw[draw=none,fill=#1] (#2,#3) rectangle (#2+\size,#3+\size);
\labelinside{#2}{#3}{#4}{#5}{#6}{#7}
\end{scope}}

\newcommand\tilelabelinsideboxplus[7]{
\draw[draw=none,fill=blue!20] (#2+.3*\size,#3) rectangle (#2+.7*\size,#3+\size);
\draw[draw=none,fill=blue!20] (#2,#3+.3*\size) rectangle (#2+\size,#3+.7*\size);
\tilelabelinside{#1}{#2}{#3}{#4}{#5}{#6}{#7}
}
\newcommand\tilelabelinsideboxbar[7]{
\draw[draw=none,fill=blue!20] (#2+.3*\size,#3) rectangle (#2+.7*\size,#3+\size);
\tilelabelinside{#1}{#2}{#3}{#4}{#5}{#6}{#7}
}
\newcommand\tilelabelinsideboxminus[7]{
\draw[draw=none,fill=blue!20] (#2,#3+.3*\size) rectangle (#2+\size,#3+.7*\size);
\tilelabelinside{#1}{#2}{#3}{#4}{#5}{#6}{#7}
}
\newcommand\tilelabelinsidesquare[7]{
\tilelabelinside{#1}{#2}{#3}{#4}{#5}{#6}{#7}
}

\def\size{1}

\newcommand\IdTwoThree{
        \left(\begin{smallmatrix}
        0&1&0\\
        0&0&1
        \end{smallmatrix}\right)}
\newcommand\OneOneMinusoneMinusOne{
        \left(\begin{smallmatrix}
        1\\ 1\\ -1\\ -1\\
        \end{smallmatrix}\right)}

\newcommand\ABCD[8]{
   \left(\begin{smallmatrix} 
       #1& #3& #5& #7\\
       #2& #4& #6& #8
   \end{smallmatrix}\right)
}

\title{An aperiodic set of Wang tiles for every quadratic irrational}
\author{Jarkko Kari\footnote{Department of Mathematics and Statistics, University of Turku, Turku, Finland, \mailto{jkari@utu.fi}}
~and Sébastien Labbé\footnote{CNRS – Université de Montréal CRM-CNRS, Montréal, Canada, \mailto{sebastien.labbe@cnrs.fr}} 
~and Pieter Mostert\footnote{Zithulele, South Africa, \mailto{pi.mostert@gmail.com}}}
\date{June 2025}

\begin{document}

\maketitle

\begin{abstract}
We propose a sufficient condition for the non-periodicity of a set of Wang
tiles. 
It applies to sets of Wang tiles whose tiles have vertical or
horizontal stripes. The proof is based on a geometric argument involving 
a quadrilateral circumscribed to a parabola from which we conclude the
irrationality of the densities of the vertical and horizontal stripes.
We apply the sufficient condition to propose new proofs of non-periodicity of
known sets of Wang tiles, including an encoding of Penrose tilings into 24
Wang tiles and the family of metallic mean Wang tiles.

Conversely, for every pair $(\alpha,\beta)\in[0,1]^2$ of irrational numbers in
the same quadratic number field, we construct a finite aperiodic set of
Wang tiles with stripes that admits a valid tiling whose density of
vertical stripes is $\alpha$ and density of horizontal stripes is $\beta$.

Keywords: aperiodic tilings, Wang tiles, Diophantine quadratic equation, quadrilateral, parabola.

MSC 2020: Primary 52C23; Secondary 37B51, 11D09, 51N20.
\end{abstract}

\setcounter{tocdepth}{1}
\tableofcontents

\section{Introduction}

A tiling of the plane by a set of tiles is a covering of the plane by isometric
copies of the tiles that do not intersect except maybe on their boundary. The
description of the tilings of the plane allowed by a finite set
of tiles is a rich subject within geometry \cite{MR857454} with strong connections
to dynamical systems \cite{MR1970385}, 
number theory \cite{MR2742574} and
computability \cite{MR0297572}. %
In physics, it has direct applications to the theory of quasicrystals
\cite{zbMATH00768067} and aperiodic order \cite{MR3136260}.
We say that a set of tiles is \emph{aperiodic} when two conditions are satisfied:
(\emph{existence}) there exists one tiling of the plane by the tiles and
(\emph{non-periodicity}) none of the tilings are periodic, i.e., invariant under a non-trivial translation.
Proving that a set of tiles is aperiodic is difficult problem,
because proving existence is undecidable in general \cite{MR0216954}.

One strategy to prove that a set of tiles is aperiodic involves the presence of 
a hierarchical structure in the tilings allowed and enforced by the tiles.
A hierarchical structure is described as a substitution rule allowing to construct larger
and larger patches and tilings at the limit, thus proving existence.
If the hierarchical decomposition is uniquely defined, it also proves non-periodicity.
It is also sometimes called the \emph{unique composition property} \cite{MR1156132}.
This is an old strategy \cite{penrose_role_1974} which is still used nowadays
\cite{akiyama_note_2012,MR4770585}. But it often results in a long and technical argument, as
it entails the verification of many combinatorial cases.

A promising approach was proposed in
\cite{MR1417578} where a short proof of non-periodicity was deduced from
multiplicative arithmetic and the fundamental theorem of arithmetics.
Basically, non-periodicity is deduced from the fact that 
the only integer solution to the equation $2^k 3^\ell=1$ is $k=\ell=0$.
This argument was used in~\cite{MR1417576} to design an aperiodic set of 13 Wang tiles, which remained the smallest known such set until the discovery of the 11 tile set and the proof of its minimality by Jeandel and Rao \cite{zbMATH07421483}. The argument was also used in~\cite{MR2369448} to cover a slightly more general family
using integers other than $2$ or $3$.
But, such short arguments for proving non-periodicity remain elusive.

    In this article, we propose a new proof of non-periodicity of sets of tiles 
    based on additive arithmetics. 
    The method works for certain sets of Wang tiles.
    Recall that a Wang tile is a unit square with labelled edges.
    Given a finite set of Wang tiles $\Tcal$, a 
    tiling of the plane by translated copies of the tiles is \emph{valid}
    if the label of the common edge of adjacent tiles is the same.
    We represent a valid tiling as a map $\Z^2\to\Tcal$ called a \emph{configuration}.
    The set $\Omega_\Tcal$ of valid configurations defines what is called a \emph{subshift}
    which is closed for the prodiscrete topology and for the $\Z^2$-action induced by the shift.
    A subshift $\Omega_\Tcal$ is said to be \emph{aperiodic} if it is non-empty and if every
    configuration in $\Omega_\Tcal$ is non-periodic.
    As mentioned above, proving that a subshift of finite type is non-empty is
    undecidable \cite{MR0216954} and proving that it is non-periodic is also
    difficult.

    The method proposed in this article works for sets of Wang tiles with stripes.
    See the precise definition in Section~\ref{sec:wang-tiles-with-stripes}.
    Sets of Wang tiles with stripes include many examples, the simplest non-trivial one
    being the Ammann set of 16 Wang tiles \cite{MR857454}.
    We will use this example to illustrate, in the remainder of the
    introduction, the main results of this article.
We say that the Ammann set $\Tcal$ of 16 Wang tiles 
has stripes because it can be partitioned into four non-empty subsets 
$ \Tcal = \Tcal_\square \cup \Tcal_\boxbar \cup \Tcal_\boxminus \cup \Tcal_\boxplus$
as follows:
\begin{align*}
	\Tcal_\square   &= 
		\left\{
		\begin{tikzpicture}[baseline=.4cm]
		\tilelabelinsidesquare{none}{0}{0}{1}{1}{2}{2}
		\end{tikzpicture}\right\},\\
	\Tcal_\boxminus &= 
		\left\{
		\begin{tikzpicture}[baseline=.4cm]
		\tilelabelinsideboxminus{none}{0*1.1}{0}{4}{2}{6}{1}
		\tilelabelinsideboxminus{none}{1*1.1}{0}{5}{2}{3}{1}
		\tilelabelinsideboxminus{none}{2*1.1}{0}{3}{2}{6}{2}
		\tilelabelinsideboxminus{none}{3*1.1}{0}{5}{1}{4}{1}
		\end{tikzpicture}\right\},\\
	\Tcal_\boxbar   &= 
		\left\{
		\begin{tikzpicture}[baseline=.4cm]
		\tilelabelinsideboxbar{none}{0*1.1}{0}{2}{4}{1}{6}
		\tilelabelinsideboxbar{none}{1*1.1}{0}{2}{5}{1}{3}
		\tilelabelinsideboxbar{none}{2*1.1}{0}{2}{3}{2}{6}
		\tilelabelinsideboxbar{none}{3*1.1}{0}{1}{5}{1}{4}
		\end{tikzpicture}\right\},\\
	\Tcal_\boxplus  &= 
		\left\{
		\begin{tikzpicture}[baseline=.4cm]
		\tilelabelinsideboxplus{none}{0*1.1}{0}{3}{3}{4}{4}
		\tilelabelinsideboxplus{none}{1*1.1}{0}{4}{4}{5}{5}
		\tilelabelinsideboxplus{none}{2*1.1}{0}{6}{6}{3}{3}
		\tilelabelinsideboxplus{none}{3*1.1}{0}{4}{3}{4}{5}
		\tilelabelinsideboxplus{none}{4*1.1}{0}{6}{3}{4}{3}
		\tilelabelinsideboxplus{none}{5*1.1}{0}{3}{4}{5}{4}
		\tilelabelinsideboxplus{none}{6*1.1}{0}{3}{6}{3}{4}
		\end{tikzpicture}\right\}.
\end{align*}
The alphabet of the labels of horizontal sides and vertical sides of
the Ammann set of 16 Wang tiles is $\{1,2,3,4,5,6\}$.
We may observe that for every pair $(u,v)$ of labels of opposite edges 
we have $u\in\{1,2\}$ if and only if $v\in\{1,2\}$
and $u\in\{3,4,5,6\}$ if and only if $v\in\{3,4,5,6\}$.
Thus, we associate the subset $\{3,4,5,6\}$ with the presence of a horizontal or vertical stripe
and the subset $\{1,2\}$ with the absence of such a stripe.
Tilings by Wang tiles with stripes are naturally divided into 
rows and columns with or without stripes, see Figure~\ref{fig:Ammann-tiling}. Note that the 
tile sets by Kari~\cite{MR1417578} and Culik~\cite{MR1417576} have analogous horizontal stripes but no vertical stripes. Our proofs essentially use stripes in both directions.

\begin{figure}
\begin{center}
\includegraphics{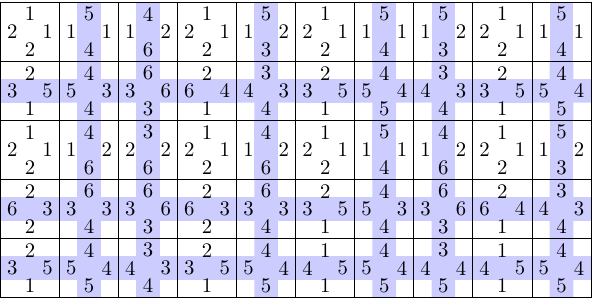}
\end{center}
    \caption{A tiling of a $10\times 5$ rectangle with the Ammann set of 16 Wang tiles.
             The tiling is made of rows and columns with or without stripes
             according to the partition $\{1,2\}\cup\{3,4,5,6\}$ of the
             alphabet of labels.
             The density of these stripes in a valid tiling
             of the plane is $\varphi^{-1}$ where $\varphi$ is the golden ratio.}
    \label{fig:Ammann-tiling}
\end{figure}

\subsection*{A new criteria for non-periodicity}

Our first result proposes a sufficient condition for the density of vertical
and horizontal stripes to be the solution of a system of quadratic equations.
We deduce non-periodicity of the set of tilings when the solutions to the
system are irrational. The sufficient condition depends on a quadrilateral
in the plane $\R^2$ that is determined by a set of Wang tiles with stripes.
Here is what it means for the Ammann set.
Suppose that we map the vertical labels
and the horizontal labels to vectors as follows:
\begin{align*}
    \vv:
1\mapsto \left(\begin{smallmatrix}0\\ 1\end{smallmatrix}\right),\quad
2\mapsto \left(\begin{smallmatrix}0\\ 0\end{smallmatrix}\right),\quad
3\mapsto \left(\begin{smallmatrix}0\\ 1\end{smallmatrix}\right),\quad
4\mapsto \left(\begin{smallmatrix}1\\ 0\end{smallmatrix}\right),\quad
5\mapsto \left(\begin{smallmatrix}1\\ 1\end{smallmatrix}\right),\quad
6\mapsto \left(\begin{smallmatrix}0\\ 0\end{smallmatrix}\right),\\
    \hh:
1\mapsto \left(\begin{smallmatrix}1\\ 0\end{smallmatrix}\right),\quad
2\mapsto \left(\begin{smallmatrix}0\\ 1\end{smallmatrix}\right),\quad
3\mapsto \left(\begin{smallmatrix}1\\ 0\end{smallmatrix}\right),\quad
4\mapsto \left(\begin{smallmatrix}1\\ 1\end{smallmatrix}\right),\quad
5\mapsto \left(\begin{smallmatrix}2\\ 0\end{smallmatrix}\right),\quad
6\mapsto \left(\begin{smallmatrix}0\\ 1\end{smallmatrix}\right).
\end{align*}
For every tile 
$\tau=
    \begin{tikzpicture}[scale=.7,auto,baseline=0.30cm]
    \tilelabelinside{white}{0}{0}{a}{b}{c}{d}
    \end{tikzpicture}
\in\Tcal$, we observe that
$\vv(a) + \hh(b) - \vv(c) - \hh(d)$
takes only 4 distinct values
and the value depends only on the presence or absence of the stripes.
More precisely, for every tile $\tau\in\Tcal$, we have
\begin{equation*}
    \vv(a) + \hh(b) - \vv(c) - \hh(d)
        =
\begin{cases}
    A & \text{ if }\tau\in\Tcal_\square,\\
    B & \text{ if }\tau\in\Tcal_\boxminus,\\
    C & \text{ if }\tau\in\Tcal_\boxbar,\\
    D & \text{ if }\tau \in\Tcal_\boxplus,
\end{cases}
\end{equation*}
where
\[
A=\left(\begin{smallmatrix}1\\0\end{smallmatrix}\right),\quad
B=\left(\begin{smallmatrix}0\\1\end{smallmatrix}\right),\quad
C=\left(\begin{smallmatrix}1\\-1\end{smallmatrix}\right)\quad\text{ and }\quad
D=\left(\begin{smallmatrix}-1\\0\end{smallmatrix}\right).
\]
We see the points $A$, $B$, $D$ and $C$ (in that particular order) as the
consecutive vertices of a quadrilateral in the plane $\R^2$.
Note that the quadrilateral can be simple (not self-intersecting) or complex
(self-intersecting).
When the equation above holds for every tile, we say that the set  $\Tcal$ of Wang tiles
with stripes \emph{determines the quadrilateral $ABDC$}
(see Definition~\ref{def:determines-a-quadrilateral}).
In this situation, we prove the following result.

\newcommand\StatementMainTHEOREMI{
Let $\Tcal$ be a set of Wang tiles
having horizontal and vertical stripes with partition
$ \Tcal = \Tcal_\square \cup \Tcal_\boxbar \cup \Tcal_\boxminus \cup \Tcal_\boxplus$.
Suppose that $\Tcal$ determines the quadrilateral $ABDC$ for some integer points
$A=\left(\begin{smallmatrix} A_1\\A_2 \end{smallmatrix}\right),
 B=\left(\begin{smallmatrix} B_1\\B_2 \end{smallmatrix}\right),
 C=\left(\begin{smallmatrix} C_1\\C_2 \end{smallmatrix}\right),
 D=\left(\begin{smallmatrix} D_1\\D_2 \end{smallmatrix}\right)\in\Z^2$.
If
    \[
    \det\left(\begin{smallmatrix} A_1&D_1\\A_2&D_2 \end{smallmatrix}\right)=0,
        \qquad
    \det\left(\begin{smallmatrix} B_1&D_1\\B_2&D_2 \end{smallmatrix}\right)
    =-\det\left(\begin{smallmatrix} C_1&D_1\\C_2&D_2 \end{smallmatrix}\right)\neq0
    \]
    and there exists $i\in\{1,2\}$ such that $(B_i+C_i)^2-4A_iD_i>0$ is not a perfect square,
    then the set of Wang tiles $\Tcal$ admits no periodic tiling.
}

\begin{maintheorem}\label{thm:not-periodic}
    \StatementMainTHEOREMI
\end{maintheorem}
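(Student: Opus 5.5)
Suppose for contradiction that $\Tcal$ admits a periodic tiling. Then it admits a doubly periodic one, with periods $(p,0)$ and $(0,q)$ for some $p,q\ge1$. Restrict to a $p\times q$ fundamental torus. Let $r$ be the number of columns carrying a vertical stripe and $s$ the number of rows carrying a horizontal stripe. By the definition of stripes, a column either has a vertical stripe in all its tiles or in none, and likewise for rows. Hence the tile at position $(x,y)$ lies in $\Tcal_\square$, $\Tcal_\boxbar$, $\Tcal_\boxminus$ or $\Tcal_\boxplus$ according only to whether column $x$ and row $y$ are striped. Counting, the torus contains
\begin{itemize}
\item $(p-r)(q-s)$ tiles of $\Tcal_\square$,
\item $(p-r)s$ tiles of $\Tcal_\boxminus$,
\item $r(q-s)$ tiles of $\Tcal_\boxbar$,
\item $rs$ tiles of $\Tcal_\boxplus$.
\end{itemize}

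Next sum $\vv(a)+\hh(b)-\vv(c)-\hh(d)$ over all tiles of the torus. By periodicity, every vertical edge label appears once as an east label $a$ and once as a west label $c$. The same holds for horizontal edges with north label $b$ and south label $d$. So the sum telescopes to $\zero$, and we get
\[
(p-r)(q-s)A+(p-r)sB+r(q-s)C+rsD=\zero .
\]
Divide by $pq$ and set $x=r/p$, $y=s/q$, which are rational numbers in $[0,1]$. This gives the vector equation
\[
(1-x)(1-y)A+(1-x)yB+x(1-y)C+xyD=\zero,
\]
a bilinear (in fact bi-affine) system of two equations in $x,y$.

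The step I expect to be the main obstacle is eliminating one variable using the determinant hypotheses. Take the determinant of the vector equation with $D$. The $D$ term vanishes, and $\det(A,D)=0$ kills the $A$ term. Writing $\delta=\det(B,D)=-\det(C,D)\ne0$, what remains is $\delta\bigl((1-x)y-x(1-y)\bigr)=0$. This simplifies to $\delta(y-x)=0$, so $x=y$.

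Substitute $y=x$ into coordinate $i$ to get
\[
(1-x)^2A_i+x(1-x)(B_i+C_i)+x^2D_i=0 .
\]
Suppose first $x\neq1$. Put $t=x/(1-x)\in\Q_{\ge0}$ and divide by $(1-x)^2$, which gives $D_it^2+(B_i+C_i)t+A_i=0$. If $D_i\ne0$, a rational root forces the discriminant $(B_i+C_i)^2-4A_iD_i$ to be a perfect square of a rational, hence of an integer, which is a contradiction. If $D_i=0$, the discriminant is $(B_i+C_i)^2$, already a perfect square, contradicting the hypothesis; so this case cannot occur. If $x=1$, the equation reduces to $D_i=0$, and this has just been ruled out. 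Hence no periodic tiling exists.

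It remains to double-check the orientation conventions. The labels satisfy $a=$ east, $b=$ north, $c=$ west, $d=$ south, so the sum over a torus indeed telescopes. The assignment of $B$ to $\Tcal_\boxminus$ (horizontal stripe only) and $C$ to $\Tcal_\boxbar$ has to match the counting above. These only permute $x$ and $y$, which is harmless since the argument is symmetric.
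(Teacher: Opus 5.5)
Your proof is correct and follows essentially the same route as the paper. You reduce to a doubly periodic configuration, use the telescoping sum over a fundamental domain (Lemma~\ref{lem:doubly-periodic-densities-satisfy-equation}), take the determinant against $D$ to force $\alpha=\beta$ (Lemma~\ref{lem:det-conditions-implies-alpha-equal-beta-are-quadratic}), and conclude irrationality from the non-square discriminant. The only cosmetic difference is your substitution $t=x/(1-x)$: the degenerate case you must exclude becomes $D_i=0$ rather than the paper's $A_i-B_i-C_i+D_i=0$, and in both versions it is ruled out because the discriminant would then be a perfect square.
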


We observe that the hypotheses of Theorem~\ref{thm:not-periodic} are satisfied
for the Ammann set of 16 Wang tiles presented above
with $(B_1+C_1)^2-4A_1D_1=5$. Thus, it provides a new proof that the Ammann set
of Wang tiles admits no periodic tiling, without using the classical argument
based on unique decomposition property or self-similarity.
Also, Theorem~\ref{thm:not-periodic}
provides a positive answer to Question 2 in \cite{MR4963140}
which was asking if one can directly prove that a Wang shift is non-periodic
using equations satisfied by labels of Wang tiles given by integer vectors 
following the short arithmetical argument for the non-periodicity of Kari's
tile set.

The proof of Theorem~\ref{thm:not-periodic} is short and is done in
Section~\ref{sec:aperiodicityproof}. In Section~\ref{sec:applications},
we apply Theorem~\ref{thm:not-periodic} to obtain new proofs of non-periodicity of
many known sets of Wang tiles including the encoding of Penrose tilings into 24 Wang tiles
and the recent family of metallic mean Wang tiles.
We also propose an algorithm based on linear algebra 
to find the quadrilateral determined by a set of Wang tiles with stripes.
This is done in Section~\ref{sec:the-kernel-method}.

\subsection*{Stripe densities are solutions to a quadratic equation}

\tikzset{blackdot/.style={fill=black,inner sep=0.4mm,circle}}
\tikzset{whitedot/.style={fill=white,draw=black,inner sep=0.4mm,circle,line width=.3mm}}
\tikzset{reddot/.style  ={fill=white,draw=red,inner sep=0.4mm,circle,line width=.3mm}}

\begin{figure}
\begin{center}
\begin{tikzpicture}[scale=.8]
    \clip (-3.7, -4.4) rectangle (10.4,4.4);
    \draw[dotted,thick] (-3,4) -- (6,-5); %
    \draw[dotted,thick] (4 ,5) -- (-6,-5); %
    \draw[dotted,thick] (-11,5) -- (9,-5); %
    \draw[dotted,thick] (1 ,5) -- (1,-5); %
    \draw[fill=blue!20] (1,0)  -- (0,1)  -- (-1,0) -- (1,-1) ;
    \draw[red,very thick,dashed] (-5,-5*1.618) -- node[sloped,pos=.3,above] {$y=\varphi x$} 
                                 (5,5*1.618);
    \draw[red,very thick,dashed] (-5,-5*-.618) -- node[sloped,pos=.15,above] {$y=-\varphi^{-1} x$}
                                 (9,9*-.618);
    =
    \node[whitedot,label={right:$A=\smallvector{1}{0}$}]  (A) at (1,0) {};
    \node[whitedot,label={above:$B=\smallvector{0}{1}$}]  (B) at (0,1) {};
    \node[whitedot,label={below:$C=\smallvector{1}{-1}$}] (C) at (1,-1) {};
    \node[whitedot,label={left:$D=\smallvector{-1}{0}$}]  (D) at (-1,0) {};
    \node[whitedot,label={right:$O$}]   (O) at (0,0) {};
    \draw[draw=blue,very thick] (A) -- (B) -- (D) -- (C) -- (A) -- cycle;
    \node[reddot,label={left:$R$}]    (R) at (.382,.618) {};
    \node[reddot,label={left:$S$}]    (S) at (-.618+.382,-.382) {};
    \node[reddot,label={left:$P$}]    (P) at (1,-.618) {};
    \node[reddot,label={left:$Q$}]    (Q) at (-.618,.382) {};
    \draw[very thick,domain = -5:5, variable = \y]  plot ({.25*\y*\y-.5*\y+1.25},{\y});
    \node[right,align=left] at (3,3.5) {parabola $x=\frac{1}{4}(y-1)^2+1$};
\end{tikzpicture}
\end{center}
    \caption{
    The Ammann set of 16 Wang tiles determines the quadrilateral $ABDC$ with
    $\ABCD{A_1}{A_2}{B_1}{B_2}{C_1}{C_2}{D_1}{D_2}
    =\ABCD{1}{0}{0}{1}{1}{-1}{-1}{0}$.
        The parabola tangent to the four lines supported by the sides of the
        quadrilateral $ABDC$ 
        has equation $x=\frac{1}{4}(y-1)^2+1$.
        The density of tiles with stripes in a tiling by Ammann tiles
        satisfy a system of Diophantine equations whose solutions
        are represented by the two lines passing through the origin $O$
        tangent to the parabola.
        The slope of these tangent lines are irrational
        from which we deduce the non-periodicity of the Ammann set of Wang tiles.
        One may interact with this figure at 
        \url{https://www.geogebra.org/m/a4vmxgft}.
    }
    \label{fig:the-parabola-for-Ammann-case}
\end{figure}

The proof of Theorem~\ref{thm:not-periodic} is based on an elegant equation
satisfied by the densities of vertical and horizontal stripes.
Suppose that the vertical stripes (\emph{i.e.}, tiles in $\Tcal_\boxbar\cup
\Tcal_\boxplus$) have density $\alpha$, and that the horizontal stripes (tiles in
$\Tcal_\boxminus\cup \Tcal_\boxplus$) have density $\beta$. 
We show in Lemma~\ref{lem:doubly-periodic-densities-satisfy-equation} 
for every doubly-periodic configuration
(and in Proposition~\ref{prop:sufficient-condition-quadratic-equation}
for all configurations),
that the densities satisfy the following equation:
\begin{equation}
 \label{alpha_beta_relations}
    \left(\begin{smallmatrix} 0\\0 \end{smallmatrix}\right)=
     (1-\alpha)\left((1-\beta)A 
                    +\beta    B \right)
        +\alpha\left((1-\beta)C 
                    +\beta    D \right).
\end{equation}
Therefore, if the quadrilateral $ABDC$ determined by a set of Wang tiles
$\Tcal$ is such that the solutions to Equation~\eqref{alpha_beta_relations}
are quadratic irrationals, we conclude that the set $\Tcal$ admits no periodic
tiling.
This is how Theorem~\ref{thm:not-periodic} is proved.

The solutions to Equation~\eqref{alpha_beta_relations}
are studied in detail in Section~\ref{sec:solution-to-alpha-beta-equation}.
Assuming that the quadrilateral $ABDC$ is in a generic position,
where consecutive and opposite sides are not parallel,
it is known that there exists a unique parabola tangent
to the four lines defined by the sides of the quadrilateral $ABDC$
\cite{zbMATH02521565}.
The right hand side of Equation~\eqref{alpha_beta_relations}
describes a set of tangent lines to this parabola.
Thus, a solution $(\alpha,\beta)$ corresponds to a tangent line
passing through the origin $O$.
For example, for the quadrilateral $ABDC$
determined above by the Ammann set of Wang tiles,
we find that the parabola tangent to the four lines supported by the sides
of the quadrilateral has equation $x=\frac{1}{4}(y-1)^2+1$,
see Figure~\ref{fig:the-parabola-for-Ammann-case}.
One of the two tangent lines passing through the origin 
intersects the segments $AB$ and $CD$ of the quadrilateral in the points
$R=(1-\beta)A +\beta B$ and
$S=(1-\beta)C +\beta D$ for some $\beta\in\R$.
The other tangent line to the parabola passing through the origin 
intersects the segments $AC$ and $BD$ of the quadrilateral
in the points
$P=(1-\alpha)A +\alpha C$ and
$Q=(1-\alpha)B +\alpha D$ for some $\alpha\in\R$.
We may recover the value of $\alpha$ and $\beta$ from this geometric construction
because the points $P$, $Q$, $R$ and $S$ satisfy
\begin{equation}\label{eq:alpha_beta_relations_with_points_PQRS}
\smallvector{0}{0}
=O
=(1-\beta)P+\beta Q
=(1-\alpha)R +\alpha S
\end{equation}
when $\alpha=\beta=\varphi^{-1}$
where $\varphi$ is the golden ratio.
We observe that Equation~\eqref{eq:alpha_beta_relations_with_points_PQRS}
is equivalent to Equation~\eqref{alpha_beta_relations}.

The solutions to the equation for every quadrilateral $ABDC$
are described very precisely in
Proposition~\ref{prop:alpha_beta_solutions}.
Also, in Lemma~\ref{lem:a-solution-for-every-pair-of-quadratic-numbers},
we show that for every pair $(\alpha, \beta)$ of elements in the
same quadratic number field, there exist four points $A,B,C,D \in \Z^2$ such
that $(\alpha,\beta)$ is a solution to Equation~\eqref{alpha_beta_relations}. 

\subsection*{A set of Wang tiles for every pair of quadratic numbers}

Many aperiodic sets of tiles are related to the golden ratio:
Penrose tiles \cite{penrose_role_1974},
Ammann tiles \cite{MR857454},
Jeandel-Rao tiles \cite{zbMATH07421483} and
the Hat tile \cite{MR4770585}, just to list a few.
Aperiodic sets of tiles exists whose structure is explained by other algebraic numbers
\cite{MR3791847} or even non-alebraic \cite{MR4822285} but examples remain sparse.
Recently, a new family of aperiodic Wang tiles was introduced whose tilings are non-periodic
and associated with metallic means \cite{MR4944989,MR4963140}. 
For every integer $n\geq1$, the $n$-th metallic mean is the positive root
$\beta$ of the polynomial $x^2-nx-1$.
Tilings with sets of Wang tiles from this family have horizontal and vertical stripes
and the density of these stripes are equal to the inverse of $\beta$.

\begin{question}\label{question:for-which-stripe-densities}
For which vertical and horizontal stripe densities can we construct an
    \emph{as simple as possible} set of Wang tiles with stripes whose tilings
    achieve these densities?
\end{question}

Other questions like these were listed in the last section of \cite{MR4963140}.
Note that there are partial answers to that question.
Indeed, for every two-dimensional (non-uniform) rectangular substitution, 
we know there exists a general construction of a set of Wang tiles 
whose set of valid configurations are representatives of configurations
obtained by the substitution rule \cite{MR1014984}.
Considering Mozes' construction for direct products of two distinct
one-dimensional substitutions already give lots of examples.
But, the construction proposed by Mozes is complicated as there is a lot of
information stored in each Wang tile. As a consequence, 
the construction is very hardly used in practice. 
Here, as in the family of metallic mean Wang tiles, we are interested in
solutions which are as simple as possible.

In this article, we provide a simple constructive answer to
Question~\ref{question:for-which-stripe-densities} for algebraic numbers of
degree 2.
Namely, for every pair $(\alpha,\beta)\in[0,1]^2$ of elements in the same quadratic number field
such that both are rational or irrational,
we construct a finite set of Wang tiles with stripes whose 
density of vertical stripes is $\alpha$ and 
density of horizontal stripes is $\beta$.

\newcommand\StatementMainTHEOREMII{
    Let $(\alpha,\beta)\in[0,1]^2$
    be a pair of elements of a quadratic number field $K$
    such that
    $\alpha$ and $\beta$ are both irrational or both rational.
    There exists a finite set 
    $\Tcal = \Tcal_\square \cup \Tcal_\boxbar \cup \Tcal_\boxminus \cup \Tcal_\boxplus$
    of Wang tiles with vertical and horizontal stripes 
    and there exist four non-collinear points $A,B,C,D \in \Z^2$
    such that
\begin{enumerate}
    \item[(i)] $(\alpha,\beta)$ is a solution to Equation~\eqref{alpha_beta_relations}, 

    \item[(ii)] $\Tcal$ determines the quadrilateral $ABDC$
        whose opposite sides are not parallel,

    \item[(iii)] ${\cal T}$ admits a tiling $w:\Z^2\longrightarrow {\cal T}$
        where vertical stripes (\emph{i.e.}, tiles in $\Tcal_\boxbar\cup
        \Tcal_\boxplus$) have density $\alpha$, and the horizontal stripes
        (tiles in $\Tcal_\boxminus\cup \Tcal_\boxplus$) have density $\beta$,
        and

    \item[(iv)] in every valid tiling by $\Tcal$, the horizontal and the
        vertical densities of stripes exist and are solutions of
        Equation~\eqref{alpha_beta_relations}.

\end{enumerate}
In particular, if $\alpha$ and $\beta$ are irrational, then the tile set $\Tcal$ is aperiodic.}

\begin{maintheorem}\label{mainthm:an-aperiodic-tileset-for-every-quadratic-number}
    \StatementMainTHEOREMII
\end{maintheorem}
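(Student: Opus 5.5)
The plan is to construct $\Tcal$ directly from a pair of Sturmian (mechanical) words, using the earlier results of the paper as black boxes. First, Lemma~\ref{lem:a-solution-for-every-pair-of-quadratic-numbers} gives integer points $A,B,C,D\in\Z^2$ such that $(\alpha,\beta)$ solves Equation~\eqref{alpha_beta_relations}; this settles (i). I will check, and if needed adjust by an integer change of coordinates or by rescaling, that these points are non-collinear and that the opposite sides of $ABDC$ are not parallel. In the rational case, I will instead pick a generic integer quadrilateral having $(\alpha,\beta)$ as a rational solution.

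The tiles are modelled on the balanced-word (Kari–Culik) style encoding. Consider the mechanical words $x_n=\lfloor (n+1)\alpha+\rho\rfloor-\lfloor n\alpha+\rho\rfloor$, which mark the vertical stripe columns, and $y_m$ defined analogously with $\beta$ and a shift $\rho'$, which mark the stripe rows. At each edge I want an integer vector label $\vv$ or $\hh$ recording a "carry". Concretely, set $F(n,m)=\sum_{i<n}\sum_{j<m}\big(\text{the vertex among }A,B,C,D\text{ selected by }(x_i,y_j)\big)$. Equation~\eqref{alpha_beta_relations} says the bilinear average of the vertex selection vanishes. Because $x,y$ are Sturmian with bounded discrepancy, $F$ can be written as $F(n,m)=g(n,m)+\lfloor\cdot\rfloor$-type corrections that stay bounded.

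The labels are then the bounded discrete derivatives $\vv(n,m)=F$-increments in one direction and $\hh(n,m)$ in the other. Telescoping gives $\vv(a)+\hh(b)-\vv(c)-\hh(d)$ equal to the selected vertex, so $\Tcal$ determines $ABDC$, which gives (ii). Because only finitely many label values occur, $\Tcal$ is finite, and the configuration $w$ built this way is valid with stripe densities $\alpha,\beta$, which gives (iii). The key identity I aim for is the bilinear expansion $(1-x)(1-y)A+(1-x)yB+x(1-y)C+xyD$ with $x,y\in\{0,1\}$. Summing it over a box equals $N_0M_0A+\dots$, and the counts $N_0=n-\#\text{stripes}$ are $n(1-\alpha)+O(1)$. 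So $F(n,m)=nm\cdot 0+O(n+m)$ by (i).

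Bounding the linear error term requires care: I must write $F(n,m)=\Phi(n)+\Psi(m)+E(n,m)$ with $E$ bounded. I would use the product structure: $F=N_0(n)\,[M_0A+M_1B]+N_1(n)\,[M_0C+M_1D]$. Expanding with $N_1(n)=\alpha n+\epsilon_n$ and $M_1(m)=\beta m+\delta_m$, where $\epsilon,\delta$ are bounded, the $nm$ term vanishes by (i). The remaining terms are $n\cdot(\text{linear in }\delta_m)$, $m\cdot(\text{linear in }\epsilon_n)$ and a bounded term. The products like $n\delta_m$ are not bounded, so the labels must be taken as pairs $(\epsilon_n,\delta_m)$ together with increments, not $F$ itself. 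Choosing labels $\vv=$ a linear function of $(\epsilon_n,x_n)$ and $\delta_m$, the unbounded parts cancel in the tile equation precisely because of (i). Making this cancellation work with integer labels is the main obstacle. The integrality of $A,\dots,D$ and the rationality structure of the quadratic field should allow it, since $\epsilon_n,\delta_m$ live in finite sets of values of the form $\{k\alpha+\rho\}$; I would try first to encode the labels as these fractional parts, with integer vectors obtained via linear maps.

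Finally, (iv) follows from Proposition~\ref{prop:sufficient-condition-quadratic-equation}: any valid tiling has stripe densities satisfying Equation~\eqref{alpha_beta_relations}, with existence of the densities coming from the same argument. If $\alpha,\beta$ are irrational, Proposition~\ref{prop:alpha_beta_solutions} shows that all solutions are irrational, since the solutions are conjugate quadratic irrationals. Hence no tiling is periodic, and together with (iii) the tile set $\Tcal$ is aperiodic.
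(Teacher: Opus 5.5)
Your treatment of (i) and (iv) is sound and follows the paper. Lemma~\ref{lem:a-solution-for-every-pair-of-quadratic-numbers} already yields non-collinear integer points whose quadrilateral has non-parallel opposite sides when $\alpha,\beta$ are both rational or both irrational. No adjustment is needed, and an integer change of coordinates or a rescaling, being linear, could not repair parallel sides anyway. Non-parallel sides are exactly what gives, via Proposition~\ref{prop:alpha_beta_solutions}, the finitely-many-solutions hypothesis of Proposition~\ref{prop:sufficient-condition-quadratic-equation}.

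The genuine gap is in (ii)--(iii), which is the heart of the theorem (the paper's Theorem~\ref{thm:balancetheorem}): you never exhibit a finite set of labels. You correctly find that $F(n,m)$ contains the unbounded terms $n\delta_m\bigl((1-\alpha)(B-A)+\alpha(D-C)\bigr)$ and $m\epsilon_n\bigl((1-\beta)(C-A)+\beta(D-B)\bigr)$, so increments of $F$ cannot serve as labels. Your remedy, labels that are functions of $\epsilon_n,\delta_m$ encoded through fractional parts, fails. For irrational $\alpha$ one has $\epsilon_n=\{\rho\}-\{n\alpha+\rho\}$ (fractional parts), which takes infinitely many distinct values, dense in an interval. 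Such labels therefore produce an infinite tile set. Your assertion that $\epsilon_n,\delta_m$ range over finite sets is false, and the step you call ``the main obstacle'' is left open.

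The missing idea is to decouple the two coordinates and to store on edges only bounded integer \emph{digits} of real numbers, never the reals themselves. For the first coordinate, the paper proceeds as follows.
\begin{itemize}
\item The horizontal edges of row $j$ carry the balanced digits $\lfloor (i+1)y_j\rfloor-\lfloor iy_j\rfloor$ of a real number $y_j$.
\item The increment $y_{j+1}-y_j$ equals $\sigma_1=(1-\alpha)B_1+\alpha D_1$ on stripe rows and $\tau_1=(1-\alpha)A_1+\alpha C_1$ on the other rows.
\item Equation~\eqref{alpha_beta_relations} gives $\beta\sigma_1+(1-\beta)\tau_1=0$. Hence $y_j=y_0+(\tau_1-\sigma_1)\{j\beta\}$ stays bounded and only finitely many digits occur.
\item The vertical edges carry integer carries $c_{i,j}$, obtained by telescoping the tile equation along the row.
\item These carries are bounded because $\lfloor iy_{j+1}\rfloor-\lfloor iy_j\rfloor\approx i\sigma_1$ (or $i\tau_1$) cancels the linear growth of the row sums of the first coordinates of the selected vertices. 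This is precisely how your term $n\delta_m(\cdots)$ gets absorbed.
\end{itemize}
The second coordinate is handled symmetrically, column by column, with digits on vertical edges and carries on horizontal edges. You need this Kari-style ``add a constant'' machine, or an equally explicit finite construction, to establish (ii) and (iii). Without it there is no tile set to which (iv) applies.
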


Theorem~\ref{mainthm:an-aperiodic-tileset-for-every-quadratic-number} is proved in 
Section~\ref{sec:new-constructions}
based on results proved in 
Section~\ref{sec:solution-to-alpha-beta-equation}
and Section~\ref{sec:stripes-densities-satisfy-equation}.

\subsection*{Structure of the article}

In Section~\ref{sec:preliminaries}, we recall the notion
of Wang tiles and present some specific notation used in this article.
In Section~\ref{sec:aperiodicityproof}, we prove
Theorem~\ref{thm:not-periodic}, that is, we present a short argument for
concluding the non-periodicity of a set of Wang tiles with stripes provided
some sufficient condition is satisfied.
In Section~\ref{sec:applications},
we apply the sufficient condition to propose 
new proofs of non-periodicity of known sets of Wang tiles, including
an encoding of Penrose tilings into 24 Wang tiles and the family of metallic mean Wang tiles.
In Section~\ref{sec:the-kernel-method}, we present a method based on linear
algebra to compute a quadrilateral determined by a set of Wang tiles with
vertical and horizontal stripes if one exists.
We use this method to give new proofs of non-periodicity for three additional
known aperiodic sets of Wang tiles.
In Section~\ref{sec:solution-to-alpha-beta-equation},
we present a complete description of the set of solutions to 
Equation~\eqref{alpha_beta_relations}.
In Section~\ref{sec:stripes-densities-satisfy-equation},
we show that densities of stripes in valid
configurations over a set of tiles determining a quadrilateral $ABDC$ must
exist and be a solution to Equation~\eqref{alpha_beta_relations}.
In Section~\ref{sec:new-constructions}, we prove
Theorem~\ref{mainthm:an-aperiodic-tileset-for-every-quadratic-number}. We conclude with some open questions in Section~\ref{sec:open}.

\section{Preliminaries}
\label{sec:preliminaries}

\subsection{Topological dynamical systems}

Most of the notions introduced here can be found in \cite{MR648108}.
A \emph{dynamical system} is
a triple $(X,G,T)$, where $X$ is a topological space, $G$ is a topological
group and $T$ is a continuous function $G\times X\to X$ defining a left action
of $G$ on $X$:
if $x\in X$, $e$ is the identity element of $G$ and $g,h\in G$, then using
additive notation for the operation in $G$ we have $T(e,x)=x$
and $T(g+h,x)=T(g,T(h,x))$.
In other words, if one denotes the transformation $x\mapsto T(g,x)$
by $T^g$, then $T^{g+h}=T^g T^h$.
In this work, we consider the Abelian group $G=\Z\times\Z$.

If $Y\subset X$, let $\overline{Y}$ denote the topological closure of $Y$ and
let $\overline{Y}^T:=\cup_{g\in G}T^g(Y)$ denote the $T$-closure of $Y$.
A subset $Y\subset X$ is \emph{$T$-invariant} if $\overline{Y}^T=Y$.
A dynamical system $(X,G,T)$ is called \emph{minimal} if $X$ does
not contain any nonempty, proper, closed $T$-invariant subset.
The left action of $G$ on $X$ is \emph{free}
if $g=e$ whenever there exists $x\in X$ such that $T^g(x)=x$.

Let $(X,G,T)$ and $(Y,G,S)$ be two dynamical systems with
the same topological group $G$.
A \emph{homomorphism} $\theta:(X,G,T)\to(Y,G,S)$ is a continuous
function $\theta:X\to Y$ satisfying the commuting property
that $S^g\circ\theta=\theta\circ T^g$ for every $g\in G$.
A homomorphism $\theta:(X,G,T)\to(Y,G,S)$ is called an \emph{embedding}
if it is one-to-one, a \emph{factor map} if it is onto, and a \emph{topological
conjugacy} if it is both one-to-one and onto and its inverse map is continuous.
If $\theta:(X,G,T)\to(Y,G,S)$ is a factor map,
then $(Y,G,S)$ is called a \emph{factor} of $(X,G,T)$
and $(X,G,T)$ is called an \emph{extension} of $(Y,G,S)$.
Two dynamical systems are \emph{topologically conjugate} if there is a
topological conjugacy between them.

\subsection{Subshifts and shifts of finite type}\label{sec:subshift-SFT}

In this section, we introduce multidimensional subshifts,
a particular type of dynamical system 
\cite[\S 13.10]{MR1369092},
\cite{MR1861953,MR2078846,MR3525488}.
Let $\Acal$ be a finite set, $d\geq 1$, and let $\Acal^{\Z^d}$ be the set of all maps
$x:\Z^d\to\Acal$, equipped with the compact product topology. 
An element $x\in\Acal^{\Z^d}$ is called a \emph{configuration}
and we write it as $x=(x_\bm)=(x_\bm:\bm\in\Z^d)$,
where $x_\bm\in\Acal$ denotes the value of $x$ at $\bm$. 
The topology on $\Acal^{\Z^d}$ is compatible with the metric defined for all
configurations $x,x'\in\Acal^{\Z^d}$ by $\dist(x,x')=2^{-\min\left\{\Vert\bn\Vert\,:\,
x_\bn\neq x'_\bn\right\}}$
where $\Vert\bn\Vert = |n_1| + \dots + |n_d|$.
The \emph{shift action} $\sigma:\bn\mapsto
\sigma^\bn$ of the additive group $\Z^d$ on $\Acal^{\Z^d}$ is defined by
\begin{equation}\label{eq:shift-action}
    (\sigma^\bn(x))_\bm = x_{\bm+\bn}
\end{equation}
for every $x=(x_\bm)\in\Acal^{\Z^d}$ and $\bn\in\Z^d$. 
If $X\subset \Acal^{\Z^d}$,
let $\overline{X}$ denote the topological closure of $X$
and let $\shiftclosure{X}:=\{\sigma^\bn(x)\mid x\in X, \bn\in\Z^d\}$
denote the shift-closure of $X$.
A subset $X\subset
\Acal^{\Z^d}$ is \emph{shift-invariant} if 
$\shiftclosure{X}=X$. A closed, shift-invariant subset
$X\subset\Acal^{\Z^d}$ is a \emph{subshift}. 
If $X\subset\Acal^{\Z^d}$ is a subshift we write
$\sigma=\sigma^X$ for the restriction of the shift action
\eqref{eq:shift-action} to $X$. 
When $X$ is a subshift,
the triple $(X,\Z^d,\sigma)$ is a dynamical system
and the notions presented in the previous section hold.

A configuration $x\in X$ is \emph{periodic} if there is a nonzero vector
$\bn\in\Z^d\setminus\{\zero\}$ such that $x=\sigma^\bn(x)$,
and otherwise it is \emph{nonperiodic}.
We say that a nonempty subshift $X$ is \emph{aperiodic}
if the shift action $\sigma$ on $X$ is free.

For any subset $S\subset\Z^d$ let $\pi_S:\Acal^{\Z^d}\to\Acal^S$ denote the
projection map which restricts every $x\in\Acal^{\Z^d}$ to $S$. 
A \emph{pattern} is a function $p\in\Acal^S$ for some finite subset
$S\subset\Z^d$.
To every pattern $p\in\Acal^S$ corresponds
a subset $\pi_S^{-1}(p)\subset\Acal^{\Z^d}$ called \emph{cylinder}.
A nonempty set $X\subset\Acal^{\Z^d}$ is a
\emph{subshift} if and only if there exists a set $\Fcal$
of \emph{forbidden} patterns such that
\begin{equation}\label{eq:SFT}
    X = \{x\in\Acal^{\Z^d} \mid \pi_S\circ\sigma^\bn(x)\notin\Fcal
    \text{ for every } \bn\in\Z^d \text{ and } S\subset\Z^d\},
\end{equation}
see \cite[Prop.~9.2.4]{MR3525488}.
A subshift $X\subset\Acal^{\Z^d}$ is a 
\emph{subshift of finite type} (SFT) if there exists a finite set $\Fcal$ such that \eqref{eq:SFT} holds.
In this article, we consider subshifts of finite type on $\Z\times\Z$, that is, the case
$d=2$.

\subsection{Wang tiles}
\label{sec:wang-tiles}

A \emph{Wang tile} 
is a tuple of four colors $(a,b,c,d)\in I\times J\times
I\times J$
where $I$
is a finite set of vertical colors
and $J$
is a finite set of horizontal colors; see
\cite{wang_proving_1961,MR0297572}.
A Wang tile is represented as a unit square with colored edges:
\begin{center}
    \raisebox{-9.5mm}{
    \begin{tikzpicture}[auto]
    \tile{white}{0}{0}{a}{b}{c}{d}
    \end{tikzpicture}}
\end{center}
For each Wang tile $\tau=(a,b,c,d)$, let
$\east(\tau)=a$,
$\north(\tau)=b$,
$\west(\tau)=c$,
$\south(\tau)=d$
denote respectively the \emph{colors} or \emph{labels} of the right, top, left
and bottom edges of the tile $\tau$.

Let $\Tcal=\{t_0,\dots,t_{m-1}\}$ be a set of Wang tiles.
A configuration $x:\Z^2\to\{0,\dots,m-1\}$ is \emph{valid} with respect to $\Tcal$ if
it assigns a tile in $\Tcal$ to each position of $\Z^2$, so that contiguous edges
of adjacent tiles have the same color, that is,
\begin{align}
    \east(t_{x(\bn)})&=\west(t_{x(\bn+\be_1)})\label{eq:validwangtiling1}\\
    \north(t_{x(\bn)})&=\south(t_{x(\bn+\be_2)})\label{eq:validwangtiling2}
\end{align}
for every $\bn\in\Z^2$ where $\be_1=(1,0)$ and $\be_2=(0,1)$.

Let $\Omega_\Tcal=\{x\in\{0,\dots,m-1\}^{\Z^2}| x \text{ is a valid configuration with respect to }\Tcal\}$.
Together with the shift action $\sigma$ of $\Z^2$,
the set $\Omega_\Tcal$ is a subshift which we called the \emph{Wang shift} of $\Tcal$. 
The subshift $\Omega_\Tcal$ is of finite type satisfying \eqref{eq:SFT}
since there exists a finite set of
forbidden patterns made of all horizontal and vertical dominoes of two tiles
whose adjacent edge is not of the same color.
To a configuration $x\in\Omega_\Tcal$ corresponds a tiling of the plane $\R^2$ by
the tiles $\Tcal$ where the unit square Wang tile $t_{x(\bn)}$ is placed at position $\bn$ for every
$\bn\in\Z^2$. %

A configuration $x\in\Omega_\Tcal$ is \emph{periodic} if there exists
$\bn\in\Z^2\setminus\{0\}$ such that $x=\sigma^\bn(x)$.
A set $\Tcal$ of Wang tiles is \emph{periodic} if there exists a periodic configuration
$x\in\Omega_\Tcal$. 
Originally, Wang thought that every set $\Tcal$ of Wang tiles is periodic 
as soon as $\Omega_\Tcal$ is nonempty \cite{wang_proving_1961}.
Wang noticed that if this statement were true, 
it would imply the existence of an algorithm 
solving the \emph{domino problem}, that is, taking as input a set of Wang tiles
and returning \textit{yes} or \textit{no} whether there exists a valid
configuration with these tiles. 
Berger, a student of Wang, later proved that the domino problem is undecidable
and he also provided a first example of an aperiodic set of Wang tiles
\cite{MR0216954}.
A set $\Tcal$ of Wang tiles is \emph{aperiodic} if
the Wang shift $\Omega_\Tcal$ is a nonempty aperiodic subshift.

\subsection{Wang tiles with stripes}
\label{sec:wang-tiles-with-stripes}

We say that a set of Wang tiles 
$\Tcal\subset I\times J\times I\times J$
\emph{has horizontal stripes}
if there exists a partition $I=I_0\cup I_1$
of the vertical labels such that
\[
    \Tcal\subset 
    \left( I_0\times J\times I_0\times J \right) \cup
    \left( I_1\times J\times I_1\times J \right).
\]
We say that a set of Wang tiles 
$\Tcal\subset I\times J\times I\times J$
\emph{has vertical stripes}
if there exists a partition 
$J=J_0\cup J_1$
of the horizontal labels
such that
\[
    \Tcal\subset 
    \left( I\times J_0\times I\times J_0 \right) \cup
    \left( I\times J_1\times I\times J_1 \right).
\]
When drawing the Wang tiles,
we associate the subset $I_1$ with horizontal stripes
and the subset $J_1$ with vertical stripes,
whereas the subsets $I_0$ and $J_0$ are drawn with no stripe.
If a set of Wang tiles $\Tcal$ has horizontal and vertical stripes
with partitions $I=I_0\cup I_1$ and $J=J_0\cup J_1$,
then we define the following subsets
\begin{align*}
    \Tcal_\square   &= \Tcal\cap \left( I_0\times J_0\times I_0\times J_0 \right),\\
    \Tcal_\boxbar   &= \Tcal\cap \left( I_0\times J_1\times I_0\times J_1 \right),\\
    \Tcal_\boxminus &= \Tcal\cap \left( I_1\times J_0\times I_1\times J_0 \right),\\
    \Tcal_\boxplus  &= \Tcal\cap \left( I_1\times J_1\times I_1\times J_1 \right).
\end{align*}
which form a partition of
$\Tcal = \Tcal_\square \cup \Tcal_\boxbar \cup \Tcal_\boxminus \cup \Tcal_\boxplus$.

\section{A sufficient condition for non-periodicity}
\label{sec:aperiodicityproof}

In this article, we construct a relation between sets of Wang tiles
and planar Euclidean geometry. Under some condition, we can associate 
a quadrilateral in the plane with a set of Wang tiles with stripes.
The relation is given precisely by the following definition
which is used in the statement of most of the results in this article.

\begin{definition}[when a set of Wang tiles with stripes determines a quadrilateral]
    \label{def:determines-a-quadrilateral}
Let $\Tcal\subset I\times J\times I\times J$ be a set of Wang tiles
having horizontal and vertical stripes with partition
$ \Tcal = \Tcal_\square \cup \Tcal_\boxbar \cup \Tcal_\boxminus \cup \Tcal_\boxplus$.
Suppose the labels are encoded into vectors
using two maps $\vv:I\to\R^2$ and $\hh:J\to\R^2$.
If there exist $A,B,C,D\in\R^2$ such that
\begin{equation}
\label{eq:fourbulletpoints}
    \vv(a) + \hh(b) - \vv(c) - \hh(d)
        =
\begin{cases}
    A & \text{ if }\tau\in\Tcal_\square,\\
    B & \text{ if }\tau\in\Tcal_\boxminus,\\
    C & \text{ if }\tau\in\Tcal_\boxbar,\\
    D & \text{ if }\tau \in\Tcal_\boxplus,
\end{cases}
\end{equation}
for every tile 
$\tau=
    \begin{tikzpicture}[scale=.8,auto,baseline=0.30cm]
    \tilelabelinside{white}{0}{0}{a}{b}{c}{d}
    \end{tikzpicture}
\in\Tcal$,
    then we say that the set $\Tcal$ 
    (and the functions $\vv$ and $\hh$)
    \emph{determines the quadrilateral $ABDC$}.
\end{definition}

Note that the quadrilateral can be degenerate as vertices may coincide or three
or more of them can be collinear.
When a set of Wang tiles with stripes determines a quadrilateral, then
we can show that the densities of vertical and horizontal stripes
must satisfy a system of equations related to the vertices of the quadrilateral.

\begin{lemma}\label{lem:doubly-periodic-densities-satisfy-equation}
Let $\Tcal\subset I\times J\times I\times J$ be a set of Wang tiles
having horizontal and vertical stripes with partition
$ \Tcal = \Tcal_\square \cup \Tcal_\boxbar \cup \Tcal_\boxminus \cup \Tcal_\boxplus$.
Suppose that $\Tcal$ determines the quadrilateral $ABDC$ for some $A,B,C,D\in\R^2$.
If there exists a doubly-periodic valid configuration $w:\Z^2\to\Tcal$ where
vertical stripes (\emph{i.e.}, tiles in $\Tcal_\boxbar\cup
\Tcal_\boxplus$) have density $\alpha$, and the horizontal stripes (tiles in
$\Tcal_\boxminus\cup \Tcal_\boxplus$) have density $\beta$, then
the densities satisfy Equation~\eqref{alpha_beta_relations}:
\[
    \left(\begin{smallmatrix} 0\\0 \end{smallmatrix}\right)=
     (1-\alpha)\left((1-\beta)A 
                    +\beta    B \right)
        +\alpha\left((1-\beta)C 
                    +\beta    D \right).
\]
\end{lemma}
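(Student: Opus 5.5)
Take a doubly-periodic valid configuration $w$ with periods $p\be_1$ and $q\be_2$ ($p,q\geq1$), and work on the fundamental domain $R=\{0,\dots,p-1\}\times\{0,\dots,q-1\}$. Sum the defining identity \eqref{eq:fourbulletpoints} over all positions of $R$. The left-hand side is a telescoping sum, so it vanishes. The right-hand side counts tiles of each of the four types, and this count can be expressed through row and column stripe data.

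\emph{Step 1: the left-hand side vanishes.} Write $\tau_\bn=w(\bn)$ and sum $\vv(\east(\tau_\bn))+\hh(\north(\tau_\bn))-\vv(\west(\tau_\bn))-\hh(\south(\tau_\bn))$ over $\bn\in R$.
\begin{itemize}
\item Along each row, \eqref{eq:validwangtiling1} gives $\east(\tau_{(x,y)})=\west(\tau_{(x+1,y)})$. The $\vv$-terms therefore telescope to $\vv(\east(\tau_{(p-1,y)}))-\vv(\west(\tau_{(0,y)}))$.
\item By periodicity, $\east(\tau_{(p-1,y)})=\west(\tau_{(p,y)})=\west(\tau_{(0,y)})$, so this difference is $0$.
\item The same argument applies to the $\hh$-terms along columns, using \eqref{eq:validwangtiling2} and period $q\be_2$.
\end{itemize}
Hence the total sum is $\left(\begin{smallmatrix}0\\0\end{smallmatrix}\right)$.

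\emph{Step 2: rewrite the right-hand side.} The right-hand side equals $N_\square A+N_\boxminus B+N_\boxbar C+N_\boxplus D$, where the $N$'s count tiles of each class in $R$.

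The stripe structure makes stripe status constant along lines:
\begin{itemize}
\item Within a row, the west/east labels chain together and lie in a single class of the partition $I=I_0\cup I_1$. So each row is entirely horizontally striped or entirely unstriped.
\item Likewise, each column is entirely vertically striped or entirely unstriped, according to $J=J_0\cup J_1$.
\end{itemize}
Let $k$ be the number of striped columns among the $p$ columns and $\ell$ the number of striped rows among the $q$ rows. A tile lies in $\Tcal_\boxplus$ iff both its row and its column are striped, so $N_\boxplus=k\ell$. Similarly $N_\boxbar=k(q-\ell)$, $N_\boxminus=(p-k)\ell$ and $N_\square=(p-k)(q-\ell)$.

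\emph{Step 3: identify the densities.} The density of vertical stripes is $\alpha=(N_\boxbar+N_\boxplus)/(pq)=k/p$, and the density of horizontal stripes is $\beta=\ell/q$. The densities exist because $w$ is periodic, and they are computed on $R$.

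\emph{Conclusion.} Divide the identity from Steps 1--2 by $pq$:
\[
(1-\alpha)(1-\beta)A+(1-\alpha)\beta B+\alpha(1-\beta)C+\alpha\beta D=\left(\begin{smallmatrix}0\\0\end{smallmatrix}\right).
\]
Regrouping the terms gives exactly Equation~\eqref{alpha_beta_relations}.

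The only step needing care is the product structure of the counts in Step 2, namely that stripe status is constant along whole rows and columns. This follows directly from the definition of stripes together with the matching rules.
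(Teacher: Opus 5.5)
Your proof is correct and follows essentially the same route as the paper: you sum the tile identity over a fundamental domain, the sum telescopes to zero by periodicity, and the class counts factor as products because stripe status is constant along rows and columns. The only difference is cosmetic: you work on a $p\times q$ rectangle with periods $p\be_1$ and $q\be_2$, whereas the paper reduces (citing Baake--Grimm) to a $K\times K$ square; in either case, axis-parallel periods exist because the period lattice has finite index in $\Z^2$.
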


\begin{proof}
    Suppose that
    there exists a valid configuration
    $w:\Z^2\to\Tcal$ over the set of Wang tiles $\Tcal$
    which has two linearly independent periods.
    Following the paragraph that follows Proposition~5.9 in \cite{MR3136260},
    we may suppose that the periods are horizontal
    and vertical and of the same length $K\in\N\setminus\{0\}$.

    Let $S_K=\{(i,j)\in\Z^2\colon0\leq i,j< K\}$.
    Since tiles in the same row either all have a horizontal stripe
    or none of them have one (and similarly with columns and vertical stripes),
    the density of tiles belonging to each subset of the partition 
    $ \Tcal = \Tcal_\square \cup \Tcal_\boxbar \cup \Tcal_\boxminus \cup \Tcal_\boxplus$
    is given by the following products:
    \begin{align*}
        \frac{\#\left(w^{-1}(\Tcal_\boxminus)\cap S_K\right)}{\# S_K}
                         &=(1-\alpha)\beta,
        &
        \frac{\#\left(w^{-1}(\Tcal_\square)\cap S_K\right)}{\# S_K}
        &=(1-\alpha)(1-\beta),\\
        \frac{\#\left(w^{-1}(\Tcal_\boxplus)\cap S_K\right)}{\# S_K}
        &=\alpha\beta,    
        &\frac{\#\left(w^{-1}(\Tcal_\boxbar)\cap S_K\right)}{\# S_K}
        &=\alpha(1-\beta).
    \end{align*}
    On the one hand, we have
    \begin{equation}\label{eq:first-equation-in-proof-KxK_domain}
    \begin{aligned}
        &\frac{1}{\# S_K}
        \sum_{n\in S_K}
        \vv(\east(w_{n})) + \hh(\north(w_n)) - \vv(\west(w_n)) - \hh(\south(w_n))
        \\
    &\qquad=
    \frac{1}{\# S_K}
    \Big(
        \left(\begin{smallmatrix}
        A_1\\A_2
        \end{smallmatrix}\right)
        \cdot \#\{n\in S_K\colon w_n\in\Tcal_\square\}  
        +
        \left(\begin{smallmatrix}
        B_1\\B_2
        \end{smallmatrix}\right)
        \cdot \#\{n\in S_K\colon w_n\in\Tcal_\boxminus\}  \\
        &\qquad\qquad+
        \left(\begin{smallmatrix}
        C_1\\C_2
        \end{smallmatrix}\right)
        \cdot \#\{n\in S_K\colon w_n\in\Tcal_\boxbar\}
        +
        \left(\begin{smallmatrix}
        D_1\\D_2
        \end{smallmatrix}\right)
        \cdot \#\{n\in S_K\colon w_n\in\Tcal_\boxplus\}\Big)\\
    &\qquad= 
     \left(\begin{smallmatrix} A_1\\A_2 \end{smallmatrix}\right)     (1-\alpha)(1-\beta)
    +\left(\begin{smallmatrix} B_1\\B_2 \end{smallmatrix}\right) (1-\alpha)\beta
    +\left(\begin{smallmatrix} C_1\\C_2 \end{smallmatrix}\right) \alpha(1-\beta)
    +\left(\begin{smallmatrix} D_1\\D_2 \end{smallmatrix}\right)     \alpha\beta\\
    &\qquad= 
     (1-\alpha)\left[\left(\begin{smallmatrix} A_1\\A_2 \end{smallmatrix}\right)     (1-\beta)
     +\left(\begin{smallmatrix} B_1\\B_2 \end{smallmatrix}\right) \beta\right]
     +\alpha\left[\left(\begin{smallmatrix} C_1\\C_2 \end{smallmatrix}\right) (1-\beta)
         +\left(\begin{smallmatrix} D_1\\D_2  \end{smallmatrix}\right) \beta\right].
\end{aligned}
\end{equation}

    On the other hand, since $w$ is a valid configuration, the following sum is
    telescoping and simplifies to zero because $K$ is a horizontal and vertical
    period of $w$:
    \begin{equation}\label{eq:sum_is_zero_in_KxK_domain}
    \begin{aligned}
        &\sum_{n\in S_K}
        \vv(\east(w_{n})) + \hh(\north(w_n)) - \vv(\west(w_n)) - \hh(\south(w_n))
        \\
        &\qquad=
        \sum_{0\leq i,j <K}
        \vv(\east(w_{i,j})) - \vv(\west(w_{i,j})) + \hh(\north(w_{i,j})) - \hh(\south(w_{i,j}))\\
        &\qquad=
        \sum_{0\leq i,j <K}
        \vv(\east(w_{i,j})) - \vv(\east(w_{i-1,j})) + \hh(\north(w_{i,j})) - \hh(\north(w_{i,j-1}))\\
        &\qquad=
        \sum_{j=0}^{K-1}
        \sum_{i=0}^{K-1}
        \left(
          \east(w_{i,j})
        - \east(w_{i-1,j})
        \right)
        +
        \sum_{i=0}^{K-1}
        \sum_{j=0}^{K-1}
        \left(
          \north(w_{i,j})
        - \north(w_{i,j-1})
        \right)\\
        &\qquad=
        \sum_{j=0}^{K-1}
        \left(
          \east(w_{K-1,j})
        - \east(w_{0-1,j})
        \right)
        +
        \sum_{i=0}^{K-1}
        \left(
          \north(w_{i,K-1})
        - \north(w_{i,0-1})
        \right)\\
        &\qquad=
        \sum_{j=0}^{K-1} 0
        +
        \sum_{i=0}^{K-1} 0
        =0.
    \end{aligned}
    \end{equation}
Combining \eqref{eq:first-equation-in-proof-KxK_domain} and \eqref{eq:sum_is_zero_in_KxK_domain}, 
we obtain the equation given in \eqref{alpha_beta_relations}.
\end{proof}

Under some conditions on determinants which are satisfied in many cases (in
fact all examples described in Section~\ref{sec:applications} satisfy these
conditions), Equation~\eqref{alpha_beta_relations} has a solution which is easy
to describe.

\begin{lemma}\label{lem:det-conditions-implies-alpha-equal-beta-are-quadratic}
    Suppose that $\alpha,\beta\in\C$ is a solution
    to Equation~\eqref{alpha_beta_relations}
    for some
    $\left(\begin{smallmatrix} A_1\\A_2 \end{smallmatrix}\right),
     \left(\begin{smallmatrix} B_1\\B_2 \end{smallmatrix}\right),
     \left(\begin{smallmatrix} C_1\\C_2 \end{smallmatrix}\right),
     \left(\begin{smallmatrix} D_1\\D_2 \end{smallmatrix}\right)\in\R^2$.
    If
    \begin{equation}
    \label{eq:determinantcondition}
    \det\left(\begin{smallmatrix} A_1&D_1\\A_2&D_2 \end{smallmatrix}\right)=0
        \quad
        \text{and}
        \quad
    \det\left(\begin{smallmatrix} B_1&D_1\\B_2&D_2 \end{smallmatrix}\right)
    =-\det\left(\begin{smallmatrix} C_1&D_1\\C_2&D_2 \end{smallmatrix}\right)\neq0,
    \end{equation}
    then
    $\alpha=\beta$ and $\alpha$ is a solution of the quadratic equation
    \begin{equation}\label{eq:quadratic-equations-ABCD}
    \begin{aligned}
        0 =
        (A_i-B_i-C_i+D_i)x^2+(B_i+C_i-2A_i)x + A_i
    \end{aligned}
    \end{equation}
whose discriminant is $\Delta=(B_i+C_i)^2-4A_iD_i$
for both $i=1$ and $i=2$.
\end{lemma}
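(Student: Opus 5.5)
The plan is to exploit the bilinearity of the determinant. Write $\det(X,Y)$ for $\det\left(\begin{smallmatrix} X_1&Y_1\\X_2&Y_2 \end{smallmatrix}\right)$, which is bilinear over $\C$. Then apply the linear functional $X\mapsto\det(X,D)$ to both sides of Equation~\eqref{alpha_beta_relations}, expanded as
\[
0=(1-\alpha)(1-\beta)A+(1-\alpha)\beta B+\alpha(1-\beta)C+\alpha\beta D .
\]
Since $\det(D,D)=0$ trivially and $\det(A,D)=0$ by hypothesis, the $A$- and $D$-terms vanish. We are left with
\[
0=(1-\alpha)\beta\det(B,D)+\alpha(1-\beta)\det(C,D).
\]
Using $\det(C,D)=-\det(B,D)$, this becomes
\[
0=\det(B,D)\bigl((1-\alpha)\beta-\alpha(1-\beta)\bigr)=\det(B,D)(\beta-\alpha).
\]
Because $\det(B,D)\neq0$, we conclude $\alpha=\beta$.

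Next, substitute $\beta=\alpha=x$ back into Equation~\eqref{alpha_beta_relations} and read off each coordinate $i\in\{1,2\}$ separately:
\[
0=(1-x)^2A_i+x(1-x)(B_i+C_i)+x^2D_i .
\]
Expanding and collecting powers of $x$ gives exactly $(A_i-B_i-C_i+D_i)x^2+(B_i+C_i-2A_i)x+A_i=0$, which is \eqref{eq:quadratic-equations-ABCD}. Finally, the discriminant is a direct computation:
\[
(B_i+C_i-2A_i)^2-4A_i(A_i-B_i-C_i+D_i).
\]
The cross terms $\mp4A_i(B_i+C_i)$ and the terms $\pm4A_i^2$ cancel, leaving $(B_i+C_i)^2-4A_iD_i$.

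There is no real obstacle here. The only idea is to pair the equation with $D$ via the determinant, so that the hypotheses kill the $A$ and $D$ contributions and force $\alpha=\beta$. Everything after that is routine algebra. One small point to remark on is that the argument never divides by the leading coefficient. So if $A_i-B_i-C_i+D_i=0$ for some $i$, the statement still holds in the literal sense that $\alpha$ satisfies the (possibly degenerate) polynomial equation.
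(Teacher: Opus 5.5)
Your proof is correct and is essentially the paper's argument: multiplying Equation~\eqref{alpha_beta_relations} by $\left(\begin{smallmatrix} D_2&0\\0&D_1 \end{smallmatrix}\right)$ and subtracting the two rows, as the paper does, amounts to applying your functional $X\mapsto\det(X,D)$. Both proofs then use $\det(C,D)=-\det(B,D)\neq0$ to force $\alpha=\beta$ and substitute back coordinatewise. Your simplification $(1-\alpha)\beta-\alpha(1-\beta)=\beta-\alpha$ is written correctly, whereas the paper's intermediate display has a sign slip, and your explicit discriminant computation fills in a step the paper leaves unstated.
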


\begin{proof}
Multiplying Equation~\eqref{alpha_beta_relations} by
the matrix
$\left(\begin{smallmatrix} D_2&0\\0&D_1 \end{smallmatrix}\right)$
we obtain the following two equations:
\[
    \begin{cases}
        0=
        (1-\alpha)\left(D_2A_1(1-\beta) +D_2B_1 \beta\right)
        +\alpha   \left(D_2C_1(1-\beta) +D_2D_1 \beta\right),\\
        0=
        (1-\alpha)\left(D_1A_2(1-\beta)  +D_1B_2 \beta\right)
        +\alpha   \left(D_1C_2 (1-\beta) +D_1D_2   \beta\right).
    \end{cases}
\]
Since
$\det\left(\begin{smallmatrix} A_1&D_1\\A_2&D_2 \end{smallmatrix}\right)=0$,
the difference of the two equations yields
    \begin{align*}
        0
        &= 
        (D_2B_1-D_1B_2)(1-\alpha)\beta
        +
        (D_2C_1-D_1C_2)\alpha(1-\beta) \\
        &= 
        \det\left(\begin{smallmatrix} B_1&D_1\\B_2&D_2 \end{smallmatrix}\right)
        (1-\alpha)\beta
        +
        \det\left(\begin{smallmatrix} C_1&D_1\\C_2&D_2 \end{smallmatrix}\right)
        \alpha(1-\beta) \\
        &= 
        \det\left(\begin{smallmatrix} B_1&D_1\\B_2&D_2 \end{smallmatrix}\right)
        \left( \beta-\alpha\beta - \alpha-\alpha\beta\right)
        = 
        \det\left(\begin{smallmatrix} B_1&D_1\\B_2&D_2 \end{smallmatrix}\right)
        ( \beta - \alpha)
    \end{align*}
        Since $\det\left(\begin{smallmatrix} B_1&D_1\\B_2&D_2 \end{smallmatrix}\right)\neq0$,
    we must have equality of the densities, that is $\alpha=\beta$, and
    \begin{align*}
        0 &=
     A_i (1-\alpha)^2
    +(B_i+C_i) (1-\alpha)\alpha
        +D_i \alpha^2,\\
        &=
        (A_i-B_i-C_i+D_i)\alpha^2+(B_i+C_i-2A_i)\alpha + A_i
    \end{align*}
whose discriminant is $\Delta=(B_i+C_i)^2-4A_iD_i$
for both $i=1$ and $i=2$.
\end{proof}

The solutions to Equation~\eqref{alpha_beta_relations}
can be interpreted geometrically as the lines passing through the origin and
tangent to a parabola (see Figure~\ref{fig:the-parabola-for-Ammann-case}).
The complete description of the solutions for all cases
is postponed to Section~\ref{sec:solution-to-alpha-beta-equation}.
However, Lemma~\ref{lem:det-conditions-implies-alpha-equal-beta-are-quadratic}
is already sufficient to conclude nonperiodicity of many sets of Wang tiles.

\begin{THEOREMI}
    \StatementMainTHEOREMI
\end{THEOREMI}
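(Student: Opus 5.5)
We argue by contradiction, combining Lemma~\ref{lem:doubly-periodic-densities-satisfy-equation} with Lemma~\ref{lem:det-conditions-implies-alpha-equal-beta-are-quadratic}. Suppose $\Tcal$ admits a periodic valid configuration. The first step is to upgrade this to a doubly-periodic configuration. This is the classical fact (used e.g.\ in \cite{MR3136260}, and going back to Wang/Robinson) that a set of Wang tiles admitting a tiling with one nonzero period also admits a tiling with two linearly independent periods. In a tiling with period $\bn$, the strip orthogonal to $\bn$ reduces to a one-dimensional SFT over a finite alphabet of "periodic blocks". A bi-infinite path in a finite graph can then be replaced by a periodic one, by pigeonhole on repeated states. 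This gives a second independent period. We take $w$ to be such a doubly-periodic configuration.

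Since $w$ is doubly periodic, the densities $\alpha$ of vertical stripes and $\beta$ of horizontal stripes exist and are rational. Indeed, for common horizontal and vertical period $K$, $\alpha$ is the number of striped columns in $S_K$ divided by $K$, and similarly for $\beta$. By Lemma~\ref{lem:doubly-periodic-densities-satisfy-equation}, $(\alpha,\beta)$ satisfies Equation~\eqref{alpha_beta_relations}. The determinant hypotheses are exactly \eqref{eq:determinantcondition}, so Lemma~\ref{lem:det-conditions-implies-alpha-equal-beta-are-quadratic} applies. It gives $\alpha=\beta$, with $\alpha$ a root of
\[
    (A_i-B_i-C_i+D_i)x^2+(B_i+C_i-2A_i)x + A_i = 0
\]
for both $i=1,2$, in particular for the index $i$ given in the hypothesis.

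We now derive the contradiction from rationality. Set $a=A_i-B_i-C_i+D_i$, $b=B_i+C_i-2A_i$, $c=A_i$, all integers. If $a\neq 0$, the rational root theorem (or simply the quadratic formula) says that a rational root forces $b^2-4ac=(B_i+C_i)^2-4A_iD_i$ to be a perfect square. This contradicts the hypothesis.

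The main obstacle is the degenerate case $a=0$, where the equation is linear and could have a rational root. If $a=0$, the discriminant equals $b^2$, which is a perfect square. The hypothesis that $\Delta$ is not a perfect square therefore excludes this case automatically, and the same remark covers $a=b=0$. Hence $\alpha$ is irrational, contradicting rationality, so $\Tcal$ admits no periodic tiling.
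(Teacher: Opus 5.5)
Your proposal is correct and follows the paper's proof essentially step for step. You pass to a doubly-periodic configuration; the paper simply cites \cite[Proposition~5.9]{MR3136260} for this, while you also sketch the strip/pigeonhole argument. You then obtain rational densities, apply Lemma~\ref{lem:doubly-periodic-densities-satisfy-equation} and Lemma~\ref{lem:det-conditions-implies-alpha-equal-beta-are-quadratic}, and conclude irrationality from the non-square discriminant. Your handling of the degenerate case $a=0$ through the identity $b^2-4ac=(B_i+C_i)^2-4A_iD_i$ is the same observation as the paper's remark that $A_i+D_i=B_i+C_i$ forces the discriminant to equal $(A_i-D_i)^2$.
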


\begin{proof}
    By contradiction, assume there exists a 
    valid configuration $w:\Z^2\to\Tcal$ over the set of Wang tiles $\Tcal$
    which is periodic.
    From a standard argument \cite[Proposition~5.9]{MR3136260},
    there exists another valid configuration
    $w':\Z^2\to\Tcal$ over the set of Wang tiles $\Tcal$
    which has two linearly independent periods.
    In particular, the densities of all tiles in the configuration $w'$ exist,
    as well as the density $\alpha$ of tiles with vertical stripes (tiles in
    $\Tcal_\boxbar\cup\Tcal_\boxplus$)
    and the density $\beta$ of tiles with horizontal stripes (tiles in
    $\Tcal_\boxminus\cup\Tcal_\boxplus$) in $w'$.
    Since $w'$ is doubly-periodic, these densities must be rational numbers.
    Thus, $\alpha,\beta\in\Q$.

    From Lemma~\ref{lem:doubly-periodic-densities-satisfy-equation},
    the densities $\alpha$ and $\beta$ satisfy
    Equation~\eqref{alpha_beta_relations}.
    From Lemma~\ref{lem:det-conditions-implies-alpha-equal-beta-are-quadratic},
    we have $\alpha=\beta$ and
    $\alpha$ and $\beta$
    also satisfy the quadratic Equation~\eqref{eq:quadratic-equations-ABCD}
    for both $i\in\{1,2\}$.
    Since there exists $i\in\{1,2\}$ such that
    $(B_i+C_i)^2-4A_iD_i>0$ is not a perfect square,
    then at least one of the two equations is of degree 2 
    (because equality $A_i+D_i= B_i+C_i$ implies that the discriminant is 
    $(A_i-D_i)^2$ which is a perfect square)
    and has no rational solution.
    In particular, $\alpha=\beta\notin\Q$.
    This is a contradiction and we conclude that 
    the set of Wang tiles $\Tcal$ admits no periodic tiling.
\end{proof}

\begin{corollary}\label{cor:Ammann-non-periodic}
    The set $\Tcal_{Ammann}$ 
    \begin{center}
        \includegraphics{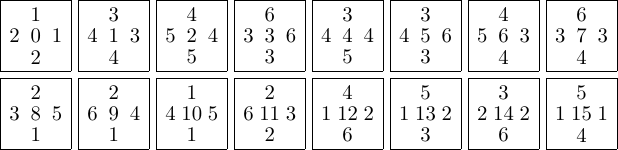}
    \end{center}
    of 16 Wang tiles admits no periodic tiling.
\end{corollary}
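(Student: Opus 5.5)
The corollary follows from Theorem~\ref{thm:not-periodic}, so the plan is to check its hypotheses for the Ammann set using the data from the introduction. I take the stripe partition $\Tcal_{Ammann}=\Tcal_\square\cup\Tcal_\boxbar\cup\Tcal_\boxminus\cup\Tcal_\boxplus$ given there, with $I_0=J_0=\{1,2\}$ and $I_1=J_1=\{3,4,5,6\}$. I also use the two encodings $\vv,\hh:\{1,\dots,6\}\to\Z^2$ listed in the introduction.

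First, I confirm that $\Tcal_{Ammann}$ has horizontal and vertical stripes. For each of the 16 tiles, the east and west labels must lie in the same block of $\{1,2\}\cup\{3,4,5,6\}$, and so must the north and south labels. This is read off directly from the pictures.

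Second, I check the equation of Definition~\ref{def:determines-a-quadrilateral} tile by tile. For every tile $(a,b,c,d)$ I compute $\vv(a)+\hh(b)-\vv(c)-\hh(d)$. It should equal $A=\smallvector{1}{0}$ for the tile in $\Tcal_\square$, $B=\smallvector{0}{1}$ on $\Tcal_\boxminus$, $C=\smallvector{1}{-1}$ on $\Tcal_\boxbar$, and $D=\smallvector{-1}{0}$ on $\Tcal_\boxplus$. For instance, the tile of $\Tcal_\square$ gives $\vv(1)+\hh(1)-\vv(2)-\hh(2)=\smallvector{0}{1}+\smallvector{1}{0}-\smallvector{0}{0}-\smallvector{0}{1}=\smallvector{1}{0}=A$. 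This is a finite routine check of sixteen vector identities. It is the only laborious step, but it poses no conceptual obstacle.

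Third, I verify the determinant and discriminant conditions:
\[
\det\left(\begin{smallmatrix}1&-1\\0&0\end{smallmatrix}\right)=0,\qquad
\det\left(\begin{smallmatrix}0&-1\\1&0\end{smallmatrix}\right)=1,\qquad
\det\left(\begin{smallmatrix}1&-1\\-1&0\end{smallmatrix}\right)=-1 .
\]
So $\det(B,D)=-\det(C,D)=1\neq0$. For $i=1$, we get $(B_1+C_1)^2-4A_1D_1=1+4=5>0$, and $5$ is not a perfect square. All hypotheses of Theorem~\ref{thm:not-periodic} therefore hold, and $\Tcal_{Ammann}$ admits no periodic tiling. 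As a byproduct, Lemma~\ref{lem:det-conditions-implies-alpha-equal-beta-are-quadratic} gives $x^2+x-1=0$ for $i=1$, so $\alpha=\beta=\varphi^{-1}$.
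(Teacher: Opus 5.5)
Your proposal is correct and follows the paper's proof. Both arguments verify that the Ammann set determines the quadrilateral $ABDC$ with $A=\smallvector{1}{0}$, $B=\smallvector{0}{1}$, $C=\smallvector{1}{-1}$, $D=\smallvector{-1}{0}$, check the determinant conditions and the non-square discriminant $5$, and invoke Theorem~\ref{thm:not-periodic}; you additionally spell out the tile-by-tile checks, which the paper simply cites as observed in the introduction.
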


\begin{proof}
    As observed in the introduction,
the Ammann set of Wang tiles determines the quadrilateral $ABDC$ with
$\ABCD{A_1}{A_2}{B_1}{B_2}{C_1}{C_2}{D_1}{D_2}
=\ABCD{1}{0}{0}{1}{1}{-1}{-1}{0}$
satisfying
    \begin{align*}
    \det\left(\begin{smallmatrix} A_1&D_1\\A_2&D_2 \end{smallmatrix}\right)
        &= \det\left(\begin{smallmatrix} 1&-1\\0&0 \end{smallmatrix}\right) =0,\\
    \det\left(\begin{smallmatrix} B_1&D_1\\B_2&D_2 \end{smallmatrix}\right)
        &=\det\left(\begin{smallmatrix} 0&-1\\1&0 \end{smallmatrix}\right)
        =1 \neq0,\\
    \det\left(\begin{smallmatrix} C_1&D_1\\C_2&D_2 \end{smallmatrix}\right)
       &=\det\left(\begin{smallmatrix} 1&-1\\-1&0 \end{smallmatrix}\right)
       =-1
      =-\det\left(\begin{smallmatrix} B_1&D_1\\B_2&D_2 \end{smallmatrix}\right).
    \end{align*}
The discriminant
$\Delta=(B_1+C_1)^2-4A_1D_1=(0+1)^2+4=5$
is positive and is not a perfect square.
Thus, we conclude from Theorem~\ref{thm:not-periodic} that
every valid configuration over these Wang tiles is non-periodic.
\end{proof}

\begin{remark}
     Let
     $A=\left(\begin{smallmatrix} A_1\\A_2 \end{smallmatrix}\right)$,
     $B=\left(\begin{smallmatrix} B_1\\B_2 \end{smallmatrix}\right)$,
     $C=\left(\begin{smallmatrix} C_1\\C_2 \end{smallmatrix}\right)$ and
     $D=\left(\begin{smallmatrix} D_1\\D_2 \end{smallmatrix}\right)$.
     The hypothesis on the determinants in the statement of the theorem
     means that $A$, $D$ and $O=(0,0)$ are collinear
     and that the points $B$ and $C$ are equidistant to the line $AOD$
     but on two different side of that line.
     Also, the quadrilateral $OBDC$ has area
     $\det\left(\begin{smallmatrix} B_1&D_1\\B_2&D_2 \end{smallmatrix}\right)
    =-\det\left(\begin{smallmatrix} C_1&D_1\\C_2&D_2 \end{smallmatrix}\right)\neq0$.
\end{remark}

Observe that the Theorem holds but is inconclusive if we use the trivial partition 
$I_0=J_0=\varnothing$ or the trivial partition  $I_1=J_1=\varnothing$.
In these cases, the hypotheses of the theorem are not satisfied, since
$I_0=J_0=\varnothing$ implies $\Tcal=\Tcal_\boxplus$ implies $\alpha=\beta=1$ implies $D_1=D_2=0$,
while
$I_1=J_1=\varnothing$ implies $\Tcal=\Tcal_\square$ implies $\alpha=\beta=0$ implies $A_1=A_2=0$.
The discriminant is a perfect square in these cases.

    In the section that follows,
    we apply Theorem~\ref{thm:not-periodic}
    to conclude nonperiodicity of many well-known sets of Wang tiles.

\section{Applications: new proofs of non-periodicity of known sets of Wang tiles}
\label{sec:applications}

    Theorem~\ref{thm:not-periodic} may be used to prove the non-existence of
    periodic tilings for many other well-known sets of Wang tiles.
    We apply this method to 
    the encoding of the Penrose tilings into 24 Wang tiles
    and to the family of metallic mean Wang tiles.
    First, we present an encoding of Wang tiles as matrices which we use thereafter.

\subsection{Wang tiles encoded as matrices}
\label{sec:wang-tiles-encoded-as-nx4-matrices}

Let $\Tcal\subset I\times J\times I\times J$ be a set of Wang tiles
having horizontal and vertical stripes
with partitions $I=I_0\cup I_1$ and $J=J_0\cup J_1$
and
$ \Tcal = \Tcal_\square \cup \Tcal_\boxbar \cup \Tcal_\boxminus \cup \Tcal_\boxplus$.
Suppose the labels are encoded into vectors
using two maps $\vv$ and $\hh$:
\[
\vv:I\to\R^n
\qquad
\text{ and }
\qquad
\hh:J\to\R^n.
\]
Let $\I_{A}$ denote the indicator function of a set $A$ defined as
\[
    \I_A(x)=
    \begin{cases}
        1 & \text{ if } x\in A,\\
        0 & \text{ if } x\notin A.
    \end{cases}
\]
In this work, 
we encode a Wang tile $\tau=(a,b,c,d)$
as a $(n+1)\times 4$ matrix whose 
first row encodes the presence of stripes (vertical label in $I_1$ or
horizontal label in $J_1$) and whose columns of the remaining part are the
images of its labels under the maps $\vv$ and $\hh$:
\[
    M_\tau 
    :=
    M^{\vv,\hh}(\tau)
    =
    \left(
    \begin{array}{c|c|c|c}
        \I_{I_1}(a) & \I_{J_1}(b) & \I_{I_1}(c) &  \I_{J_1}(d)\\
        &&&\\[-3mm]
        \hline
        &&&\\[-3mm]
        \vv(a) & \hh(b) & \vv(c) & \hh(d)\\
    \end{array}
    \right)
    =
    \left(
    \begin{array}{c|c|c|c}
        a_0 & b_0 & c_0 & d_0\\
        a_1 & b_1 & c_1 & d_1\\
        \vdots & \vdots & \vdots & \vdots\\
        a_n & b_n & c_n & d_n
    \end{array}
    \right).
\]

If $e_1=(1,0,\dots,0)\in\R^{n+1}$, the partition
$ \Tcal = \Tcal_\square \cup \Tcal_\boxbar \cup \Tcal_\boxminus \cup \Tcal_\boxplus$
is equivalently defined by the first row of the matrices:
\begin{align*}
    \Tcal_\square   &= \{\tau\in\Tcal\colon e_1M_\tau= (0,0,0,0)\},\\
    \Tcal_\boxbar   &= \{\tau\in\Tcal\colon e_1M_\tau= (0,1,0,1)\},\\
    \Tcal_\boxminus &= \{\tau\in\Tcal\colon e_1M_\tau= (1,0,1,0)\},\\
    \Tcal_\boxplus  &= \{\tau\in\Tcal\colon e_1M_\tau= (1,1,1,1)\}.
\end{align*}

Now, suppose that $n=2$ and let $U=\IdTwoThree$.
In this situation, the last two lines of the matrix $M_\tau$ 
are the image of the labels under the maps $\vv$ and $\hh$:
\[
    U\cdot M_\tau 
    =
    \left(
    \begin{array}{c|c|c|c}
        \vv(a) & \hh(b) & \vv(c) & \hh(d)\\
    \end{array}
    \right).
\]
Therefore,
the left hand side of Equation~\eqref{eq:fourbulletpoints}
is equal to
\begin{equation}\label{eq:matrix-notation-of-va-hb-vc-hd}
    U\cdot M_\tau \cdot \left(\begin{smallmatrix} 1\\1\\-1\\-1 \end{smallmatrix}\right)
        =
    \vv(a) + \hh(b) - \vv(c) - \hh(d).
\end{equation}
We use this matrix notation of Wang tiles in
this section in the proof of 
Theorem~\ref{thm:Penrose-non-periodic},
Theorem~\ref{thm:metallic-mean-tiles-non-periodic}
and later in the proof of
Proposition~\ref{prop:sufficient-condition-quadratic-equation}.

Validity of configurations translates into the following relation on the matrix
encoding of Wang tiles.

\begin{remark}\label{rem:matrix-encoding-of-valid-configuration}
If $w:\Z^2\to\Tcal$ is a valid configuration over the tiles $\Tcal$,
then
\[
    M^{\vv,\hh}(w_n)
    \left(
    \begin{array}{c}
        1 \\ 0 \\ 0 \\ 0
    \end{array}
    \right)
    =
    M^{\vv,\hh}(w_{n+e_1})
    \left(
    \begin{array}{c}
        0 \\ 0 \\ 1 \\ 0
    \end{array}
    \right)
    \qquad
    \text{ and }
    \qquad
    M^{\vv,\hh}(w_{n})
    \left(
    \begin{array}{c}
        0 \\ 1 \\ 0 \\ 0
    \end{array}
    \right)
    =
    M^{\vv,\hh}(w_{n+e_2})
    \left(
    \begin{array}{c}
        0 \\ 0 \\ 0 \\ 1
    \end{array}
    \right)
\]
for every $n\in\Z^2$
where $\{e_1,e_2\}$ denotes the canonical base of $\Z^2$.
\end{remark}

\subsection{Penrose encoded into 24 Wang tiles}
\label{section:24 Penrose}

As observed in \cite[Exercise 11.1.2]{MR857454},
Penrose tilings can be encoded into a set $\Tcal_{24}$ of 24 Wang tiles.
This was studied in more details in
\cite{jang_directional_2021,jang_directional_2025}.
Here, we use the labelling of the Wang tiles proposed in this blog post\footnote{\url{http://www.slabbe.org/blogue/2025/05/on-the-bifurcation-diagram-proposed-by-jang-and-robinson/}}.

Using the sufficient condition in Theorem~\ref{thm:not-periodic},
we may prove that the set $\Tcal_{24}$ admits no periodic tiling of the plane.

\begin{theorem}\label{thm:Penrose-non-periodic}
    The set $\Tcal_{24}$ 
    \begin{center}
        \includegraphics{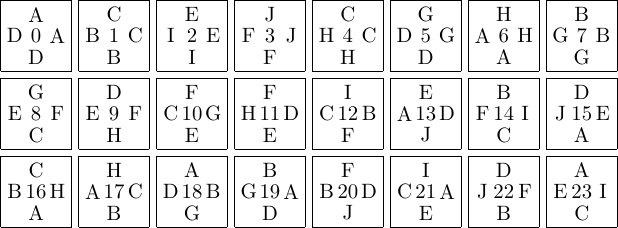}
    \end{center}
    of 24 Wang tiles encoding Penrose tilings
    admits no periodic tiling.
\end{theorem}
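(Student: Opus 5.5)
The plan is to apply Theorem~\ref{thm:not-periodic} directly, as in the proof of Corollary~\ref{cor:Ammann-non-periodic}. The tile set $\Tcal_{24}$ is shown only as a picture, so I cannot copy its labels here. The steps are fixed regardless.

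\textbf{Step 1: stripes.} From the labels, identify a partition $I=I_0\cup I_1$ of the vertical labels and $J=J_0\cup J_1$ of the horizontal labels. Each tile must have both vertical labels in the same part and both horizontal labels in the same part. This gives the partition $\Tcal_{24}=\Tcal_\square\cup\Tcal_\boxbar\cup\Tcal_\boxminus\cup\Tcal_\boxplus$, each part non-empty. In the Penrose encoding, the stripe labels should correspond to the Ammann-bar type edges.

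\textbf{Step 2: find the quadrilateral.} Find integer vector encodings $\vv:I\to\Z^2$ and $\hh:J\to\Z^2$ such that the quantity
\[
U\cdot M_\tau\cdot\OneOneMinusoneMinusOne=\vv(a)+\hh(b)-\vv(c)-\hh(d)
\]
depends only on which part $\tau$ belongs to. Its four values are the points $A,B,C,D\in\Z^2$.

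To find these encodings, treat the unknown label vectors and $A,B,C,D$ as variables. The condition above is then a homogeneous linear system of $24$ equations for each coordinate. I would compute its kernel over $\Q$, which is the kernel method announced for Section~\ref{sec:the-kernel-method}. From the kernel I would choose two independent integer solutions, one for the first coordinate and one for the second. They must be chosen so that $A$, $D$ and the origin are collinear and $\det(B,D)=-\det(C,D)\neq0$.

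A natural target is for the Penrose tiles to reproduce the Ammann quadrilateral itself:
\[
\ABCD{A_1}{A_2}{B_1}{B_2}{C_1}{C_2}{D_1}{D_2}=\ABCD{1}{0}{0}{1}{1}{-1}{-1}{0}.
\]
This is plausible because the stripe densities should again be $\varphi^{-1}$. Any other choice works provided the determinant conditions hold.

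\textbf{Step 3: check the hypotheses.} Verify $\det(A,D)=0$ and $\det(B,D)=-\det(C,D)\neq0$. Then compute $(B_i+C_i)^2-4A_iD_i$ for some $i$; it should equal $5$, or $5$ times a square. This is positive and not a perfect square, so Theorem~\ref{thm:not-periodic} shows that $\Tcal_{24}$ admits no periodic tiling.

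\textbf{Main obstacle.} The hard part is Step 2, finding encodings that make the left-hand side constant on each of the four classes across all 24 tiles. Solving the linear system settles whether such encodings exist. The remaining difficulty is picking kernel vectors that also satisfy the determinant conditions. The first thing I would try is to impose $\det(A,D)=0$ directly, by requiring $A$ and $D$ to lie on the horizontal axis, as in the Ammann example.
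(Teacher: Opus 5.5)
Your route is the same as the paper's: apply Theorem~\ref{thm:not-periodic} with encodings $\vv,\hh$ found by the kernel method. As written, however, the proposal is a plan and not a proof. The gap is the only substantive step: you never establish that there are maps $\vv,\hh$ making $\vv(a)+\hh(b)-\vv(c)-\hh(d)$ constant on each of the four classes across all 24 tiles, and also satisfying the determinant and discriminant conditions. You leave this as \emph{solving the linear system settles whether such encodings exist}, and until it is settled, Steps 1 and 3 say nothing about $\Tcal_{24}$.

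The paper fills this step explicitly. It gives the partition $\Tcal_\square=\{2,3\}$, $\Tcal_\boxminus=\{10,11,12,13,20,21\}$, $\Tcal_\boxbar=\{8,9,14,15,22,23\}$, $\Tcal_\boxplus=\{0,1,4,5,6,7,16,17,18,19\}$, together with explicit maps $\vv,\hh$ into $\Z^3$ whose first coordinate is the stripe indicator. It then checks tile by tile that the four values are $A=\smallvector{2}{0}$, $B=\smallvector{4}{3}$, $C=\smallvector{-2}{-3}$, $D=\smallvector{-2}{0}$. This gives $\det(A,D)=0$, $\det(B,D)=6=-\det(C,D)$, and $(B_1+C_1)^2-4A_1D_1=20$.

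Your obstacle paragraph also contains a misconception. For $\Tcal_{24}$ one has $\nullity(-S_\Tcal|\Mcal_\Tcal)-\nullity(\Mcal_\Tcal)=8-6=2$, so $\pi_1(X)$ is two-dimensional. Hence every vertex matrix arising from some encoding equals $G\cdot\ABCD{2}{0}{4}{3}{-2}{-3}{-2}{0}$ for some real $2\times 2$ matrix $G$. Since $\det(GX,GY)=\det(G)\det(X,Y)$, the conditions $\det(A,D)=0$ and $\det(B,D)=-\det(C,D)\neq0$ hold either for every rank-2 choice or for none. So you cannot impose them by choosing kernel vectors. You can only normalize once you know they hold, for example by moving the line through $A$, $O$, $D$ onto the horizontal axis. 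Note also that $\nullity(\Mcal_\Tcal)=6>4$, so besides $k_{I_0},k_{I_1},k_{J_0},k_{J_1}$ there are two further kernel directions of $\Mcal_\Tcal$ to quotient out when extracting the quadrilateral.

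Your Ammann guess is nevertheless attainable. The matrix $G=\left(\begin{smallmatrix}1/2&-2/3\\0&1/3\end{smallmatrix}\right)$ sends the paper's quadrilateral to $\ABCD{1}{0}{0}{1}{1}{-1}{-1}{0}$. Applying $G$ to the last two coordinates of the paper's encodings therefore yields maps that determine exactly the Ammann quadrilateral, with discriminant $5$. This is legitimate: Definition~\ref{def:determines-a-quadrilateral} allows non-integer encodings, and Theorem~\ref{thm:not-periodic} only needs integer vertices.
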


\begin{proof}
We map the vertical and horizontal labels to vectors as follows:
\[
    \vv:\left\{\begin{array}{cccc}
            A\mapsto (1, 3, 1), &F\mapsto (0, 1, 1), \\
            B\mapsto (1, 3, 2), &G\mapsto (1, 4, 2), \\
            C\mapsto (1, 0, 0), &H\mapsto (1, 2, 1), \\
            D\mapsto (1, 6, 3), &I\mapsto (0, 0, 0), \\
            E\mapsto (0, 3, 2), &J\mapsto (0, 4, 3),
    \end{array}\right.
    \quad
    \text{ and }
    \quad
    \hh:\left\{\begin{array}{cccc}
            A\mapsto (1, 2, 2), &F\mapsto (0, 1, 2), \\
            B\mapsto (1, 0, 1), &G\mapsto (1, 1, 1), \\
            C\mapsto (1, 1, 3), &H\mapsto (1, 1, 2), \\
            D\mapsto (1, 1, 0), &I\mapsto (0, 2, 3), \\
            E\mapsto (0, 1, 1), &J\mapsto (0, 0, 0).
    \end{array}\right.
\]
Using the vector $e=(1,0,0)$, the partition of the set of Wang tiles is:
\begin{align*}
    \Tcal_\square   &= \{2, 3\}, \\
    \Tcal_\boxminus &= \{10, 11, 12, 13, 20, 21\}, \\
    \Tcal_\boxbar   &= \{8, 9, 14, 15, 22, 23\}, \\
    \Tcal_\boxplus  &= \{0, 1, 4, 5, 6, 7, 16, 17, 18, 19\}.
\end{align*}
Now take $U=\IdTwoThree$.
For each of the Wang tiles, we have
\begin{itemize}
\item if $\tau \in\Tcal_\square$,  then
    \begin{align*}
        M_\tau&\in\{M_{2},M_{3}\}
        =
        \left\{
            \left(\begin{smallmatrix}%
            0 & 0 & 0 & 0 \\
            3 & 1 & 0 & 2 \\
            2 & 1 & 0 & 3
            \end{smallmatrix}\right),
            \left(\begin{smallmatrix}%
            0 & 0 & 0 & 0 \\
            4 & 0 & 1 & 1 \\
            3 & 0 & 1 & 2
            \end{smallmatrix}\right)
        \right\}
    \end{align*}
    which all satisfy $\IdTwoThree
        M_\tau \OneOneMinusoneMinusOne=
        \left(\begin{smallmatrix}%
        2\\
        0
        \end{smallmatrix}\right)$,
\item if $\tau\in\Tcal_\boxminus$,  then
    \begin{align*}
        M_\tau&\in\{M_{10},M_{11},M_{12},M_{13},M_{20},M_{21}\}\\
        &=\left\{
            \left(\begin{smallmatrix}%
            1 & 0 & 1 & 0 \\
            4 & 1 & 0 & 1 \\
            2 & 2 & 0 & 1
            \end{smallmatrix}\right),
            \left(\begin{smallmatrix}%
            1 & 0 & 1 & 0 \\
            6 & 1 & 2 & 1 \\
            3 & 2 & 1 & 1
            \end{smallmatrix}\right),
            \left(\begin{smallmatrix}%
            1 & 0 & 1 & 0 \\
            3 & 2 & 0 & 1 \\
            2 & 3 & 0 & 2
            \end{smallmatrix}\right),
            \left(\begin{smallmatrix}%
            1 & 0 & 1 & 0 \\
            6 & 1 & 3 & 0 \\
            3 & 1 & 1 & 0
            \end{smallmatrix}\right),
            \left(\begin{smallmatrix}%
            1 & 0 & 1 & 0 \\
            6 & 1 & 3 & 0 \\
            3 & 2 & 2 & 0
            \end{smallmatrix}\right),
            \left(\begin{smallmatrix}%
            1 & 0 & 1 & 0 \\
            3 & 2 & 0 & 1 \\
            1 & 3 & 0 & 1
            \end{smallmatrix}\right)
        \right\}
    \end{align*}
    which all satisfy $\IdTwoThree
        M_\tau \OneOneMinusoneMinusOne=
        \left(\begin{smallmatrix}%
        4\\
        3
        \end{smallmatrix}\right)$,
\item if $\tau\in\Tcal_\boxbar$,  then
    \begin{align*}
        M_\tau&\in\{M_{8},M_{9},M_{14},M_{15},M_{22},M_{23}\}\\
        &=
        \left\{
            \left(\begin{smallmatrix}%
            0 & 1 & 0 & 1 \\
            1 & 1 & 3 & 1 \\
            1 & 1 & 2 & 3
            \end{smallmatrix}\right),
            \left(\begin{smallmatrix}%
            0 & 1 & 0 & 1 \\
            1 & 1 & 3 & 1 \\
            1 & 0 & 2 & 2
            \end{smallmatrix}\right),
            \left(\begin{smallmatrix}%
            0 & 1 & 0 & 1 \\
            0 & 0 & 1 & 1 \\
            0 & 1 & 1 & 3
            \end{smallmatrix}\right),
            \left(\begin{smallmatrix}%
            0 & 1 & 0 & 1 \\
            3 & 1 & 4 & 2 \\
            2 & 0 & 3 & 2
            \end{smallmatrix}\right),
            \left(\begin{smallmatrix}%
            0 & 1 & 0 & 1 \\
            1 & 1 & 4 & 0 \\
            1 & 0 & 3 & 1
            \end{smallmatrix}\right),
            \left(\begin{smallmatrix}%
            0 & 1 & 0 & 1 \\
            0 & 2 & 3 & 1 \\
            0 & 2 & 2 & 3
            \end{smallmatrix}\right)
        \right\}
    \end{align*}
    which all satisfy $\IdTwoThree
        M_\tau \OneOneMinusoneMinusOne=
        \left(\begin{smallmatrix}%
        -2\\
        -3
        \end{smallmatrix}\right)$,
\item if $\tau\in\Tcal_\boxplus$,  then
    \begin{align*}
        M_\tau&\in\{M_{0},M_{1},M_{4},M_{5},M_{6},M_{7},M_{16},M_{17},M_{18},M_{19}\}\\
        &=
        \left\{
            \begin{array}{l}
            \left(\begin{smallmatrix}%
            1 & 1 & 1 & 1 \\
            3 & 2 & 6 & 1 \\
            1 & 2 & 3 & 0
            \end{smallmatrix}\right),
            \left(\begin{smallmatrix}%
            1 & 1 & 1 & 1 \\
            0 & 1 & 3 & 0 \\
            0 & 3 & 2 & 1
            \end{smallmatrix}\right),
            \left(\begin{smallmatrix}%
            1 & 1 & 1 & 1 \\
            0 & 1 & 2 & 1 \\
            0 & 3 & 1 & 2
            \end{smallmatrix}\right),
            \left(\begin{smallmatrix}%
            1 & 1 & 1 & 1 \\
            4 & 1 & 6 & 1 \\
            2 & 1 & 3 & 0
            \end{smallmatrix}\right),
            \left(\begin{smallmatrix}%
            1 & 1 & 1 & 1 \\
            2 & 1 & 3 & 2 \\
            1 & 2 & 1 & 2
            \end{smallmatrix}\right),\\[3mm]
            \left(\begin{smallmatrix}%
            1 & 1 & 1 & 1 \\
            3 & 0 & 4 & 1 \\
            2 & 1 & 2 & 1
            \end{smallmatrix}\right),
            \left(\begin{smallmatrix}%
            1 & 1 & 1 & 1 \\
            2 & 1 & 3 & 2 \\
            1 & 3 & 2 & 2
            \end{smallmatrix}\right),
            \left(\begin{smallmatrix}%
            1 & 1 & 1 & 1 \\
            0 & 1 & 3 & 0 \\
            0 & 2 & 1 & 1
            \end{smallmatrix}\right),
            \left(\begin{smallmatrix}%
            1 & 1 & 1 & 1 \\
            3 & 2 & 6 & 1 \\
            2 & 2 & 3 & 1
            \end{smallmatrix}\right),
            \left(\begin{smallmatrix}%
            1 & 1 & 1 & 1 \\
            3 & 0 & 4 & 1 \\
            1 & 1 & 2 & 0
            \end{smallmatrix}\right)
            \end{array}
        \right\}
    \end{align*}
    which all satisfy $\IdTwoThree
        M_\tau \OneOneMinusoneMinusOne=
        \left(\begin{smallmatrix}%
        -2\\
        0
        \end{smallmatrix}\right)$.
\end{itemize}
Thus, we have that the set of Wang tiles determines the quadrilateral $ABDC$
with
$\ABCD{A_1}{A_2}{B_1}{B_2}{C_1}{C_2}{D_1}{D_2}
=\ABCD{2}{0}{4}{3}{-2}{-3}{-2}{0}$
satisfying
    \begin{align*}
    \det\left(\begin{smallmatrix} A_1&D_1\\A_2&D_2 \end{smallmatrix}\right)
        &= \det\left(\begin{smallmatrix} 2&-2\\0&0 \end{smallmatrix}\right) =0,\\
    \det\left(\begin{smallmatrix} B_1&D_1\\B_2&D_2 \end{smallmatrix}\right)
        &=\det\left(\begin{smallmatrix} 4&-2\\3&0 \end{smallmatrix}\right)
        =6 \neq0,\\
    \det\left(\begin{smallmatrix} C_1&D_1\\C_2&D_2 \end{smallmatrix}\right)
       &=\det\left(\begin{smallmatrix} -2&-2\\-3&0 \end{smallmatrix}\right)
       =-6
      =-\det\left(\begin{smallmatrix} B_1&D_1\\B_2&D_2 \end{smallmatrix}\right).
    \end{align*}
The discriminant
$\Delta=(B_1+C_1)^2-4A_1D_1=(4-2)^2+16=20$
is positive and is not a perfect square.
Thus, we conclude from Theorem~\ref{thm:not-periodic} that
every valid configuration over these Wang tiles is non-periodic.
\end{proof}

\subsection{Metallic mean Wang tiles}
\label{sec:metallic-mean-computer-chip}

A family of aperiodic set of Wang tiles was introduced in \cite{MR4944989,MR4963140}
associated with metallic mean numbers. 
Although it was proved in \cite{MR4963140} that tilings by metallic mean Wang tiles
are associated with irrational rotations on a torus, the proof of non-periodicity
of these tilings was deduced from its self-similarity and its minimality
proved in \cite{MR4944989}. A less tedious proof of non-periodicity of these tilings 
based on a short Kari-like argument \cite{MR1417578} was left open. 
In this section, using Theorem~\ref{thm:not-periodic}, we obtain
a short proof of non-periodicity of every tiling of the plane by metallic mean Wang tiles.

We recall from \cite{MR4963140} the definition of the metallic mean Wang tiles
as instances of a computer chip.
For every integer $n\geq1$, let $V_n\subset\N^3$ be a finite subset of vectors
\[
    V_n = \{(v_0,v_1,v_2)\in\N^3\colon 
                       0\leq v_0\leq v_1\leq 1 
                       \text{ and } 
                       v_1\leq v_2\leq n+1\}
\]
with nondecreasing entries where the middle entry is at most 1.
For every integer $n\geq1$, a function 
\[
\begin{array}{rccl}
    \theta_n:&V_n\times V_n & \to & \Z^3\\
    &(u_0,u_1,u_2), (v_0,v_1,v_2) 
    & \mapsto & (r_0,r_1,r_2),
\end{array}
\]
taking two vectors as input and returning one vector
is defined by the rule
\begin{equation}\label{eq:defintion-map-theta}
\left\{
\begin{array}{ll}
    &r_0=u_0,\\
    &r_1=\begin{cases}
            v_2-n       & \text{ if } u_0 = 0,\\
            1  & \text{ if } u_0 = 1,
         \end{cases}\\
    &r_2=\begin{cases}
            v_1+u_0             & \text{ if } v_0 = 0,\\
            u_2+1  & \text{ if } v_0 = 1.
         \end{cases}
\end{array}
\right.
\end{equation}
The set of metallic mean Wang tiles can be defined as the set of instances of a
square-shape computer chip outputing the value of map $\theta_n$ on the right
and on the top depending on the left input $u$ and right input $v$ \cite{MR4963140}.  
More precisely, it is the set
\begin{equation}\label{eq:Ccal}
    \Ccal_n=
    \left\{
    \raisebox{-9.5mm}{
    \begin{tikzpicture}[auto]
        \tile{white}{0}{0}{\theta_n(u,v)}{\theta_n(v,u)}{u=(u_0,u_1,u_2)}{v=(v_0,v_1,v_2)}
    \end{tikzpicture}}
    \middle|\,
        u,v\in V_n
        \text{ such that }
        \theta_n(u,v),\theta_n(v,u)\in V_n
    \right\}
\end{equation}
which is the finite set of all possible instances of the $\theta_n$-chip.

\begin{theorem}\label{thm:metallic-mean-tiles-non-periodic}
    For every $n\geq 1$, the set $\Ccal_n$ of metallic mean Wang tiles 
    admits no periodic tiling.
\end{theorem}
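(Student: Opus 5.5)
The plan is to apply Theorem~\ref{thm:not-periodic} directly. This means exhibiting a stripe partition of the labels and encodings $\vv,\hh$ of $V_n$ into $\Z^2$ such that $\Ccal_n$ determines a quadrilateral $ABDC$ satisfying the determinant and discriminant hypotheses. The stripes come for free from the definition of $\theta_n$. Since $r_0=u_0$, a tile with west label $u$ and south label $v$ has east label with first entry $u_0$ and north label with first entry $v_0$. So we take $I_1=J_1=\{x\in V_n : x_0=1\}$ and $I_0=J_0=\{x_0=0\}$. Then $\Tcal_\square,\Tcal_\boxminus,\Tcal_\boxbar,\Tcal_\boxplus$ correspond to $(u_0,v_0)=(0,0),(1,0),(0,1),(1,1)$ respectively.

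Next I compute $\theta_n(u,v)-u$ and $\theta_n(v,u)-v$ casewise from \eqref{eq:defintion-map-theta}, using that $u_0=1$ forces $u_1=1$.
\begin{itemize}
\item Case $(0,0)$: $(v_2-n-u_1,\; v_1-u_2)$ and $(u_2-n-v_1,\; u_1-v_2)$.
\item Case $(1,0)$: $(0,\; v_1+1-u_2)$ and $(u_2-n-v_1,\; 1)$.
\item Case $(0,1)$: symmetric to case $(1,0)$.
\item Case $(1,1)$: $(0,1)$ and $(0,1)$.
\end{itemize}
For the first coordinate of both $\vv$ and $\hh$ I take $x\mapsto x_1+x_2$. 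All variable terms then cancel, and the first coordinates are $A_1=-2n$, $B_1=C_1=2-n$, $D_1=2$. This already yields the discriminant
\[
(B_1+C_1)^2-4A_1D_1=(4-2n)^2+16n=4(n^2+4).
\]
It is positive and not a perfect square, since $n^2<n^2+4<(n+2)^2$ and $n^2+4=(n+1)^2$ would force $2n=3$.

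It remains to find a second coordinate with $A_2/A_1=D_2/D_1$ (so that $\det(A,D)=0$) and $B_2\neq C_2$ with $\det(B,D)=-\det(C,D)\neq0$. The simplest way to get $\det(A,D)=0$ is $A_2=D_2=0$, after which the condition reads $B_2=-C_2\neq0$. This is the main obstacle. A quick check shows that a second coordinate given by the same linear form $ax_1+bx_2$ for both $\vv$ and $\hh$ is forced to be symmetric (constancy on $\Tcal_\square$ requires it), hence gives $B_2=C_2$. So the encoding must break the horizontal/vertical symmetry.

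My first attempt is to allow $\vv$ and $\hh$ to differ and to depend also on $x_0$, with forms like $a x_1+bx_2+e x_0+f x_0x_2$. I would impose constancy on each class as linear equations in the coefficients, using the kernel method of Section~\ref{sec:the-kernel-method} on the matrices $M_\tau$ for small $n$ to guess a general pattern. I would then verify that pattern symbolically for all $n$ with the case table above. If $A_2=D_2=0$ cannot be achieved, I would instead aim for a second coordinate with $A_2=-nD_2$, matching $A_1=-nD_1$.

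Once such encodings are found, the tiles determine $ABDC$ with the required determinant conditions. Theorem~\ref{thm:not-periodic}, applied with $i=1$, then shows that $\Ccal_n$ admits no periodic tiling.
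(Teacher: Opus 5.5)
\textbf{There is a genuine gap: the second coordinate of the encoding is never exhibited.} Everything you do compute is correct. The stripe partition by the first entry is valid. Your case table for $\theta_n(u,v)-u$ and $\theta_n(v,u)-v$ checks out. The first coordinate $x\mapsto x_1+x_2$ gives $A_1=-2n$, $B_1=C_1=2-n$, $D_1=2$, with discriminant $4(n^2+4)$, which is not a square.

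But the proof stops at the step that carries the whole argument. Without a second coordinate, the determinant hypotheses $\det(A,D)=0$ and $\det(B,D)=-\det(C,D)\neq0$ of Theorem~\ref{thm:not-periodic} are never verified. What you offer instead is a search plan: the kernel method for small $n$, a guessed pattern, and a fallback target $A_2=-nD_2$.

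This step is not a formality. With only the first coordinate, Equation~\eqref{alpha_beta_relations} is a single scalar equation in two unknowns, and it has rational solutions in $[0,1]^2$. For $n=1$, the pair $(\alpha,\beta)=(0,\tfrac23)$ satisfies $-2(1-\alpha)(1-\beta)+(1-\alpha)\beta+\alpha(1-\beta)+2\alpha\beta=0$. So one coordinate cannot rule out periodic tilings. It is the second coordinate, through Lemma~\ref{lem:det-conditions-implies-alpha-equal-beta-are-quadratic}, that forces $\alpha=\beta$ and hence irrationality.

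\textbf{The missing coordinate is easy to supply and needs no dependence on $x_0$.} Take $\vv(x)=(x_1+x_2,\;x_1-x_2)$ and $\hh(x)=(x_1+x_2,\;x_2-x_1)$. Apply $x\mapsto x_1-x_2$ to your east-minus-west vectors and $x\mapsto x_2-x_1$ to your north-minus-south vectors. This gives $0$, $n$, $-n$, $0$ in the cases $(u_0,v_0)=(0,0),(1,0),(0,1),(1,1)$. So $A=(-2n,0)$, $B=(2-n,n)$, $C=(2-n,-n)$, $D=(2,0)$. Then $\det(A,D)=0$ and $\det(B,D)=-2n=-\det(C,D)\neq0$, and Theorem~\ref{thm:not-periodic} applies with $i=1$.

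Up to the invertible change of coordinates (sum, difference), this is exactly the paper's encoding $\vv(x)=(x_1,x_2)$, $\hh(x)=(x_2,x_1)$, i.e.\ swap the last two entries for horizontal labels. The paper uses the opposite stripe convention, with stripes where $x_0=0$. That encoding yields the quadrilateral with vertices $(1,1)$, $(1,1-n)$, $(1-n,1)$, $(-n,-n)$ and discriminant $n^2+4$.
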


\begin{proof}
    We use the description of the metallic mean Wang tiles as instances of a
    computer chip recalled in \eqref{eq:Ccal}.
    But, when encoding the Wang tiles into matrices, we use the vectors in
    $V_n$ for the vertical labels:
    \[
        \vv:(u_0,u_1,u_2) \mapsto (1-u_0,u_1,u_2),
    \]
    and we swap the last two entries of the vectors in $V_n$ for the horizontal labels:
    \[
        \hh:(v_0,v_1,v_2) \mapsto (1-v_0,v_2,v_1).
    \]
Thus, we have the following matrix representation of the Wang tiles
\begin{align*}
    M_{(\theta_n(u,v),\theta_n(v,u),u,v)}
    &=
    \left(
    \begin{array}{c|c|c|c}
        \vv(\theta_n(u,v)) & \hh(\theta_n(v,u)) & \vv(u) & \hh(v)\\
    \end{array}
    \right).
\end{align*}
Now take $U=\IdTwoThree$.
Depending on the value of $u_0,v_0\in\{0,1\}$, we have the following four subsets
of Wang tiles:
\begin{itemize}
\item if $\tau \in\Tcal_\square$,   then 
    \[
        M_\tau
        = \left(\begin{array}{cccc}
            1-u_0   & 1-v_0   & 1-u_0 & 1-v_0\\
            1     & v_2+1 & u_1 & v_2\\
            u_2+1 & 1     & u_2 & v_1
        \end{array}\right)
        = \left(\begin{array}{cccc}
            0 & 0 & 0 & 0\\
            1 & v_2+1 & 1 & v_2\\
            u_2+1 & 1 & u_2 & 1
        \end{array}\right)
    \]
    which satisfy
    $ \IdTwoThree M_\tau \OneOneMinusoneMinusOne=
        \left(\begin{smallmatrix}%
        1\\
        1
        \end{smallmatrix}\right)$.
\item if $\tau \in\Tcal_\boxbar$,   then 
    \[
        M_\tau
        = \left(\begin{array}{cccc}
            1-u_0   & 1-v_0     & 1-u_0 & 1-v_0\\
            1     & v_2+1   & u_1 & v_2\\
            v_1+u_0 & u_2-n & u_2 & v_1
        \end{array}\right)
        = \left(\begin{array}{cccc}
            0   & 1     & 0 & 1\\
            1     & v_2+1   & 1 & v_2\\
            v_1+1 & u_2-n & u_2 & v_1
        \end{array}\right)
    \]
    which satisfy
        $ \IdTwoThree M_\tau \OneOneMinusoneMinusOne=
        \left(\begin{smallmatrix}%
        1\\
        1-n
        \end{smallmatrix}\right)$,
\item if $\tau \in\Tcal_\boxminus$, then 
    \[
        M_\tau
        = \left(\begin{array}{cccc}
            1-u_0 & 1-v_0     & 1-u_0 & 1-v_0\\
            v_2-n & u_1+v_0 & u_1 & v_2\\
            u_2+1 & 1 & u_2 & v_1
        \end{array}\right)
        = \left(\begin{array}{cccc}
            1 & 0     & 1 & 0\\
            v_2-n & u_1+1 & u_1 & v_2\\
            u_2+1 & 1 & u_2 & 1
        \end{array}\right)
    \]
    which satisfy
        $\IdTwoThree M_\tau \OneOneMinusoneMinusOne=
        \left(\begin{smallmatrix}%
        1-n\\
        1
        \end{smallmatrix}\right)$,
\item if $\tau \in\Tcal_\boxplus$,  then 
    \[
        M_\tau
        = \left(\begin{array}{cccc}
            1-u_0   & 1-v_0     & 1-u_0 & 1-v_0\\
            v_2-n & u_1+v_0   & u_1 & v_2\\
            v_1+u_0 & u_2-n & u_2 & v_1
        \end{array}\right)
        = \left(\begin{array}{cccc}
            1   & 1     & 1 & 1\\
            v_2-n & u_1 & u_1 & v_2\\
            v_1 & u_2-n & u_2 & v_1
        \end{array}\right)
    \]
    which satisfy
        $ \IdTwoThree M_\tau \OneOneMinusoneMinusOne=
        \left(\begin{smallmatrix}%
        -n\\
        -n
        \end{smallmatrix}\right)$,
\end{itemize}
Thus, we have that the set of Wang tiles determines the quadrilateral $ABDC$ where
$\ABCD{A_1}{A_2}{B_1}{B_2}{C_1}{C_2}{D_1}{D_2}
=\ABCD{1}{1}{1}{1-n}{1-n}{1}{-n}{-n}$
satisfies
    \begin{align*}
    \det\left(\begin{smallmatrix} A_1&D_1\\A_2&D_2 \end{smallmatrix}\right)
        &= \det\left(\begin{smallmatrix} 1&-n\\1&-n \end{smallmatrix}\right) =0,\\
    \det\left(\begin{smallmatrix} B_1&D_1\\B_2&D_2 \end{smallmatrix}\right)
       &=\det\left(\begin{smallmatrix} 1&-n\\1-n&-n \end{smallmatrix}\right)
           =-n+n(1-n) =-n^2 \neq0,\\
    \det\left(\begin{smallmatrix} C_1&D_1\\C_2&D_2 \end{smallmatrix}\right)
        &=\det\left(\begin{smallmatrix} 1-n&-n\\1&-n \end{smallmatrix}\right)
        =n^2
      =-\det\left(\begin{smallmatrix} B_1&D_1\\B_2&D_2 \end{smallmatrix}\right)
    \end{align*}
since $n\geq1$ is a positive integer.
The discriminant
$\Delta=(B_1+C_1)^2-4A_1D_1=n^2+4$
is positive and is not a perfect square when $n\geq1$.
Thus, we conclude from Theorem~\ref{thm:not-periodic} that
every valid configuration over these metallic mean Wang tiles is non-periodic.
\end{proof}

It is interesting to observe that the maps $\vv$ and $\hh$ used in the proof of
Theorem~\ref{thm:metallic-mean-tiles-non-periodic} almost correspond to the
labels of the metallic mean Wang tiles as 3-dimensional vectors.
This is all that was missing in order to provide a positive answer
to Question 8.5 in \cite{MR4963140}.

\section{Finding the quadrilateral determined by a set of Wang tiles with stripes}  
\label{sec:the-kernel-method}

In this section, we present a method to compute 
the maps $\vv$, $\hh$ and the quadrilateral $ABDC$ for a set
of Wang tiles with horizontal and vertical stripes.
This method was used for determining the maps $\vv$ and $\hh$ 
and the quadrilaterals for Ammann and Penrose examples in the previous section.
The strategy is based on linear algebra.

\subsection{The pullback of two linear transformations}
\label{sec:pullback-of-linear-transformations}

Let $U_1$, $U_2$ and $V$ be vector spaces
and $\lambda_1:U_1\to V$ and $\lambda_2:U_2\to V$
be two linear transformations.
Together with the natural projections
$\pi_1:U_1\times U_2\to U_1$
and
$\pi_2:U_1\times U_2\to U_2$,
we have the following diagram
\[
\begin{tikzcd}
U_2 \arrow[r, "\lambda_2"]  & V \\
U_1\times U_2 \arrow[u, "\pi_2"]\arrow[r, "\pi_1"] & U_1 \arrow[u, "\lambda_1"]
\end{tikzcd}.
\]
Let $F:U_1 \times U_2 \to V$ be the map defined as
    $F = \lambda_2 \circ \pi_2 - \lambda_1 \circ \pi_1$.
The kernel of $F$ is
\[
    X = \{(u_1,u_2) \in U_1 \times U_2 \,|\, \lambda_1(u_1) = \lambda_2(u_2)\},
\]
which is also known as the \emph{pullback} of $\lambda_1$ and $\lambda_2$,
namely, 
the subspace of $U_1 \times U_2$ where the images of both components are equal
\cite{zbMATH03367095}.
When restricted to $X\subset U_1 \times U_2$, the above diagram is commutative.
Note that the image of $F$ is the vector space $\Ima(\lambda_2)+\Ima(\lambda_1)$,
and the image of $X$ under $\lambda_1 \circ \pi_1 = \lambda_2 \circ \pi_2$ is
\begin{equation}\label{eq:imagelambda1capimagelamda2}
        \lambda_1 \circ \pi_1 (X)
        = \lambda_2 \circ \pi_2(X)
        = \Ima(\lambda_1) \cap \Ima(\lambda_2).
\end{equation}

Recall that $\nullity(\lambda)$ is the dimension of the kernel of a linear transformation
$\lambda:U\to V$ and $\rank(\lambda)$ is the dimension of its image.
They satisfy the equation $\nullity(\lambda)+\rank(\lambda)=\dim(U)$.

\begin{lemma}\label{lem:linear-algebra-nullity-observation-abstract}
Let $\lambda_1:U_1\to V$ and $\lambda_2:U_2\to V$
be two linear transformations where
$U_1$, $U_2$ and $V$ are vector spaces.
Assume that $\lambda_1$ is injective.
    Let $d\geq0$ be an integer.
The following conditions are equivalent:
\begin{enumerate}[(i)]
    \item $\nullity(F) - \nullity(\lambda_2) \geq d$,
    \item $\rank(F) - \rank(\lambda_2) \leq \dim(U_1)-d$,
    \item $\dim\Ima(\lambda_1) \cap \Ima(\lambda_2)\geq d$,
    \item there exists a $d$-dimensional vector space $X'\subseteq X$ such
        that $\dim(\pi_1(X'))=d$.
\end{enumerate}
\end{lemma}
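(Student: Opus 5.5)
The plan is to pass everything through the pullback $X=\ker F$ and the two dimension formulas $\nullity+\rank=\dim$. Let $\pi_1,\pi_2$ be the projections and write $X=\{(u_1,u_2):\lambda_1(u_1)=\lambda_2(u_2)\}$. I will prove (i)$\Leftrightarrow$(ii) numerically, (i)$\Leftrightarrow$(iii) by computing $\dim X$ in two ways, and (iii)$\Leftrightarrow$(iv) by using injectivity of $\lambda_1$ to identify $\pi_1(X)$ with $\Ima(\lambda_1)\cap\Ima(\lambda_2)$.

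For (i)$\Leftrightarrow$(ii), apply rank–nullity to $F:U_1\times U_2\to V$ and to $\lambda_2:U_2\to V$. This gives $\nullity(F)+\rank(F)=\dim U_1+\dim U_2$ and $\nullity(\lambda_2)+\rank(\lambda_2)=\dim U_2$. Subtracting, $\nullity(F)-\nullity(\lambda_2)=\dim U_1-(\rank(F)-\rank(\lambda_2))$. The two inequalities are therefore equivalent. This tacitly assumes the spaces are finite-dimensional, which is the setting of the paper.

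For (i)$\Leftrightarrow$(iii), consider the surjective map $\lambda_1\circ\pi_1|_X : X\to \Ima(\lambda_1)\cap\Ima(\lambda_2)$, which is onto by \eqref{eq:imagelambda1capimagelamda2}. Its kernel consists of the pairs $(u_1,u_2)\in X$ with $\lambda_1(u_1)=0$. Since $\lambda_1$ is injective, this forces $u_1=0$, and then $\lambda_2(u_2)=0$. So the kernel is $\{0\}\times\ker\lambda_2$, and
\[
\dim X=\nullity(\lambda_2)+\dim\bigl(\Ima(\lambda_1)\cap\Ima(\lambda_2)\bigr).
\]
Because $X=\ker F$, this says $\nullity(F)-\nullity(\lambda_2)=\dim(\Ima\lambda_1\cap\Ima\lambda_2)$, which gives (i)$\Leftrightarrow$(iii).

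For (iii)$\Rightarrow$(iv), note that $\lambda_1$ restricts to an isomorphism $\pi_1(X)\to\Ima\lambda_1\cap\Ima\lambda_2$, again by injectivity and \eqref{eq:imagelambda1capimagelamda2}. Hence $\dim\pi_1(X)\ge d$. Choose a $d$-dimensional subspace $W\subseteq\pi_1(X)$ and pick preimages in $X$ of a basis of $W$. Let $X'$ be their span. Then $\pi_1$ maps $X'$ onto $W$, so $\dim X'\ge d$; since $X'$ is spanned by $d$ vectors, $\dim X'=d$ and $\dim\pi_1(X')=d$.

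For (iv)$\Rightarrow$(iii), use $\pi_1(X')\subseteq\pi_1(X)$, so $\dim\pi_1(X)\ge d$. The same isomorphism then gives $\dim(\Ima\lambda_1\cap\Ima\lambda_2)\ge d$.

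The only step needing care is the identification of the kernel of $\lambda_1\circ\pi_1|_X$ with $\{0\}\times\ker\lambda_2$, together with the isomorphism $\pi_1(X)\cong\Ima\lambda_1\cap\Ima\lambda_2$. These are exactly where the injectivity of $\lambda_1$ is used. Everything else is bookkeeping with rank–nullity.
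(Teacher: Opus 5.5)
Your proof is correct, but it is built around a different key fact than the paper's.

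The paper runs the cycle (i)$\Rightarrow$(ii)$\Rightarrow$(iii)$\Rightarrow$(iv)$\Rightarrow$(i):
\begin{itemize}
\item (ii)$\Rightarrow$(iii) comes from Grassmann's formula $\dim(\Ima\lambda_1\cap\Ima\lambda_2)=\rank\lambda_1+\rank\lambda_2-\rank F$, using $\Ima F=\Ima\lambda_1+\Ima\lambda_2$ and $\rank\lambda_1=\dim U_1$.
\item (iv)$\Rightarrow$(i) is proved directly. The paper shows that $\{\mathbf{0}\}\times\Ker\lambda_2+X'\subseteq\Ker F$ has dimension at least $\nullity(\lambda_2)+d$, because $\pi_2(X')$ is $d$-dimensional and meets $\Ker\lambda_2$ trivially.
\end{itemize}

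You instead apply rank--nullity to the surjection $\lambda_1\circ\pi_1|_X$, whose kernel is $\{\mathbf{0}\}\times\Ker\lambda_2$. This gives the exact identity
\[
\nullity(F)-\nullity(\lambda_2)=\dim(\Ima\lambda_1\cap\Ima\lambda_2)=\dim\pi_1(X)=\dim U_1-(\rank F-\rank\lambda_2).
\]
All four conditions then say ``$N\ge d$'' for one and the same number $N$.

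Your route is more informative. The equivalences hold for all $d$ at once, and the identity is precisely the exact sequence $0\to\Ker\mu\to X\to\pi_1(X)\to 0$ that the paper only writes down after Lemma~\ref{lem:the-kernel-method}. The paper's chain, by contrast, is a sequence of one-line inequalities. Its (iv)$\Rightarrow$(i) step also explicitly exhibits the large subspace of $\Ker F$ built from $X'$. Finally, you give an explicit construction of $X'$ in (iii)$\Rightarrow$(iv), which the paper only asserts.
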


\begin{proof}
    (i) $\implies$ (ii)
    We have
    \begin{align*}
           \rank(F) - \rank(\lambda_2)
           &=\dim(U_1) + \dim(U_2) -\nullity(F)  - (\dim(U_2) -\nullity(\lambda_2))\\
           &=\dim(U_1) -\nullity(F)  +\nullity(\lambda_2)\\
           &\leq \dim(U_1)-d.
    \end{align*}

    (ii) $\implies$ (iii) 
    Since $\lambda_1$ is injective, we have
    \begin{align*}
         \dim\Ima(\lambda_1) \cap \Ima(\lambda_2)
        &= \dim\Ima(\lambda_1) +\dim\Ima(\lambda_2) -\dim(\Ima(\lambda_2)+\Ima(\lambda_1))\\
        &= \dim\Ima(\lambda_1) +\dim\Ima(\lambda_2) -\dim\Ima(F)\\
        &= \rank(\lambda_1) +\rank(\lambda_2) -\rank(F)\\
        &\geq \rank(\lambda_1) + d - \dim(U_1) = d.
    \end{align*}

    (iii) $\implies$ (iv)
    Since $\lambda_1$ is injective and using Equation~\eqref{eq:imagelambda1capimagelamda2},
    we have
    \[
        \dim \pi_1(X) 
        = \dim \lambda_1 \circ \pi_1(X) 
        = \dim \Ima(\lambda_1) \cap \Ima(\lambda_2) \geq d.
    \]
    Thus, there exists a $d$-dimensional vector space $X'\subseteq X$ such
    that $\dim(\pi_1(X'))=d$.

    (iv) $\implies$ (i)
    We have $X'\subset X= \Ker(F)$.
    Since $\lambda_1$ is injective, we have
    \[
        d
        =\dim(\pi_1(X'))
        =\dim(\lambda_1\circ\pi_1(X'))
        =\dim(\lambda_2\circ\pi_2(X')).
    \]
    Thus $\dim(\pi_2(X'))= d$,
    so that $\pi_2(X')\cap \Ker(\lambda_2)=\{\mathbf{0}\}$.
    We have $\mathbf{0}\times\Ker(\lambda_2)\subseteq\Ker(F)$,
    so we conclude that
    \begin{align*}
       \nullity(F) 
        &= \dim\Ker(F) 
        \geq \dim(\mathbf{0}\times\Ker(\lambda_2) + X')\\
        &\geq \dim\pi_2(\mathbf{0}\times\Ker(\lambda_2) + X')\\
        &=    \dim(\Ker(\lambda_2) + \pi_2(X'))\\
        &=    \dim(\Ker(\lambda_2)) + \dim(\pi_2(X'))
             -\dim(\Ker(\lambda_2) \cap \pi_2(X'))\\
        &=    \dim(\Ker(\lambda_2)) + \dim(\pi_2(X')) - 0
         = \nullity(\lambda_2) + d.
    \end{align*}
\end{proof}

\subsection{Sets of Wang tiles encoded as linear transformations}

In what follows, we encode a set of Wang tiles
as a pair of linear transformations.
More precisely, to every set 
$\Tcal\subset I\times J\times I\times J$
of Wang tiles with horizontal and vertical stripes
with partitions $I=I_0\cup I_1$ and $J=J_0\cup J_1$,
we construct two linear transformations.

One linear transformation is $\sigma:\R^\Scal \to \R^{\Tcal}$,
encoding the stripe type 
(a value in the set $\Scal = \{\square, \boxbar, \boxminus,\boxplus\}$)
of every tile is defined as follows.
For a stripe type $s\in \Scal$ and tile $\tau\in\Tcal$, define $\sigma(\delta_s)(\tau) =
\delta_s(\text{stripe type}(\tau))$. 
Note that if the partition 
$ \Tcal = \Tcal_\square \cup \Tcal_\boxbar \cup \Tcal_\boxminus \cup \Tcal_\boxplus$
is such that each of the four subsets is non-empty (which we assume from here on),
then the map $\sigma$ is injective.

Another linear transformation is $\mu : \R^{I \sqcup J} \to \R^\Tcal$ is defined as follows. 
For an edge label $e \in I\sqcup J$, $\mu$ maps $\delta_e$ to the function sending a tile $\tau = (a,b,c,d)$ to $\delta_e(a) + \delta_e(b) - \delta_e(c) - \delta_e(d)$. 
Note that $\mu$ is not a bijective encoding of the set $\Tcal$ as it
loses information when the left and right color of a tile is the same or when
the bottom and top color of a tile is the same, in which case the corresponding
image is zero.

As in Section~\ref{sec:pullback-of-linear-transformations},
let $X\subset\R^\Scal\times\R^{I \sqcup J}$ be the 
kernel of $\mu \circ \pi_2 - \sigma \circ \pi_1$.
We have the following commutative diagram:
\[
\begin{tikzcd}
\R^{I \sqcup J} \arrow[r, "\mu"]  & \R^{\Tcal} \\
X \arrow[u, "\pi_2"]\arrow[r, "\pi_1"] & \R^\Scal \arrow[u, "\sigma"]
\end{tikzcd}
\]

We denote the canonical basis of
$\R^\Scal$,
$\R^\Tcal$ and
$\R^{I\sqcup J}$ 
as 
$\{\delta_s\}_{s\in\Scal}$,
$\{\delta_\tau\}_{\tau\in\Tcal}$ and
$\{\delta_e\}_{e\in I\sqcup J}$, respectively.
Using these canonical basis,
we can represent
the linear transformations $\mu$ and $\sigma$ 
as matrices:
$[\mu]=\Mcal_\Tcal$ and $[\sigma]=S_\Tcal$.
With this notation, the linear transformation 
$F=\mu \circ \pi_2 - \sigma \circ \pi_1$
is represented as the augmented matrix
\[
    [F] = (-S_\Tcal|\Mcal_\Tcal).
\]

The matrix $\Mcal_\Tcal$ has $\#\Tcal$ rows and $\#I + \#J$ columns
with entries in the set $\{-1,0,1\}$
while the matrix $S_\Tcal$ is made of $\#\Tcal$ rows and $\#\Scal=4$ columns
with entries in the set $\{0,1\}$.
Note that the matrix $\Mcal_\Tcal$ is not a bijective encoding of the set $\Tcal$ as it
loses information when the left and right color of a tile is the same or when
the bottom and top color of a tile is the same, in which case the corresponding
entry is zero in the matrix.

Using Lemma~\ref{lem:linear-algebra-nullity-observation-abstract}, we may deduce
the existence of a quadrilateral determined by a set of Wang tiles.

\begin{lemma}\label{lem:the-kernel-method}
Let $\Tcal\subset I\times J\times I\times J$ be a set of Wang tiles.
Suppose that $\Tcal$ has horizontal and vertical stripes with partition
$ \Tcal = \Tcal_\square \cup \Tcal_\boxbar \cup \Tcal_\boxminus \cup \Tcal_\boxplus$
into four non-empty subsets.
Let
$\mu : \R^{I \sqcup J} \to \R^\Tcal$ and $\sigma: \R^\Scal \to \R^{\Tcal}$
defined as above from the set $\Tcal$
and $\Mcal_\Tcal=[\mu]$ and $S_\Tcal=[\sigma]$
be their matrix representations.
The following conditions are equivalent:
\begin{enumerate}[(i)]
    \item $\nullity(-S_\Tcal|\Mcal_\Tcal) - \nullity(\Mcal_\Tcal) \geq 2$,
    \item $\rank(-S_\Tcal|\Mcal_\Tcal) - \rank(\Mcal_\Tcal) \leq 2$,
    \item $\dim\colspan(\Mcal_\Tcal) \cap \colspan(S_\Tcal)\geq 2$,
    \item there exist matrices $V\in\Z^{2\times\#I}$, $H\in\Z^{2\times\#J}$
        and a rank 2 matrix
        $\ABCD{A_1}{A_2}{B_1}{B_2}{C_1}{C_2}{D_1}{D_2}\in\Z^{2\times4}$,
        such that
            \begin{equation}\label{eq:VHABCD-is-in-the-kernel}
                (-S_\Tcal|\Mcal_\Tcal)
                    \cdot \left(\ABCD{A_1}{A_2}{B_1}{B_2}{C_1}{C_2}{D_1}{D_2}|V|H\right)^T
                =\mathbf{0}^{\#\Tcal\times2},
            \end{equation}
    \item there exist two maps $\vv:I\to\Z^2$ and $\hh:J\to\Z^2$ and there exist
        four points $A, B, C, D\in\Z^2$ such that 
        $(0,0), A, B, C, D$ are non-collinear
        and $\Tcal$ determines the quadrilateral $ABDC$.
\end{enumerate}
\end{lemma}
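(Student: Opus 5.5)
Since each of the four stripe classes is non-empty, $\sigma$ is injective, so Lemma~\ref{lem:linear-algebra-nullity-observation-abstract} applies with $U_1=\R^\Scal$, $U_2=\R^{I\sqcup J}$, $V=\R^\Tcal$, $\lambda_1=\sigma$, $\lambda_2=\mu$ and $d=2$. Conditions (i), (ii), (iii) are exactly conditions (i), (ii), (iii) of that lemma written in matrix form: $[F]=(-S_\Tcal|\Mcal_\Tcal)$, $\rank$ and $\nullity$ of a linear map equal those of its matrix, and $\Ima(\sigma)=\colspan(S_\Tcal)$, $\Ima(\mu)=\colspan(\Mcal_\Tcal)$. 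Their equivalence is therefore immediate. The remaining work is to show (iii) $\iff$ (iv) $\iff$ (v).

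\textbf{(iii) $\implies$ (iv).} By the abstract lemma, (iii) gives a $2$-dimensional subspace $X'\subseteq X=\Ker F$ with $\dim\pi_1(X')=2$. Pick a basis $x^{(1)},x^{(2)}$ of $X'$. Writing $x^{(k)}=(p^{(k)},q^{(k)})\in\R^\Scal\times\R^{I\sqcup J}$, the rows of the $2\times4$ matrix $(A|B|C|D)$ are $p^{(1)},p^{(2)}$, indexed by $\square,\boxminus,\boxbar,\boxplus$. The rows of $(V|H)$ are $q^{(1)},q^{(2)}$. Equation~\eqref{eq:VHABCD-is-in-the-kernel} is the statement $F(x^{(k)})=0$, and the rank-$2$ condition is $\dim\pi_1(X')=2$.

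The one subtle point is integrality. Since $(-S_\Tcal|\Mcal_\Tcal)$ has integer entries, its kernel has a basis of rational vectors. I will argue that $X'$ can be chosen rational: $\pi_1(X)$ is defined over $\Q$, so it has a rational basis, and I lift two rational vectors of $\pi_1(X)$ that are linearly independent to rational vectors of $X$. Lifting through a rational linear system gives rational preimages. Clearing denominators then yields integer matrices.

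\textbf{(iv) $\implies$ (v).} Define $\vv(a)$ as the column of $V$ indexed by $a$, and $\hh(b)$ as the column of $H$ indexed by $b$. Reading Equation~\eqref{eq:VHABCD-is-in-the-kernel} row by row at tile $\tau=(a,b,c,d)$ gives
\[
\vv(a)+\hh(b)-\vv(c)-\hh(d)=
\begin{cases}
A & \text{if }\tau\in\Tcal_\square,\\
B & \text{if }\tau\in\Tcal_\boxminus,\\
C & \text{if }\tau\in\Tcal_\boxbar,\\
D & \text{if }\tau\in\Tcal_\boxplus,
\end{cases}
\]
since row $\tau$ of $S_\Tcal$ selects the column of its stripe type. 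This is exactly Definition~\ref{def:determines-a-quadrilateral}. The rank-$2$ condition says the vectors $A,B,C,D$ span $\R^2$, which is equivalent to $(0,0),A,B,C,D$ not all lying on one line through the origin. This matches the intended reading of ``non-collinear''.

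\textbf{(v) $\implies$ (iv)} is the same dictionary read backwards, and \textbf{(iv) $\implies$ (iii)} follows from the abstract lemma via (iv) there. The two rows of $(A|B|C|D|V|H)$ lie in $\Ker F$ and have linearly independent projections under $\pi_1$, so they span an $X'$ with $\dim\pi_1(X')=2$.

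I expect the main obstacle to be the rationality/integrality step in (iii) $\implies$ (iv), together with a careful check of the collinearity convention in (v). Everything else is bookkeeping of indices.
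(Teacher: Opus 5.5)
Your proposal is correct and follows essentially the paper's route. You apply Lemma~\ref{lem:linear-algebra-nullity-observation-abstract} with $\lambda_1=\sigma$, $\lambda_2=\mu$, $d=2$, encode a basis of the two-dimensional $X'$ as the rows of $(A|B|C|D|V|H)$, and read off (iv)$\iff$(v) as a dictionary in which rank~$2$ corresponds to non-collinearity of $(0,0),A,B,C,D$. The one addition is your argument for obtaining \emph{integer} matrices in (iv): take a rational basis of $X$ and clear denominators. The paper leaves this step implicit, and your argument for it is sound.
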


\begin{proof}
    Using $U_1=\R^\Scal$,
    $U_2=\R^{I \sqcup J}$,
    $V=\R^\Tcal$,
    $\lambda_1=\sigma$ and
    $\lambda_2=\mu$,
    it follows from Lemma~\ref{lem:linear-algebra-nullity-observation-abstract}
    that (i), (ii), (iii) and (iv) are equivalent,
    since $\dim(U_1)=\dim(\R^\Scal)=4$.

    Note that a base of the 2-dimensional vector space $X'$ can be described
    by the two lines of a $2\times (4+\#I+\#J)$ matrix
                    $\left(\ABCD{A_1}{A_2}{B_1}{B_2}{C_1}{C_2}{D_1}{D_2}|V|H\right)$.
    The matrix
    $\ABCD{A_1}{A_2}{B_1}{B_2}{C_1}{C_2}{D_1}{D_2}$
    has rank 2 if and only if $\dim(\pi_1(X'))=2$.
    Equation~\eqref{eq:VHABCD-is-in-the-kernel} holds if and only if $X'\subseteq \Ker F=X$.

    (iv) $\iff$ (v)
    The two maps $\vv:I\to\Z^2$ and $\hh:J\to\Z^2$ and the
    four points $A, B, C, D\in\Z^2$ 
    can be encoded into matrices
    $V\in\Z^{2\times\#I}$, $H\in\Z^{2\times\#J}$
    and $\ABCD{A_1}{A_2}{B_1}{B_2}{C_1}{C_2}{D_1}{D_2}\in\Z^{2\times4}$, and conversely, such matrices determine the maps and points.
    Equation~\eqref{eq:VHABCD-is-in-the-kernel} is satisfied if and only if the set $\Tcal$ determines the quadrilateral $ABDC$.
    Also, the matrix
    $\ABCD{A_1}{A_2}{B_1}{B_2}{C_1}{C_2}{D_1}{D_2}$
    having rank 2 is equivalent to $(0,0), A, B, C, D$ being non-collinear.
\end{proof}

In what follows, we suppose that the matrix $\Mcal_\Tcal$ is constructed such that 
the first $\#I_0$ columns from left to right are associated with the vertical
labels in the set $I_0$,
the next $\#I_1$ columns are associated with the vertical labels in the set $I_1$,
the next $\#J_0$ columns are associated with the horizontal labels in the set $J_0$
and
the last $\#J_1$ columns are associated with the horizontal labels in the set $J_1$.
Also, we assume that the four columns of the matrix $S_\Tcal$
are associated with the stripe types in the set
$\{\square, \boxminus, \boxbar, \boxplus\}$ in that order.

A nice consequence of Lemma~\ref{lem:the-kernel-method} is that it provides
an algorithm based on linear algebra 
to find a quadrilateral in order to apply 
Lemma~\ref{lem:doubly-periodic-densities-satisfy-equation}
and Theorem~\ref{thm:not-periodic}.
Indeed, let $\eta: \Ker\mu\to X: w \mapsto (0,w)$ so that the following is an exact sequence:
\[
    \begin{tikzcd}
        0           \arrow[r]
        & \Ker\mu   \arrow[r, "\eta"] 
        & X         \arrow[r, "\pi_1"] 
        & \pi_1(X)  \arrow[r] %
        & 0.
    \end{tikzcd}
\]
The sequence of linear maps being exact, we have the isomorphism
$\pi_1(X) \simeq X/\eta(\Ker\mu)$.
Thus, in practice, to compute a $d$-dimensional subspace 
$X'\subseteq X$ such that $\dim(\pi_1(X'))=d$
as in (iv) of Lemma
\ref{lem:linear-algebra-nullity-observation-abstract}, it is convenient to find
the orthogonal complement of $\eta(\Ker\mu) =\{\mathbf{0}\} \times \Ker(\mu)$ 
in $X$ and then pick a $d$-dimensional
subspace. Translated into the setting of Lemma \ref{lem:the-kernel-method} this
amounts to finding a matrix $K$ whose rows are a basis for the nullspace of
$M_\Tcal$, and then computing the nullspace of 
$\left(\begin{smallmatrix}
    -S_\Tcal & M_\Tcal\\ 0 & K
 \end{smallmatrix}\right)$, 
since this consists of elements in $X=\Ker(F) = \nullspace(-S_\Tcal | M_\Tcal)$
which are orthogonal to $(0 | K)$.

The matrix $K$ contains at least four rows which can be defined easily as follows.
Indeed, the following four vectors
\[
    k_{I_0} = \sum_{i\in I_0} e_{i},\qquad
    k_{I_1} = \sum_{i\in I_1} e_{i},\qquad
    k_{J_0} = \sum_{i\in J_0} e_{i},\qquad
    k_{J_1} = \sum_{i\in J_1} e_{i}
\]
are in the right kernel of the matrix $\Mcal_\Tcal$.
In other words, we have
$\Mcal_\Tcal k_{I_0} =
 \Mcal_\Tcal k_{I_1} =
 \Mcal_\Tcal k_{J_0} =
 \Mcal_\Tcal k_{J_1} =0$,
since the left and right labels of a tile belongs to the same subset $I_a$ for some $a\in\{0,1\}$
and the top and bottom labels of a tile belongs to the same subset $J_a$ for some $a\in\{0,1\}$.

Let $K_1=\left(
\begin{array}{cccc}
k_{I_0} &
k_{I_1} &
k_{J_0} &
k_{J_1}
\end{array}\right)^T$ be a  $4\times(\#I + \#J)$ matrix. 
Since the rows of $K_1$ are in the right kernel of $\Mcal_\Tcal$, we have that
$\nullity(\Mcal_\Tcal) \geq 4$.
If $\nullity(\Mcal_\Tcal) = 4$, then we can choose $K=K_1$.
If $\nullity(\Mcal_\Tcal) > 4$, then
we can choose $K$ so that the first four rows are $K_1$, and write $K = \left(\begin{smallmatrix}
K_1\\ K_2
 \end{smallmatrix}\right)$
 where $K_2$ is a basis of the nullspace of $\left(\begin{smallmatrix}
\Mcal\\ K_1
 \end{smallmatrix}\right)$.
 This is needed in Example~\ref{example:kernelmethod-Penrose} below when
 dealing with the encoding of Penrose tilings into 24 Wang tiles.

If the columns of $S_\Tcal$ are linearly independent and also linearly
independent with the columns of $\Mcal_\Tcal$, then
$\nullity(-S_\Tcal|\Mcal_\Tcal) = \nullity(\Mcal_\Tcal)$.
But, if for some reason we have $\nullity(-S_\Tcal|\Mcal_\Tcal) - \nullity(\Mcal_\Tcal) \geq 2$,
then Lemma~\ref{lem:the-kernel-method} applies and this hypothesis is verified
in the examples that follow later in this section.

As explained in the following remark, we can use the matrix $K_1$ to make the entries
of $V$ and $H$ become nonnegative.

\begin{remark}
Using the rows of the matrix $K_1$,
it is possible to find a matrix $P\in\Z^{2\times 4}$
such that
\[
\left(\ABCD{A_1}{A_2}{B_1}{B_2}{C_1}{C_2}{D_1}{D_2}|V'|H'\right)
=
    \left(\ABCD{A_1}{A_2}{B_1}{B_2}{C_1}{C_2}{D_1}{D_2}|V|H\right) 
    + \left(\ABCD{0}{0}{0}{0}{0}{0}{0}{0}|PK_1\right) 
\]
where both submatrices $V'\in\N^{2\times\#I}$ and 
$H'\in\N^{2\times\#J}$ have nonnegative entries.
Also, we can subtract the vectors in the set
$\{k_{I_0},k_{I_1},k_{J_0},k_{J_1}\}$ to get the entries minimal
with at least one entry equal to zero in each subset of coordinates
$\{I_0,I_1,J_0,J_1\}$.
Since the rows of $K_1$ are in the right kernel of $\Mcal_\Tcal$,
    this matrix is also in the right kernel of $(-S_\Tcal|\Mcal_\Tcal)$:
\begin{equation}\label{eq:V'H'ABCD-is-in-the-kernel}
(-S_\Tcal|\Mcal_\Tcal)\cdot 
\left(\ABCD{A_1}{A_2}{B_1}{B_2}{C_1}{C_2}{D_1}{D_2}|V'|H'\right)
=\mathbf{0}^{\#\Tcal\times2}.
\end{equation}
The columns of $V'$ define some map $\vv:I\to\N^2$ and
the columns of $H'$ define some map $\hh:J\to\N^2$
whose image are vectors with nonnegative entries.
\end{remark}

\begin{remark}
    If $\nullity(\Mcal_\Tcal)>4$, we claim that any row of $K_2$ can be used to identify at least one pair of edge colours of the same stripe type. Let $\ell$ be a row in $K_2$. Since it is not in the span of $ k_{I_0}, k_{I_1},k_{J_0}, k_{J_1}$, we can find distinct edges $e_1,e_2$ of the same class with $\ell(e_1) \neq \ell(e_2)$. We are free to modify the matrix $(V|H)$ by adding multiples of $\ell$ to any row, so we can use this to remove differences in the columns corresponding to $e_1$ and $e_2$.
\end{remark}

In the remainder of this section,
we apply this method to many previously known examples.
In all examples except one,
the nullity of the matrix $\Mcal_\Tcal$ is 4
and that of the matrix $(-S_\Tcal|\Mcal_\Tcal)$ is 6,
see Table~\ref{table:kernel-method-statistics}.
Only in the Penrose example do we have
$\nullity(\Mcal_\Tcal)=6$
and $\nullity(-S_\Tcal|\Mcal_\Tcal)=8$.

\begin{table}
\[
\begin{array}{l|cccccc}
    \text{Example} & \#\Tcal           & \#I & \#J 
    & \nullity(\Mcal_\Tcal) 
    & \nullity(-S_\Tcal|\Mcal_\Tcal)\\
    \hline
\text{Penrose 24 Wang tiles}   & 24 & 10 & 10  & 6 & 8\\
\text{Ammann 16 Wang tiles}    & 16 & 6  & 6   & 4 & 6\\
\text{19 self-similar example} & 19 & 10 & 6   & 4 & 6\\
\text{16 self-similar example} & 16 & 8  & 6   & 4 & 6\\
\text{$T_7$ Jeandel-Rao}       & 20 & 13 & 4   & 4 & 6 \\
\text{Metallic mean Wang tiles}& (n+3)^2 & 3(n+1) & 3(n+1) & 4 & 6\\
\end{array}
\]
\caption{Size and nullity of matrices encoding the sets of Wang tiles.}
\label{table:kernel-method-statistics}
\end{table}

\subsection{The strategy applied to Ammann and Penrose sets of Wang tiles}

In the next two examples,
we present how Lemma~\ref{lem:the-kernel-method}
can be applied to Ammann and Penrose Wang tiles
considered in the introduction and in 
Section~\ref{section:24 Penrose}.

\begin{example}
For the Ammann 16 tiles ordered as in 
    Corollary~\ref{cor:Ammann-non-periodic},
    we have the following matrices\\[5mm]
    \[
(-S|\Mcal)=
\begin{pNiceArray}{rrrr|rr|rrrr|rr|rrrr}[small,first-row,first-col]
& \square & \boxminus & \boxbar & \boxplus & 1 & 2 & 3 & 4 & 5 & 6 & 1 & 2 & 3 & 4 & 5 & 6 \\
\tau_{0} & -1 & 0 & 0 & 0 & 1 & -1 & 0 & 0 & 0 & 0 & 1 & -1 & 0 & 0 & 0 & 0 \\
\tau_{1} & 0 & 0 & 0 & -1 & 0 & 0 & 1 & -1 & 0 & 0 & 0 & 0 & 1 & -1 & 0 & 0 \\
\tau_{2} & 0 & 0 & 0 & -1 & 0 & 0 & 0 & 1 & -1 & 0 & 0 & 0 & 0 & 1 & -1 & 0 \\
\tau_{3} & 0 & 0 & 0 & -1 & 0 & 0 & -1 & 0 & 0 & 1 & 0 & 0 & -1 & 0 & 0 & 1 \\
\tau_{4} & 0 & 0 & 0 & -1 & 0 & 0 & 0 & 0 & 0 & 0 & 0 & 0 & 1 & 0 & -1 & 0 \\
\tau_{5} & 0 & 0 & 0 & -1 & 0 & 0 & 0 & -1 & 0 & 1 & 0 & 0 & 0 & 0 & 0 & 0 \\
\tau_{6} & 0 & 0 & 0 & -1 & 0 & 0 & 1 & 0 & -1 & 0 & 0 & 0 & 0 & 0 & 0 & 0 \\
\tau_{7} & 0 & 0 & 0 & -1 & 0 & 0 & 0 & 0 & 0 & 0 & 0 & 0 & 0 & -1 & 0 & 1 \\
\tau_{8} & 0 & -1 & 0 & 0 & 0 & 0 & -1 & 0 & 1 & 0 & -1 & 1 & 0 & 0 & 0 & 0 \\
\tau_{9} & 0 & -1 & 0 & 0 & 0 & 0 & 0 & 1 & 0 & -1 & -1 & 1 & 0 & 0 & 0 & 0 \\
\tau_{10} & 0 & -1 & 0 & 0 & 0 & 0 & 0 & -1 & 1 & 0 & 0 & 0 & 0 & 0 & 0 & 0 \\
\tau_{11} & 0 & -1 & 0 & 0 & 0 & 0 & 1 & 0 & 0 & -1 & 0 & 0 & 0 & 0 & 0 & 0 \\
\tau_{12} & 0 & 0 & -1 & 0 & -1 & 1 & 0 & 0 & 0 & 0 & 0 & 0 & 0 & 1 & 0 & -1 \\
\tau_{13} & 0 & 0 & -1 & 0 & -1 & 1 & 0 & 0 & 0 & 0 & 0 & 0 & -1 & 0 & 1 & 0 \\
\tau_{14} & 0 & 0 & -1 & 0 & 0 & 0 & 0 & 0 & 0 & 0 & 0 & 0 & 1 & 0 & 0 & -1 \\
\tau_{15} & 0 & 0 & -1 & 0 & 0 & 0 & 0 & 0 & 0 & 0 & 0 & 0 & 0 & -1 & 1 & 0 \\
\CodeAfter
\OverBrace[shorten,yshift=3pt]{0-5}{0-6}{I_0}
\OverBrace[shorten,yshift=3pt]{0-7}{0-10}{I_1}
\OverBrace[shorten,yshift=3pt]{0-11}{0-12}{J_0}
\OverBrace[shorten,yshift=3pt]{0-13}{0-16}{J_1}
\end{pNiceArray}
\]
satisfying
$\rank(-S|\Mcal)=10$,
$\rank(\Mcal)=8$,
$\nullity(-S|\Mcal)=6$ and
$\nullity(\Mcal)=4$.
The right kernel of $\Mcal$ is generated by the rows of the matrix $K$ where
\[
(0|K) =
\footnotesize
\def\arraycolsep{1mm}
\left(\begin{array}{rrrr|rr|rrrr|rr|rrrr}
0 & 0 & 0 & 0 & 1 & 1 & 0 & 0 & 0 & 0 & 0 & 0 & 0 & 0 & 0 & 0 \\
0 & 0 & 0 & 0 & 0 & 0 & 1 & 1 & 1 & 1 & 0 & 0 & 0 & 0 & 0 & 0 \\
0 & 0 & 0 & 0 & 0 & 0 & 0 & 0 & 0 & 0 & 1 & 1 & 0 & 0 & 0 & 0 \\
0 & 0 & 0 & 0 & 0 & 0 & 0 & 0 & 0 & 0 & 0 & 0 & 1 & 1 & 1 & 1
\end{array}\right)
\]
The right kernel of 
$\left(\begin{smallmatrix}
-S & \Mcal\\ 0 & K
 \end{smallmatrix}\right)$
has dimension 2
and is generated by the rows of 
\[
\left(\ABCD{A_1}{A_2}{B_1}{B_2}{C_1}{C_2}{D_1}{D_2}|V|H\right)
=
\footnotesize
\def\arraycolsep{1mm}
\left(\begin{array}{rrrr|rr|rrrr|rr|rrrr}
2 & 0 & 2 & -2 & 0 & 0 & -1 & 1 & 1 & -1 & 1 & -1 & 0 & 0 & 2 & -2 \\
0 & 2 & -2 & 0 & 1 & -1 & 1 & -1 & 1 & -1 & -1 & 1 & -1 & 1 & -1 & 1
\end{array}\right)
\]
from which we recover the quadrilateral $ABDC$ where
$\ABCD{A_1}{A_2}{B_1}{B_2}{C_1}{C_2}{D_1}{D_2}
=
\left(\begin{smallmatrix}
2 & 0 & 2 & -2 \\
0 & 2 & -2 & 0
\end{smallmatrix}\right)
$ and the maps $\vv$ and $\hh$.
The codomain of the maps $\vv$ and $\hh$ can be made nonnegative
by adding multiples of the first four rows of $(0|K)$ to them.
After dividing by the gcd of the entries of each row, we obtain: 
\[
\footnotesize
\def\arraycolsep{1mm}
\left(\ABCD{A_1}{A_2}{B_1}{B_2}{C_1}{C_2}{D_1}{D_2}|V'|H'\right)
=
\left(\begin{array}{rrrr|rr|rrrr|rr|rrrr}
1 & 0 & 1 & -1 & 0 & 0 & 0 & 1 & 1 & 0 & 1 & 0 & 1 & 1 & 2 & 0 \\
0 & 1 & -1 & 0 & 1 & 0 & 1 & 0 & 1 & 0 & 0 & 1 & 0 & 1 & 0 & 1
\end{array}\right)
.\]

\end{example}

\begin{remark}
If one adds the trivial Wang tile
    $(1,1,1,1)$ as a 17-th tile to the Ammann set,
    then we still have $\nullity(\Mcal_\Tcal)=4$
    but the nullity of $(-S_\Tcal|\Mcal_\Tcal)$ decreases by 1 
    to become $\nullity(-S_\Tcal|\Mcal_\Tcal)=5$,
    because the rank of $(-S_\Tcal|\Mcal_\Tcal)$ increases by 1.
    Thus, Lemma~\ref{lem:the-kernel-method}
    does not apply anymore
    and we can't find a rank 2 matrix
    $\ABCD{A_1}{A_2}{B_1}{B_2}{C_1}{C_2}{D_1}{D_2}\in\Z^{2\times4}$
    satisfying Equation~\eqref{eq:VHABCD-is-in-the-kernel}.
\end{remark}

\begin{example}
    \label{example:kernelmethod-Penrose}
For the Penrose 24 tiles given in Theorem~\ref{thm:Penrose-non-periodic},
    we have the following matrices\\[5mm]
    \[
(-S|\Mcal)=
\begin{pNiceArray}{rrrr|rrrr|rrrrrr|rrrr|rrrrrr}[small,first-row,first-col]
& \square & \boxminus & \boxbar & \boxplus & E & F & I & J & A & B & C & D & G & H & E & F & I & J & A & B & C & D & G & H \\
\tau_{0} & 0 & 0 & 0 & -1 & 0 & 0 & 0 & 0 & 1 & 0 & 0 & -1 & 0 & 0 & 0 & 0 & 0 & 0 & 1 & 0 & 0 & -1 & 0 & 0 \\
\tau_{1} & 0 & 0 & 0 & -1 & 0 & 0 & 0 & 0 & 0 & -1 & 1 & 0 & 0 & 0 & 0 & 0 & 0 & 0 & 0 & -1 & 1 & 0 & 0 & 0 \\
\tau_{2} & -1 & 0 & 0 & 0 & 1 & 0 & -1 & 0 & 0 & 0 & 0 & 0 & 0 & 0 & 1 & 0 & -1 & 0 & 0 & 0 & 0 & 0 & 0 & 0 \\
\tau_{3} & -1 & 0 & 0 & 0 & 0 & -1 & 0 & 1 & 0 & 0 & 0 & 0 & 0 & 0 & 0 & -1 & 0 & 1 & 0 & 0 & 0 & 0 & 0 & 0 \\
\tau_{4} & 0 & 0 & 0 & -1 & 0 & 0 & 0 & 0 & 0 & 0 & 1 & 0 & 0 & -1 & 0 & 0 & 0 & 0 & 0 & 0 & 1 & 0 & 0 & -1 \\
\tau_{5} & 0 & 0 & 0 & -1 & 0 & 0 & 0 & 0 & 0 & 0 & 0 & -1 & 1 & 0 & 0 & 0 & 0 & 0 & 0 & 0 & 0 & -1 & 1 & 0 \\
\tau_{6} & 0 & 0 & 0 & -1 & 0 & 0 & 0 & 0 & -1 & 0 & 0 & 0 & 0 & 1 & 0 & 0 & 0 & 0 & -1 & 0 & 0 & 0 & 0 & 1 \\
\tau_{7} & 0 & 0 & 0 & -1 & 0 & 0 & 0 & 0 & 0 & 1 & 0 & 0 & -1 & 0 & 0 & 0 & 0 & 0 & 0 & 1 & 0 & 0 & -1 & 0 \\
\tau_{8} & 0 & 0 & -1 & 0 & -1 & 1 & 0 & 0 & 0 & 0 & 0 & 0 & 0 & 0 & 0 & 0 & 0 & 0 & 0 & 0 & -1 & 0 & 1 & 0 \\
\tau_{9} & 0 & 0 & -1 & 0 & -1 & 1 & 0 & 0 & 0 & 0 & 0 & 0 & 0 & 0 & 0 & 0 & 0 & 0 & 0 & 0 & 0 & 1 & 0 & -1 \\
\tau_{10} & 0 & -1 & 0 & 0 & 0 & 0 & 0 & 0 & 0 & 0 & -1 & 0 & 1 & 0 & -1 & 1 & 0 & 0 & 0 & 0 & 0 & 0 & 0 & 0 \\
\tau_{11} & 0 & -1 & 0 & 0 & 0 & 0 & 0 & 0 & 0 & 0 & 0 & 1 & 0 & -1 & -1 & 1 & 0 & 0 & 0 & 0 & 0 & 0 & 0 & 0 \\
\tau_{12} & 0 & -1 & 0 & 0 & 0 & 0 & 0 & 0 & 0 & 1 & -1 & 0 & 0 & 0 & 0 & -1 & 1 & 0 & 0 & 0 & 0 & 0 & 0 & 0 \\
\tau_{13} & 0 & -1 & 0 & 0 & 0 & 0 & 0 & 0 & -1 & 0 & 0 & 1 & 0 & 0 & 1 & 0 & 0 & -1 & 0 & 0 & 0 & 0 & 0 & 0 \\
\tau_{14} & 0 & 0 & -1 & 0 & 0 & -1 & 1 & 0 & 0 & 0 & 0 & 0 & 0 & 0 & 0 & 0 & 0 & 0 & 0 & 1 & -1 & 0 & 0 & 0 \\
\tau_{15} & 0 & 0 & -1 & 0 & 1 & 0 & 0 & -1 & 0 & 0 & 0 & 0 & 0 & 0 & 0 & 0 & 0 & 0 & -1 & 0 & 0 & 1 & 0 & 0 \\
\tau_{16} & 0 & 0 & 0 & -1 & 0 & 0 & 0 & 0 & 0 & -1 & 0 & 0 & 0 & 1 & 0 & 0 & 0 & 0 & -1 & 0 & 1 & 0 & 0 & 0 \\
\tau_{17} & 0 & 0 & 0 & -1 & 0 & 0 & 0 & 0 & -1 & 0 & 1 & 0 & 0 & 0 & 0 & 0 & 0 & 0 & 0 & -1 & 0 & 0 & 0 & 1 \\
\tau_{18} & 0 & 0 & 0 & -1 & 0 & 0 & 0 & 0 & 0 & 1 & 0 & -1 & 0 & 0 & 0 & 0 & 0 & 0 & 1 & 0 & 0 & 0 & -1 & 0 \\
\tau_{19} & 0 & 0 & 0 & -1 & 0 & 0 & 0 & 0 & 1 & 0 & 0 & 0 & -1 & 0 & 0 & 0 & 0 & 0 & 0 & 1 & 0 & -1 & 0 & 0 \\
\tau_{20} & 0 & -1 & 0 & 0 & 0 & 0 & 0 & 0 & 0 & -1 & 0 & 1 & 0 & 0 & 0 & 1 & 0 & -1 & 0 & 0 & 0 & 0 & 0 & 0 \\
\tau_{21} & 0 & -1 & 0 & 0 & 0 & 0 & 0 & 0 & 1 & 0 & -1 & 0 & 0 & 0 & -1 & 0 & 1 & 0 & 0 & 0 & 0 & 0 & 0 & 0 \\
\tau_{22} & 0 & 0 & -1 & 0 & 0 & 1 & 0 & -1 & 0 & 0 & 0 & 0 & 0 & 0 & 0 & 0 & 0 & 0 & 0 & -1 & 0 & 1 & 0 & 0 \\
\tau_{23} & 0 & 0 & -1 & 0 & -1 & 0 & 1 & 0 & 0 & 0 & 0 & 0 & 0 & 0 & 0 & 0 & 0 & 0 & 1 & 0 & -1 & 0 & 0 & 0 \\
\CodeAfter
\OverBrace[shorten,yshift=3pt]{0-5}{0-8}{I_0}
\OverBrace[shorten,yshift=3pt]{0-9}{0-14}{I_1}
\OverBrace[shorten,yshift=3pt]{0-15}{0-18}{J_0}
\OverBrace[shorten,yshift=3pt]{0-19}{0-24}{J_1}
\end{pNiceArray}
\]
satisfying
$\rank(-S|\Mcal)=16$,
$\rank(\Mcal)=14$,
$\nullity(-S|\Mcal)=8$ and
$\nullity(\Mcal)=6$.
The right kernel of $\Mcal$ is generated by the rows of the matrix $K$ where
\[
(0|K) =
\footnotesize
\def\arraycolsep{1mm}
\left(\begin{array}{rrrr|rrrr|rrrrrr|rrrr|rrrrrr}
0 & 0 & 0 & 0 & 1 & 1 & 1 & 1 & 0 & 0 & 0 & 0 & 0 & 0 & 0 & 0 & 0 & 0 & 0 & 0 & 0 & 0 & 0 & 0 \\
0 & 0 & 0 & 0 & 0 & 0 & 0 & 0 & 1 & 1 & 1 & 1 & 1 & 1 & 0 & 0 & 0 & 0 & 0 & 0 & 0 & 0 & 0 & 0 \\
0 & 0 & 0 & 0 & 0 & 0 & 0 & 0 & 0 & 0 & 0 & 0 & 0 & 0 & 1 & 1 & 1 & 1 & 0 & 0 & 0 & 0 & 0 & 0 \\
0 & 0 & 0 & 0 & 0 & 0 & 0 & 0 & 0 & 0 & 0 & 0 & 0 & 0 & 0 & 0 & 0 & 0 & 1 & 1 & 1 & 1 & 1 & 1 \\
0 & 0 & 0 & 0 & -1 & 1 & 0 & 0 & 1 & -1 & 0 & 0 & 2 & -2 & 1 & -1 & 0 & 0 & -1 & 1 & 0 & 0 & -2 & 2 \\
0 & 0 & 0 & 0 & 3 & 3 & -3 & -3 & -4 & -4 & 2 & 2 & 2 & 2 & -3 & -3 & 3 & 3 & 4 & 4 & -2 & -2 & -2 & -2
\end{array}\right)
\]
The right kernel of 
$\left(\begin{smallmatrix}
-S & \Mcal\\ 0 & K
 \end{smallmatrix}\right)$
has dimension 2
and is generated by the rows of 
\[
\left(\ABCD{A_1}{A_2}{B_1}{B_2}{C_1}{C_2}{D_1}{D_2}|V|H\right)
=
\footnotesize
\def\arraycolsep{1mm}
\left(\begin{array}{rrrr|rrrr|rrrrrr|rrrr|rrrrrr}
2 & 4 & -2 & -2 & 1 & -1 & -2 & 2 & 0 & 0 & -3 & 3 & 1 & -1 & 0 & 0 & 1 & -1 & 1 & -1 & 0 & 0 & 0 & 0 \\
0 & 6 & -6 & 0 & 1 & -1 & -3 & 3 & -1 & 1 & -3 & 3 & 1 & -1 & -1 & 1 & 3 & -3 & 1 & -1 & 3 & -3 & -1 & 1
\end{array}\right)
\]
from which we recover the quadrilateral $ABDC$ where
$\ABCD{A_1}{A_2}{B_1}{B_2}{C_1}{C_2}{D_1}{D_2}
=
\left(\begin{smallmatrix}
2 & 4 & -2 & -2 \\
0 & 6 & -6 & 0
\end{smallmatrix}\right)
$ and the maps $\vv$ and $\hh$.
The codomain of the maps $\vv$ and $\hh$ can be made nonnegative
by adding multiples of the first four rows of $(0|K)$ to them.
After dividing by the gcd of the entries of each row, we obtain: 
\[
\footnotesize
\def\arraycolsep{1mm}
\left(\ABCD{A_1}{A_2}{B_1}{B_2}{C_1}{C_2}{D_1}{D_2}|V'|H'\right)
=
\left(\begin{array}{rrrr|rrrr|rrrrrr|rrrr|rrrrrr}
2 & 4 & -2 & -2 & 3 & 1 & 0 & 4 & 3 & 3 & 0 & 6 & 4 & 2 & 1 & 1 & 2 & 0 & 2 & 0 & 1 & 1 & 1 & 1 \\
0 & 3 & -3 & 0 & 2 & 1 & 0 & 3 & 1 & 2 & 0 & 3 & 2 & 1 & 1 & 2 & 3 & 0 & 2 & 1 & 3 & 0 & 1 & 2
\end{array}\right)
.\]

\end{example}

In the next sections, we use the method provided by
Lemma~\ref{lem:doubly-periodic-densities-satisfy-equation}
to conclude the non-periodicity of other known sets of Wang tiles.

\subsection{The 19 self-similar example}

In this section, we consider the set $\Ucal$ of 19 Wang tiles from \cite{MR3978536}.

\begin{theorem}
    The set $\Ucal$ 
    \begin{center}
    \includegraphics{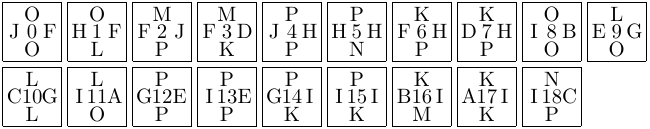}
    \end{center}
    of 19 Wang tiles admits no periodic tiling.
\end{theorem}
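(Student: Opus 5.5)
The plan follows the proofs of Theorem~\ref{thm:Penrose-non-periodic} and Theorem~\ref{thm:metallic-mean-tiles-non-periodic}: find a quadrilateral $ABDC$ determined by $\Ucal$ with integer vertices, then check the hypotheses of Theorem~\ref{thm:not-periodic}.

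\textbf{Step 1: the stripe partition.} From the labels of the 19 tiles I read off the partitions $I=I_0\cup I_1$ (with $\#I=10$) and $J=J_0\cup J_1$ (with $\#J=6$). Then I check that each tile has its left and right labels in the same class, and its top and bottom labels in the same class. This gives the partition $\Ucal=\Ucal_\square\cup\Ucal_\boxbar\cup\Ucal_\boxminus\cup\Ucal_\boxplus$ into four non-empty subsets.

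\textbf{Step 2: find $\vv$, $\hh$ and $ABDC$.} I apply the kernel method of Lemma~\ref{lem:the-kernel-method}.
\begin{enumerate}
\item Build the matrix $(-S_\Ucal|\Mcal_\Ucal)$, of size $19\times(4+16)$.
\item Confirm the nullities from Table~\ref{table:kernel-method-statistics}: $\nullity(\Mcal_\Ucal)=4$ and $\nullity(-S_\Ucal|\Mcal_\Ucal)=6$. Their difference is $2$, so condition (i) of Lemma~\ref{lem:the-kernel-method} holds.
\item Since $\nullity(\Mcal_\Ucal)=4$, the matrix $K_1$ built from $k_{I_0},k_{I_1},k_{J_0},k_{J_1}$ spans $\Ker\Mcal_\Ucal$.
\item Compute the $2$-dimensional right kernel of $\left(\begin{smallmatrix}-S_\Ucal&\Mcal_\Ucal\\0&K_1\end{smallmatrix}\right)$. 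Its two basis rows, scaled to integers, give the matrix $\left(\ABCD{A_1}{A_2}{B_1}{B_2}{C_1}{C_2}{D_1}{D_2}|V|H\right)$.
\item Shift by multiples of the rows of $K_1$ so that $V$ and $H$ have nonnegative entries.
\end{enumerate}
In the written proof I would then list $\vv$ and $\hh$ explicitly. For each of the four classes, I would verify that $U M_\tau(1,1,-1,-1)^T$ is constant, exactly as in the Penrose proof.

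\textbf{Step 3: normalize the basis.} The kernel basis is only determined up to $GL_2$. The hypotheses of Theorem~\ref{thm:not-periodic} require the first condition $\det(A|D)=0$, which is not invariant under change of basis. I therefore choose the basis row combination, by left multiplication with an integer $2\times2$ matrix $P$, so that $A$ and $D$ become collinear with the origin.

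The remaining conditions behave better under this change. Under $P$, the determinants $\det(B|D)$ and $\det(C|D)$ both scale by $\det P$, so the condition $\det(B|D)=-\det(C|D)\neq0$ is invariant. The discriminant condition is coordinatewise, so it does depend on $P$. I expect, however, that the coordinates can be chosen so that $\Delta=(B_i+C_i)^2-4A_iD_i$ is a positive non-square. Here the natural target is $\Delta=5$ times a square, since the densities in this self-similar example are presumably related to the golden ratio.

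\textbf{Step 4: conclude.} Once the three conditions are checked, Theorem~\ref{thm:not-periodic} yields that $\Ucal$ admits no periodic tiling.

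\textbf{Main obstacle.} The hard part is Steps 2–3: the linear-algebra computation itself, and especially confirming that some basis makes $A$, $O$, $D$ collinear. If no such normalization exists, Lemma~\ref{lem:det-conditions-implies-alpha-equal-beta-are-quadratic} does not apply. In that case I would fall back on Lemma~\ref{lem:doubly-periodic-densities-satisfy-equation}: eliminate one variable from the two coordinate equations of \eqref{alpha_beta_relations}, and show directly that every real solution $(\alpha,\beta)\in[0,1]^2$ has an irrational coordinate. That again contradicts double periodicity.
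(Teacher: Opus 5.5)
Your plan is the paper's proof. The paper runs the kernel method on $\Ucal$ and obtains $ABCD$-rows $(2,0,-2,-2)$ and $(0,4,-4,0)$. It rescales these to $A=\smallvector{1}{0}$, $B=\smallvector{0}{1}$, $C=\smallvector{-1}{-1}$, $D=\smallvector{-1}{0}$, checks $\det(A\,|\,D)=0$, $\det(B\,|\,D)=1=-\det(C\,|\,D)$ and $(B_1+C_1)^2-4A_1D_1=5$, and applies Theorem~\ref{thm:not-periodic}.

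One claim in your Step~3 is false, although it does no damage here. The condition $\det(A\,|\,D)=0$ \emph{is} invariant under the change of basis. Left multiplication by $P$ sends $A,B,C,D$ to $PA,PB,PC,PD$, and $\det(PA\,|\,PD)=\det(P)\det(A\,|\,D)$.

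Intrinsically, $\dim\pi_1(X)=\nullity(-S|\Mcal)-\nullity(\Mcal)=2$. So any two rank-2 quadrilaterals determined by $\Ucal$ differ by an invertible linear map. Collinearity of $A$, $O$, $D$ therefore holds for every basis or for none, and no choice of $P$ can produce it. Your normalization step is vacuous, and what you call the main obstacle is a single check on whatever basis you compute; the fallback is unnecessary.

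The discriminant condition is likewise essentially basis-free. Once $A=aD$ and $B+C=bD$ with $D\neq 0$, one has $(B_i+C_i)^2-4A_iD_i=D_i^2(b^2-4a)$. Whether this is a positive non-square therefore does not depend on $P$ or on the coordinate $i$, as long as $D_i\neq 0$. For $\Ucal$ one gets $a=-1$, $b=1$ and $b^2-4a=5$.
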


\begin{proof}
For the self-similar 19 tiles example, using Lemma~\ref{lem:the-kernel-method}, 
    we have the following matrices\\[5mm]
    \[
(-S|\Mcal)=
\begin{pNiceArray}{rrrr|rrrrrr|rrrr|rrrr|rr}[small,first-row,first-col]
& \square & \boxminus & \boxbar & \boxplus & A & B & C & E & G & I & D & F & H & J & K & M & N & P & L & O \\
\tau_{0} & 0 & 0 & 0 & -1 & 0 & 0 & 0 & 0 & 0 & 0 & 0 & 1 & 0 & -1 & 0 & 0 & 0 & 0 & 0 & 0 \\
\tau_{1} & 0 & 0 & 0 & -1 & 0 & 0 & 0 & 0 & 0 & 0 & 0 & 1 & -1 & 0 & 0 & 0 & 0 & 0 & -1 & 1 \\
\tau_{2} & 0 & -1 & 0 & 0 & 0 & 0 & 0 & 0 & 0 & 0 & 0 & -1 & 0 & 1 & 0 & 1 & 0 & -1 & 0 & 0 \\
\tau_{3} & 0 & -1 & 0 & 0 & 0 & 0 & 0 & 0 & 0 & 0 & 1 & -1 & 0 & 0 & -1 & 1 & 0 & 0 & 0 & 0 \\
\tau_{4} & 0 & -1 & 0 & 0 & 0 & 0 & 0 & 0 & 0 & 0 & 0 & 0 & 1 & -1 & 0 & 0 & 0 & 0 & 0 & 0 \\
\tau_{5} & 0 & -1 & 0 & 0 & 0 & 0 & 0 & 0 & 0 & 0 & 0 & 0 & 0 & 0 & 0 & 0 & -1 & 1 & 0 & 0 \\
\tau_{6} & 0 & -1 & 0 & 0 & 0 & 0 & 0 & 0 & 0 & 0 & 0 & -1 & 1 & 0 & 1 & 0 & 0 & -1 & 0 & 0 \\
\tau_{7} & 0 & -1 & 0 & 0 & 0 & 0 & 0 & 0 & 0 & 0 & -1 & 0 & 1 & 0 & 1 & 0 & 0 & -1 & 0 & 0 \\
\tau_{8} & 0 & 0 & -1 & 0 & 0 & 1 & 0 & 0 & 0 & -1 & 0 & 0 & 0 & 0 & 0 & 0 & 0 & 0 & 0 & 0 \\
\tau_{9} & 0 & 0 & -1 & 0 & 0 & 0 & 0 & -1 & 1 & 0 & 0 & 0 & 0 & 0 & 0 & 0 & 0 & 0 & 1 & -1 \\
\tau_{10} & 0 & 0 & -1 & 0 & 0 & 0 & -1 & 0 & 1 & 0 & 0 & 0 & 0 & 0 & 0 & 0 & 0 & 0 & 0 & 0 \\
\tau_{11} & 0 & 0 & -1 & 0 & 1 & 0 & 0 & 0 & 0 & -1 & 0 & 0 & 0 & 0 & 0 & 0 & 0 & 0 & 1 & -1 \\
\tau_{12} & -1 & 0 & 0 & 0 & 0 & 0 & 0 & 1 & -1 & 0 & 0 & 0 & 0 & 0 & 0 & 0 & 0 & 0 & 0 & 0 \\
\tau_{13} & -1 & 0 & 0 & 0 & 0 & 0 & 0 & 1 & 0 & -1 & 0 & 0 & 0 & 0 & 0 & 0 & 0 & 0 & 0 & 0 \\
\tau_{14} & -1 & 0 & 0 & 0 & 0 & 0 & 0 & 0 & -1 & 1 & 0 & 0 & 0 & 0 & -1 & 0 & 0 & 1 & 0 & 0 \\
\tau_{15} & -1 & 0 & 0 & 0 & 0 & 0 & 0 & 0 & 0 & 0 & 0 & 0 & 0 & 0 & -1 & 0 & 0 & 1 & 0 & 0 \\
\tau_{16} & -1 & 0 & 0 & 0 & 0 & -1 & 0 & 0 & 0 & 1 & 0 & 0 & 0 & 0 & 1 & -1 & 0 & 0 & 0 & 0 \\
\tau_{17} & -1 & 0 & 0 & 0 & -1 & 0 & 0 & 0 & 0 & 1 & 0 & 0 & 0 & 0 & 0 & 0 & 0 & 0 & 0 & 0 \\
\tau_{18} & -1 & 0 & 0 & 0 & 0 & 0 & 1 & 0 & 0 & -1 & 0 & 0 & 0 & 0 & 0 & 0 & 1 & -1 & 0 & 0 \\
\CodeAfter
\OverBrace[shorten,yshift=3pt]{0-5}{0-10}{I_0}
\OverBrace[shorten,yshift=3pt]{0-11}{0-14}{I_1}
\OverBrace[shorten,yshift=3pt]{0-15}{0-18}{J_0}
\OverBrace[shorten,yshift=3pt]{0-19}{0-20}{J_1}
\end{pNiceArray}
\]
satisfying
$\rank(-S|\Mcal)=14$,
$\rank(\Mcal)=12$,
$\nullity(-S|\Mcal)=6$ and
$\nullity(\Mcal)=4$.
The right kernel of $\Mcal$ is generated by the rows of the matrix $K$ where
\[
(0|K) =
\footnotesize
\def\arraycolsep{1mm}
\left(\begin{array}{rrrr|rrrrrr|rrrr|rrrr|rr}
0 & 0 & 0 & 0 & 1 & 1 & 1 & 1 & 1 & 1 & 0 & 0 & 0 & 0 & 0 & 0 & 0 & 0 & 0 & 0 \\
0 & 0 & 0 & 0 & 0 & 0 & 0 & 0 & 0 & 0 & 1 & 1 & 1 & 1 & 0 & 0 & 0 & 0 & 0 & 0 \\
0 & 0 & 0 & 0 & 0 & 0 & 0 & 0 & 0 & 0 & 0 & 0 & 0 & 0 & 1 & 1 & 1 & 1 & 0 & 0 \\
0 & 0 & 0 & 0 & 0 & 0 & 0 & 0 & 0 & 0 & 0 & 0 & 0 & 0 & 0 & 0 & 0 & 0 & 1 & 1
\end{array}\right)
\]
The right kernel of 
$\left(\begin{smallmatrix}
-S & \Mcal\\ 0 & K
 \end{smallmatrix}\right)$
has dimension 2
and is generated by the rows of 
\[
\left(\ABCD{A_1}{A_2}{B_1}{B_2}{C_1}{C_2}{D_1}{D_2}|V|H\right)
=
\footnotesize
\def\arraycolsep{1mm}
\left(\begin{array}{rrrr|rrrrrr|rrrr|rrrr|rr}
2 & 0 & -2 & -2 & -2 & -2 & 2 & 2 & 0 & 0 & -1 & -1 & 1 & 1 & -1 & -1 & 1 & 1 & 0 & 0 \\
0 & 4 & -4 & 0 & 0 & -4 & 4 & 0 & 0 & 0 & -1 & -1 & 3 & -1 & 0 & 4 & -4 & 0 & -2 & 2
\end{array}\right)
\]
from which we recover the quadrilateral $ABDC$ where
$\ABCD{A_1}{A_2}{B_1}{B_2}{C_1}{C_2}{D_1}{D_2}
=
\left(\begin{smallmatrix}
2 & 0 & -2 & -2 \\
0 & 4 & -4 & 0
\end{smallmatrix}\right)
$ and the maps $\vv$ and $\hh$.
The codomain of the maps $\vv$ and $\hh$ can be made nonnegative
by adding multiples of the first four rows of $(0|K)$ to them.
After dividing by the gcd of the entries of each row, we obtain: 
\[
\footnotesize
\def\arraycolsep{1mm}
\left(\ABCD{A_1}{A_2}{B_1}{B_2}{C_1}{C_2}{D_1}{D_2}|V'|H'\right)
=
\left(\begin{array}{rrrr|rrrrrr|rrrr|rrrr|rr}
1 & 0 & -1 & -1 & 0 & 0 & 2 & 2 & 1 & 1 & 0 & 0 & 1 & 1 & 0 & 0 & 1 & 1 & 0 & 0 \\
0 & 1 & -1 & 0 & 1 & 0 & 2 & 1 & 1 & 1 & 0 & 0 & 1 & 0 & 1 & 2 & 0 & 1 & 0 & 1
\end{array}\right)
.\]

Thus, the set of Wang tiles determines the quadrilateral $ABDC$ satisfying
    \begin{align*}
    \det\left(\begin{smallmatrix} A_1&D_1\\A_2&D_2 \end{smallmatrix}\right)
        &= \det\left(\begin{smallmatrix} 1&-1\\0&0 \end{smallmatrix}\right) =0,\\
    \det\left(\begin{smallmatrix} B_1&D_1\\B_2&D_2 \end{smallmatrix}\right)
        &=\det\left(\begin{smallmatrix} 0&-1\\1&0 \end{smallmatrix}\right)
        =1 \neq0,\\
    \det\left(\begin{smallmatrix} C_1&D_1\\C_2&D_2 \end{smallmatrix}\right)
       &=\det\left(\begin{smallmatrix} -1&-1\\-1&0 \end{smallmatrix}\right)
      =-1=-\det\left(\begin{smallmatrix} B_1&D_1\\B_2&D_2 \end{smallmatrix}\right).
    \end{align*}
The discriminant
$\Delta=(B_1+C_1)^2-4A_1D_1=(0-1)^2+4=5$
is positive and is not a perfect square.
Thus, we conclude from Theorem~\ref{thm:not-periodic} that
every valid configuration over these Wang tiles is non-periodic.
\end{proof}

\subsection{The 16 self-similar example}\label{section:16_self_similar}

In this section, we consider the 16 Wang tiles from \cite{labbe_three_2020}.
It was obtained as a simplification \cite{Lepsova-2024}
of the set of 19 Wang tiles from \cite{MR3978536}
after identification of labels $F$ and $D$ and labels $G$ and $I$.
This simplification can be deduced by noting that the $2 \times (4+ \#I + \#J)$
matrix 
$\left(\ABCD{A_1}{A_2}{B_1}{B_2}{C_1}{C_2}{D_1}{D_2}|V|H\right)$
in the previous example has two duplicate columns in the $I_0$ section,
as well as two in the $I_1$ section. 

\begin{theorem}
    The set $\Zcal$ 
    \begin{center}
    \includegraphics{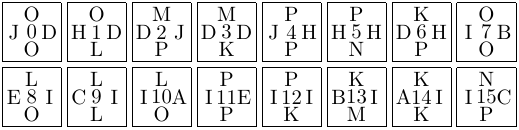}
    \end{center}
    of 16 Wang tiles admits no periodic tiling.
\end{theorem}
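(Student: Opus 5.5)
The plan follows the pattern of the previous subsection and obtains the result as a direct application of Theorem~\ref{thm:not-periodic}. The set $\Zcal$ arises from $\Ucal$ by identifying the labels $F$ with $D$ and $G$ with $I$. In the matrix $\left(\ABCD{A_1}{A_2}{B_1}{B_2}{C_1}{C_2}{D_1}{D_2}|V'|H'\right)$ computed for $\Ucal$, the columns of $V'$ indexed by $G$ and $I$ are both $\smallvector{1}{1}$, and those indexed by $D$ and $F$ are both $\smallvector{0}{0}$. Hence the maps $\vv$ and $\hh$ found for $\Ucal$ factor through the identification. I would therefore define $\vv$ on the vertical labels of $\Zcal$ by the corresponding columns of $V'$: the merged label $D=F$ goes to $\smallvector{0}{0}$, the merged label $G=I$ goes to $\smallvector{1}{1}$, and every other label keeps its column. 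The map $\hh$ is taken unchanged from $H'$.

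The first step is to set up the stripe partition of $\Zcal$, namely $I_0=\{A,B,C,E,G\}$, $I_1=\{D,H,J\}$, $J_0=\{K,M,N,P\}$ and $J_1=\{L,O\}$ (after the merging). I would check that every tile of $\Zcal$ respects this partition. The second step is to verify, tile by tile, that $\vv(a)+\hh(b)-\vv(c)-\hh(d)$ equals
\[
A=\smallvector{1}{0},\quad B=\smallvector{0}{1},\quad C=\smallvector{-1}{-1},\quad D=\smallvector{-1}{0}
\]
according to whether the tile lies in $\Tcal_\square$, $\Tcal_\boxminus$, $\Tcal_\boxbar$ or $\Tcal_\boxplus$. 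This is cleanest in the matrix form $U\cdot M_\tau\cdot\OneOneMinusoneMinusOne$ of Section~\ref{sec:wang-tiles-encoded-as-nx4-matrices}, listing the matrices $M_\tau$ grouped by stripe type as in the proof of Theorem~\ref{thm:Penrose-non-periodic}. If every tile of $\Zcal$ is the image of a tile of $\Ucal$ under the identification, this check is automatic: the encoding of each tile is unchanged, so the equation holds because it held for $\Ucal$. I would nevertheless confirm this against the picture. If $\Zcal$ contains a tile that is not such an image, or if its labelling differs, I would instead rerun the kernel method of Lemma~\ref{lem:the-kernel-method} on $(-S_\Zcal|\Mcal_\Zcal)$. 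I expect nullities $4$ and $6$ there, and the computation should yield the same quadrilateral.

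The final step checks the hypotheses of Theorem~\ref{thm:not-periodic}, and these are identical to those for $\Ucal$. We have $\det\left(\begin{smallmatrix}1&-1\\0&0\end{smallmatrix}\right)=0$ and $\det\left(\begin{smallmatrix}0&-1\\1&0\end{smallmatrix}\right)=1=-\det\left(\begin{smallmatrix}-1&-1\\-1&0\end{smallmatrix}\right)\neq0$. The discriminant is $(B_1+C_1)^2-4A_1D_1=1+4=5$, which is positive and not a perfect square, so $\Zcal$ admits no periodic tiling. The main obstacle is the tile-by-tile verification in the second step, specifically making sure that the tile list and labels of $\Zcal$ from \cite{labbe_three_2020} match the identified version of $\Ucal$. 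The rest is routine.
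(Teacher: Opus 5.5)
Your proposal is correct, and the step you flag as the main obstacle does go through. Compare the rows of $(-S|\Mcal)$ for the two sets: each of the 16 rows for $\Zcal$, together with its stripe type, is the image of a row for $\Ucal$ under $F\mapsto D$, $G\mapsto I$. The pairs $\tau_6,\tau_7$, $\tau_{12},\tau_{13}$ and $\tau_{14},\tau_{15}$ of $\Ucal$ collapse. Condition~\eqref{eq:fourbulletpoints} depends on a tile only through its stripe type and its row of $\Mcal$, and the $\Ucal$ encoding is constant on the identified labels. Hence $\Zcal$ determines the quadrilateral with $A=\smallvector{1}{0}$, $B=\smallvector{0}{1}$, $C=\smallvector{-1}{-1}$, $D=\smallvector{-1}{0}$, and a direct check of the 16 tiles confirms this.

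The paper does not transfer anything. It reruns the kernel method on $\Zcal$ and obtains $A=\smallvector{1}{0}$, $B=\smallvector{1}{1}$, $C=\smallvector{-2}{-1}$, $D=\smallvector{-1}{0}$. This is your quadrilateral after the shear $(x,y)\mapsto(x+y,y)$: the row space of $(A|B|C|D)$ is the same, and the paper's $\vv,\hh$ are your maps sheared, up to a constant shift on $J_0$. So in your fallback the kernel method would give \emph{the same} quadrilateral only up to an invertible linear change of coordinates. This is harmless, since both versions satisfy the determinant conditions with discriminant $5$.

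Your route is shorter and explains why the two sets share their quadrilateral. It is exactly the duplicate-column observation that the paper uses only to motivate the simplification. The paper's computation, by contrast, is self-contained and needs no knowledge of how $\Zcal$ was derived from $\Ucal$. Cosmetically, the paper keeps the name $I$ for the merged label, so $I_0=\{A,B,C,E,I\}$.
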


\begin{proof}
For the self-similar 16 tiles example, using Lemma~\ref{lem:the-kernel-method}, 
    we have the following matrices\\[5mm]
    \[
(-S|\Mcal)=
\begin{pNiceArray}{rrrr|rrrrr|rrr|rrrr|rr}[small,first-row,first-col]
& \square & \boxminus & \boxbar & \boxplus & A & B & C & E & I & D & H & J & K & M & N & P & L & O \\
\tau_{0} & 0 & 0 & 0 & -1 & 0 & 0 & 0 & 0 & 0 & 1 & 0 & -1 & 0 & 0 & 0 & 0 & 0 & 0 \\
\tau_{1} & 0 & 0 & 0 & -1 & 0 & 0 & 0 & 0 & 0 & 1 & -1 & 0 & 0 & 0 & 0 & 0 & -1 & 1 \\
\tau_{2} & 0 & -1 & 0 & 0 & 0 & 0 & 0 & 0 & 0 & -1 & 0 & 1 & 0 & 1 & 0 & -1 & 0 & 0 \\
\tau_{3} & 0 & -1 & 0 & 0 & 0 & 0 & 0 & 0 & 0 & 0 & 0 & 0 & -1 & 1 & 0 & 0 & 0 & 0 \\
\tau_{4} & 0 & -1 & 0 & 0 & 0 & 0 & 0 & 0 & 0 & 0 & 1 & -1 & 0 & 0 & 0 & 0 & 0 & 0 \\
\tau_{5} & 0 & -1 & 0 & 0 & 0 & 0 & 0 & 0 & 0 & 0 & 0 & 0 & 0 & 0 & -1 & 1 & 0 & 0 \\
\tau_{6} & 0 & -1 & 0 & 0 & 0 & 0 & 0 & 0 & 0 & -1 & 1 & 0 & 1 & 0 & 0 & -1 & 0 & 0 \\
\tau_{7} & 0 & 0 & -1 & 0 & 0 & 1 & 0 & 0 & -1 & 0 & 0 & 0 & 0 & 0 & 0 & 0 & 0 & 0 \\
\tau_{8} & 0 & 0 & -1 & 0 & 0 & 0 & 0 & -1 & 1 & 0 & 0 & 0 & 0 & 0 & 0 & 0 & 1 & -1 \\
\tau_{9} & 0 & 0 & -1 & 0 & 0 & 0 & -1 & 0 & 1 & 0 & 0 & 0 & 0 & 0 & 0 & 0 & 0 & 0 \\
\tau_{10} & 0 & 0 & -1 & 0 & 1 & 0 & 0 & 0 & -1 & 0 & 0 & 0 & 0 & 0 & 0 & 0 & 1 & -1 \\
\tau_{11} & -1 & 0 & 0 & 0 & 0 & 0 & 0 & 1 & -1 & 0 & 0 & 0 & 0 & 0 & 0 & 0 & 0 & 0 \\
\tau_{12} & -1 & 0 & 0 & 0 & 0 & 0 & 0 & 0 & 0 & 0 & 0 & 0 & -1 & 0 & 0 & 1 & 0 & 0 \\
\tau_{13} & -1 & 0 & 0 & 0 & 0 & -1 & 0 & 0 & 1 & 0 & 0 & 0 & 1 & -1 & 0 & 0 & 0 & 0 \\
\tau_{14} & -1 & 0 & 0 & 0 & -1 & 0 & 0 & 0 & 1 & 0 & 0 & 0 & 0 & 0 & 0 & 0 & 0 & 0 \\
\tau_{15} & -1 & 0 & 0 & 0 & 0 & 0 & 1 & 0 & -1 & 0 & 0 & 0 & 0 & 0 & 1 & -1 & 0 & 0 \\
\CodeAfter
\OverBrace[shorten,yshift=3pt]{0-5}{0-9}{I_0}
\OverBrace[shorten,yshift=3pt]{0-10}{0-12}{I_1}
\OverBrace[shorten,yshift=3pt]{0-13}{0-16}{J_0}
\OverBrace[shorten,yshift=3pt]{0-17}{0-18}{J_1}
\end{pNiceArray}
\]
satisfying
$\rank(-S|\Mcal)=12$,
$\rank(\Mcal)=10$,
$\nullity(-S|\Mcal)=6$ and
$\nullity(\Mcal)=4$.
The right kernel of $\Mcal$ is generated by the rows of the matrix $K$ where
\[
(0|K) =
\footnotesize
\def\arraycolsep{1mm}
\left(\begin{array}{rrrr|rrrrr|rrr|rrrr|rr}
0 & 0 & 0 & 0 & 1 & 1 & 1 & 1 & 1 & 0 & 0 & 0 & 0 & 0 & 0 & 0 & 0 & 0 \\
0 & 0 & 0 & 0 & 0 & 0 & 0 & 0 & 0 & 1 & 1 & 1 & 0 & 0 & 0 & 0 & 0 & 0 \\
0 & 0 & 0 & 0 & 0 & 0 & 0 & 0 & 0 & 0 & 0 & 0 & 1 & 1 & 1 & 1 & 0 & 0 \\
0 & 0 & 0 & 0 & 0 & 0 & 0 & 0 & 0 & 0 & 0 & 0 & 0 & 0 & 0 & 0 & 1 & 1
\end{array}\right)
\]
The right kernel of 
$\left(\begin{smallmatrix}
-S & \Mcal\\ 0 & K
 \end{smallmatrix}\right)$
has dimension 2
and is generated by the rows of 
\[
\left(\ABCD{A_1}{A_2}{B_1}{B_2}{C_1}{C_2}{D_1}{D_2}|V|H\right)
=
\footnotesize
\def\arraycolsep{1mm}
\left(\begin{array}{rrrr|rrrrr|rrr|rrrr|rr}
2 & 2 & -4 & -2 & -2 & -4 & 4 & 2 & 0 & -2 & 2 & 0 & -1 & 1 & -1 & 1 & -1 & 1 \\
0 & 6 & -6 & 0 & 0 & -6 & 6 & 0 & 0 & -2 & 4 & -2 & 0 & 6 & -6 & 0 & -3 & 3
\end{array}\right)
\]
from which we recover the quadrilateral $ABDC$ where
$\ABCD{A_1}{A_2}{B_1}{B_2}{C_1}{C_2}{D_1}{D_2}
=
\left(\begin{smallmatrix}
2 & 2 & -4 & -2 \\
0 & 6 & -6 & 0
\end{smallmatrix}\right)
$ and the maps $\vv$ and $\hh$.
The codomain of the maps $\vv$ and $\hh$ can be made nonnegative
by adding multiples of the first four rows of $(0|K)$ to them.
After dividing by the gcd of the entries of each row, we obtain: 
\[
\footnotesize
\def\arraycolsep{1mm}
\left(\ABCD{A_1}{A_2}{B_1}{B_2}{C_1}{C_2}{D_1}{D_2}|V'|H'\right)
=
\left(\begin{array}{rrrr|rrrrr|rrr|rrrr|rr}
1 & 1 & -2 & -1 & 1 & 0 & 4 & 3 & 2 & 0 & 2 & 1 & 0 & 1 & 0 & 1 & 0 & 1 \\
0 & 1 & -1 & 0 & 1 & 0 & 2 & 1 & 1 & 0 & 1 & 0 & 1 & 2 & 0 & 1 & 0 & 1
\end{array}\right)
.\]

Thus, the set of Wang tiles determines the quadrilateral $ABDC$ satisfying
    \begin{align*}
    \det\left(\begin{smallmatrix} A_1&D_1\\A_2&D_2 \end{smallmatrix}\right)
        &= \det\left(\begin{smallmatrix} 1&-1\\0&0 \end{smallmatrix}\right) =0,\\
    \det\left(\begin{smallmatrix} B_1&D_1\\B_2&D_2 \end{smallmatrix}\right)
        &=\det\left(\begin{smallmatrix} 1&-1\\1&0 \end{smallmatrix}\right)
        =1 \neq0,\\
    \det\left(\begin{smallmatrix} C_1&D_1\\C_2&D_2 \end{smallmatrix}\right)
       &=\det\left(\begin{smallmatrix} -2&-1\\-1&0 \end{smallmatrix}\right)
      =-1=-\det\left(\begin{smallmatrix} B_1&D_1\\B_2&D_2 \end{smallmatrix}\right).
    \end{align*}
The discriminant
$\Delta=(B_1+C_1)^2-4A_1D_1=(1-2)^2+4=5$
is positive and is not a perfect square.
Thus, we conclude from Theorem~\ref{thm:not-periodic} that
every valid configuration over these Wang tiles is non-periodic.
\end{proof}

\subsection{The set $T_7$ describing Jeandel-Rao Wang tiles}

In the description of the substitutive structure of the Jeandel-Rao tilings
made in \cite{MR4226493}, a set $T_7$ of 20 Wang tiles was considered.
Contrary to Jeandel-Rao tiles and intermediate sets of Wang tiles in the
description of the substitutive structure, the set $T_7$ has vertical and
horizontal stripes.
We may apply the method described here and prove that the set $T_7$ admits no
periodic tiling of the plane.

\begin{theorem}
    The set $\Tcal_7$ 
    \begin{center}
        \includegraphics{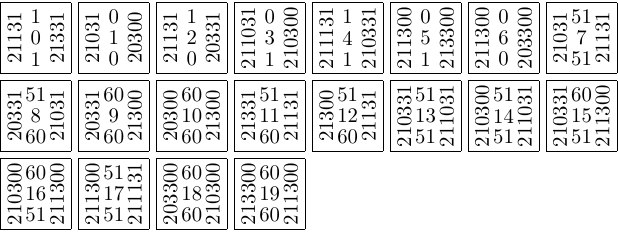}
    \end{center}
    of 20 Wang tiles admits no periodic tiling.
\end{theorem}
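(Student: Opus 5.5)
The plan is the same as for the 19 and 16 self-similar examples: apply the kernel method of Lemma~\ref{lem:the-kernel-method} to $\Tcal_7$, read off a quadrilateral $ABDC$, and conclude with Theorem~\ref{thm:not-periodic}.

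First, I would fix the stripe partitions. I take $I=I_0\cup I_1$ for the 13 vertical labels and $J=J_0\cup J_1$ for the 4 horizontal labels, as suggested by the drawing of $\Tcal_7$. I then check that opposite edges of each tile lie in the same part, and that all four classes $\Tcal_\square,\Tcal_\boxminus,\Tcal_\boxbar,\Tcal_\boxplus$ are non-empty, so that $\sigma$ is injective.

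Next, I form the $20\times(4+13+4)$ matrix $(-S_\Tcal|\Mcal_\Tcal)$ and compute its rank and that of $\Mcal_\Tcal$. According to Table~\ref{table:kernel-method-statistics}, I expect $\nullity(\Mcal_\Tcal)=4$ and $\nullity(-S_\Tcal|\Mcal_\Tcal)=6$. Condition (i) of Lemma~\ref{lem:the-kernel-method} then holds with $d=2$. Since the nullity of $\Mcal_\Tcal$ is 4, I take $K=K_1$, made of the four indicator vectors $k_{I_0},k_{I_1},k_{J_0},k_{J_1}$.

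I then compute the 2-dimensional right kernel of $\left(\begin{smallmatrix}-S & \Mcal\\ 0 & K\end{smallmatrix}\right)$. This gives a rank 2 matrix $\left(\ABCD{A_1}{A_2}{B_1}{B_2}{C_1}{C_2}{D_1}{D_2}|V|H\right)$ with integer entries. The row space may be changed by any $\mathrm{GL}_2(\Q)$ transformation, since this applies the same linear map to all four points and to $\vv,\hh$. I use that freedom to find a basis in which $\det(A,D)=0$ and $\det(B,D)=-\det(C,D)\neq0$. I then add multiples of the rows of $K_1$ so that $\vv$ and $\hh$ are nonnegative, and divide by gcds. Finally, I display the resulting maps $\vv,\hh$ and verify tile by tile that $\vv(a)+\hh(b)-\vv(c)-\hh(d)$ equals $A$, $B$, $C$ or $D$ according to the stripe class.

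\textbf{Main obstacle.} The determinant conditions are not guaranteed by Lemma~\ref{lem:the-kernel-method}, which only provides some rank 2 solution. Here is how I would first try to satisfy them.
\begin{itemize}
\item The first condition asks that $A$ and $D$ be collinear with $O$. I choose the second basis row to be a combination of the two kernel rows whose $A$- and $D$-coordinates are both zero, which is possible unless $A$ and $D$ are proportional.
\item The second condition, $B$ and $C$ equidistant from the line $AOD$ on opposite sides, must then hold automatically. It is equivalent to $B+C$ lying on the line $OA$.
\end{itemize}
I expect this to hold, because Jeandel--Rao tilings have golden-ratio structure and so should give $\alpha=\beta$. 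If it fails, I would fall back on the full solution set from Section~\ref{sec:solution-to-alpha-beta-equation}.

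Once the conditions hold, I compute the discriminant $(B_i+C_i)^2-4A_iD_i$. I expect $5$ (or $5$ times a square, such as $20$), which is positive and not a perfect square. Theorem~\ref{thm:not-periodic} then gives that $\Tcal_7$ admits no periodic tiling.
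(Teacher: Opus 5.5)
Your route is the paper's: apply the kernel method to $\Tcal_7$ (with $\nullity(\Mcal_\Tcal)=4$, $\nullity(-S_\Tcal|\Mcal_\Tcal)=6$, $K=K_1$), read the quadrilateral off the two-dimensional kernel, check the hypotheses of Theorem~\ref{thm:not-periodic}, and find discriminant $5$. The step that fails is your plan to \emph{arrange} the determinant conditions by changing the basis of the row space. Replacing the two rows by $L$ times them, with $L\in\mathrm{GL}_2(\Q)$, replaces $A,B,C,D$ by $LA,LB,LC,LD$. So $\det(A,D)$ and $\det(B,D)+\det(C,D)$ are just multiplied by $\det L\neq 0$. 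Moreover $\dim\pi_1(X)=\nullity(-S_\Tcal|\Mcal_\Tcal)-\nullity(\Mcal_\Tcal)=2$, so every rank-2 choice of the $ABCD$-block spans the same plane. Both conditions are therefore intrinsic to $\Tcal_7$, and no choice of basis can create them. Your first bullet is in fact inverted. A nonzero combination of the two rows whose $A$- and $D$-coordinates both vanish exists if and only if $A$ and $D$ are proportional, that is, if and only if the first condition already holds. The second condition is not automatic either.

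So the proof consists of the explicit computation, which your proposal only anticipates. In the paper the kernel rows have $ABCD$-parts $(6,0,-6,-6)$ and $(0,14,-14,0)$. The second row is exactly a row vanishing in the $A$- and $D$-coordinates, and it exists because $A$ and $D$ happen to be proportional here. After shifting by rows of $K_1$ and dividing by gcds, one gets $A=(1,0)$, $B=(0,1)$, $C=(-1,-1)$, $D=(-1,0)$. These give $\det(A,D)=0$, $\det(B,D)=1=-\det(C,D)$, and $(B_1+C_1)^2-4A_1D_1=1+4=5$. Your plan therefore does succeed, but only because these conditions hold for the quadrilateral of $\Tcal_7$, which is unique up to $\mathrm{GL}_2$. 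That has to be verified on the computed kernel; it cannot be produced by choosing a basis.
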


\begin{proof}
For the Jeandel-Rao $T_7$ example, using Lemma~\ref{lem:the-kernel-method}, 
we have the following matrices\\[5mm]
    \[
(-S|\Mcal)=
\begin{pNiceArray}{rrrr|rrrrrrr|rrrrrr|rr|rr}[small,first-row,first-col]\RowStyle{\rotate}
& \square & \boxminus & \boxbar & \boxplus & 203300 & 210300 & 210331 & 211031 & 211131 & 211300 & 213300 & 20300 & 20331 & 21031 & 21131 & 21300 & 21331 & 51 & 60 & 0 & 1 \\
\tau_{0} & 0 & 0 & 0 & -1 & 0 & 0 & 0 & 0 & 0 & 0 & 0 & 0 & 0 & 0 & -1 & 0 & 1 & 0 & 0 & 0 & 0 \\
\tau_{1} & 0 & 0 & 0 & -1 & 0 & 0 & 0 & 0 & 0 & 0 & 0 & 1 & 0 & -1 & 0 & 0 & 0 & 0 & 0 & 0 & 0 \\
\tau_{2} & 0 & 0 & 0 & -1 & 0 & 0 & 0 & 0 & 0 & 0 & 0 & 0 & 1 & 0 & -1 & 0 & 0 & 0 & 0 & -1 & 1 \\
\tau_{3} & 0 & 0 & -1 & 0 & 0 & 1 & 0 & -1 & 0 & 0 & 0 & 0 & 0 & 0 & 0 & 0 & 0 & 0 & 0 & 1 & -1 \\
\tau_{4} & 0 & 0 & -1 & 0 & 0 & 0 & 1 & 0 & -1 & 0 & 0 & 0 & 0 & 0 & 0 & 0 & 0 & 0 & 0 & 0 & 0 \\
\tau_{5} & 0 & 0 & -1 & 0 & 0 & 0 & 0 & 0 & 0 & -1 & 1 & 0 & 0 & 0 & 0 & 0 & 0 & 0 & 0 & 1 & -1 \\
\tau_{6} & 0 & 0 & -1 & 0 & 1 & 0 & 0 & 0 & 0 & -1 & 0 & 0 & 0 & 0 & 0 & 0 & 0 & 0 & 0 & 0 & 0 \\
\tau_{7} & 0 & -1 & 0 & 0 & 0 & 0 & 0 & 0 & 0 & 0 & 0 & 0 & 0 & -1 & 1 & 0 & 0 & 0 & 0 & 0 & 0 \\
\tau_{8} & 0 & -1 & 0 & 0 & 0 & 0 & 0 & 0 & 0 & 0 & 0 & 0 & -1 & 1 & 0 & 0 & 0 & 1 & -1 & 0 & 0 \\
\tau_{9} & 0 & -1 & 0 & 0 & 0 & 0 & 0 & 0 & 0 & 0 & 0 & 0 & -1 & 0 & 0 & 1 & 0 & 0 & 0 & 0 & 0 \\
\tau_{10} & 0 & -1 & 0 & 0 & 0 & 0 & 0 & 0 & 0 & 0 & 0 & -1 & 0 & 0 & 0 & 1 & 0 & 0 & 0 & 0 & 0 \\
\tau_{11} & 0 & -1 & 0 & 0 & 0 & 0 & 0 & 0 & 0 & 0 & 0 & 0 & 0 & 0 & 1 & 0 & -1 & 1 & -1 & 0 & 0 \\
\tau_{12} & 0 & -1 & 0 & 0 & 0 & 0 & 0 & 0 & 0 & 0 & 0 & 0 & 0 & 0 & 1 & -1 & 0 & 1 & -1 & 0 & 0 \\
\tau_{13} & -1 & 0 & 0 & 0 & 0 & 0 & -1 & 1 & 0 & 0 & 0 & 0 & 0 & 0 & 0 & 0 & 0 & 0 & 0 & 0 & 0 \\
\tau_{14} & -1 & 0 & 0 & 0 & 0 & -1 & 0 & 1 & 0 & 0 & 0 & 0 & 0 & 0 & 0 & 0 & 0 & 0 & 0 & 0 & 0 \\
\tau_{15} & -1 & 0 & 0 & 0 & 0 & 0 & -1 & 0 & 0 & 1 & 0 & 0 & 0 & 0 & 0 & 0 & 0 & -1 & 1 & 0 & 0 \\
\tau_{16} & -1 & 0 & 0 & 0 & 0 & -1 & 0 & 0 & 0 & 1 & 0 & 0 & 0 & 0 & 0 & 0 & 0 & -1 & 1 & 0 & 0 \\
\tau_{17} & -1 & 0 & 0 & 0 & 0 & 0 & 0 & 0 & 1 & -1 & 0 & 0 & 0 & 0 & 0 & 0 & 0 & 0 & 0 & 0 & 0 \\
\tau_{18} & -1 & 0 & 0 & 0 & -1 & 1 & 0 & 0 & 0 & 0 & 0 & 0 & 0 & 0 & 0 & 0 & 0 & 0 & 0 & 0 & 0 \\
\tau_{19} & -1 & 0 & 0 & 0 & 0 & 0 & 0 & 0 & 0 & 1 & -1 & 0 & 0 & 0 & 0 & 0 & 0 & 0 & 0 & 0 & 0 \\
\CodeAfter
\OverBrace[shorten,yshift=3pt]{0-5}{0-11}{I_0}
\OverBrace[shorten,yshift=3pt]{0-12}{0-17}{I_1}
\OverBrace[shorten,yshift=3pt]{0-18}{0-19}{J_0}
\OverBrace[shorten,yshift=3pt]{0-20}{0-21}{J_1}
\end{pNiceArray}
\]
satisfying
$\rank(-S|\Mcal)=15$,
$\rank(\Mcal)=13$,
$\nullity(-S|\Mcal)=6$ and
$\nullity(\Mcal)=4$.
The right kernel of $\Mcal$ is generated by the rows of the matrix $K$ where
\[
(0|K) =
\footnotesize
\def\arraycolsep{1mm}
\left(\begin{array}{rrrr|rrrrrrr|rrrrrr|rr|rr}
0 & 0 & 0 & 0 & 1 & 1 & 1 & 1 & 1 & 1 & 1 & 0 & 0 & 0 & 0 & 0 & 0 & 0 & 0 & 0 & 0 \\
0 & 0 & 0 & 0 & 0 & 0 & 0 & 0 & 0 & 0 & 0 & 1 & 1 & 1 & 1 & 1 & 1 & 0 & 0 & 0 & 0 \\
0 & 0 & 0 & 0 & 0 & 0 & 0 & 0 & 0 & 0 & 0 & 0 & 0 & 0 & 0 & 0 & 0 & 1 & 1 & 0 & 0 \\
0 & 0 & 0 & 0 & 0 & 0 & 0 & 0 & 0 & 0 & 0 & 0 & 0 & 0 & 0 & 0 & 0 & 0 & 0 & 1 & 1
\end{array}\right)
\]
The right kernel of 
$\left(\begin{smallmatrix}
-S & \Mcal\\ 0 & K
 \end{smallmatrix}\right)$
has dimension 2
and is generated by the rows of 
\[
\left(\ABCD{A_1}{A_2}{B_1}{B_2}{C_1}{C_2}{D_1}{D_2}|V|H\right)
=
\footnotesize
\def\arraycolsep{1mm}
\left(\begin{array}{rrrr|rrrrrrr|rrrrrr|rr|rr}
6 & 0 & -6 & -6 & -6 & 0 & 0 & 6 & 6 & 0 & -6 & -2 & -2 & 4 & 4 & -2 & -2 & -3 & 3 & 0 & 0 \\
0 & 14 & -14 & 0 & -6 & -6 & -6 & -6 & 8 & 8 & 8 & -7 & -7 & -7 & 7 & 7 & 7 & 7 & -7 & -7 & 7
\end{array}\right)
\]
from which we recover the quadrilateral $ABDC$ where
$\ABCD{A_1}{A_2}{B_1}{B_2}{C_1}{C_2}{D_1}{D_2}
=
\left(\begin{smallmatrix}
6 & 0 & -6 & -6 \\
0 & 14 & -14 & 0
\end{smallmatrix}\right)
$ and the maps $\vv$ and $\hh$.
The codomain of the maps $\vv$ and $\hh$ can be made nonnegative
by adding multiples of the first four rows of $(0|K)$ to them.
After dividing by the gcd of the entries of each row, we obtain: 
\[
\footnotesize
\def\arraycolsep{1mm}
\left(\ABCD{A_1}{A_2}{B_1}{B_2}{C_1}{C_2}{D_1}{D_2}|V'|H'\right)
=
\left(\begin{array}{rrrr|rrrrrrr|rrrrrr|rr|rr}
1 & 0 & -1 & -1 & 0 & 1 & 1 & 2 & 2 & 1 & 0 & 0 & 0 & 1 & 1 & 0 & 0 & 0 & 1 & 0 & 0 \\
0 & 1 & -1 & 0 & 0 & 0 & 0 & 0 & 1 & 1 & 1 & 0 & 0 & 0 & 1 & 1 & 1 & 1 & 0 & 0 & 1
\end{array}\right)
.\]

Thus, the set of Wang tiles determines the quadrilateral $ABDC$ satisfying
    \begin{align*}
    \det\left(\begin{smallmatrix} A_1&D_1\\A_2&D_2 \end{smallmatrix}\right)
        &= \det\left(\begin{smallmatrix} 1&-1\\0&0 \end{smallmatrix}\right) =0,\\
    \det\left(\begin{smallmatrix} B_1&D_1\\B_2&D_2 \end{smallmatrix}\right)
        &=\det\left(\begin{smallmatrix} 0&-1\\1&0 \end{smallmatrix}\right)
        =1 \neq0,\\
    \det\left(\begin{smallmatrix} C_1&D_1\\C_2&D_2 \end{smallmatrix}\right)
       &=\det\left(\begin{smallmatrix} -1&-1\\-1&0 \end{smallmatrix}\right)
       =-1
      =-\det\left(\begin{smallmatrix} B_1&D_1\\B_2&D_2 \end{smallmatrix}\right).
    \end{align*}
The discriminant
$\Delta=(B_1+C_1)^2-4A_1D_1=(0-1)^2+4=5$
is positive and is not a perfect square.
Thus, we conclude from Theorem~\ref{thm:not-periodic} that
every valid configuration over these Wang tiles is non-periodic.
\end{proof}

\section{The solutions to Equation~\eqref{alpha_beta_relations}}
\label{sec:solution-to-alpha-beta-equation}

In this section, we describe the set of solutions to
Equation~\eqref{alpha_beta_relations}. 
In the generic case, these solutions can be expressed in terms of the lines passing through the origin
that are tangent to some parabola.

\subsection{Symmetries}

Notice that the expression on the right of Equation~\eqref{alpha_beta_relations},
\begin{align*}
     (1-\alpha)\left[(1-\beta)\left(\begin{array}{c} A_1\\A_2 \end{array}\right) 
                    +\beta    \left(\begin{array}{c} B_1\\B_2 \end{array}\right) \right]
        +\alpha\left[(1-\beta)\left(\begin{array}{c} C_1\\C_2 \end{array}\right) 
                    +\beta    \left(\begin{array}{c} D_1\\D_2 \end{array}\right) \right],
\end{align*} 
can be interpreted as the double integral of a vector-valued piecewise constant function on $[0,1]^2$, with four regions separated by the lines $x=\alpha$ and $y=\beta$. 
\begin{figure}[H]
\centering
\includegraphics[scale=0.7]{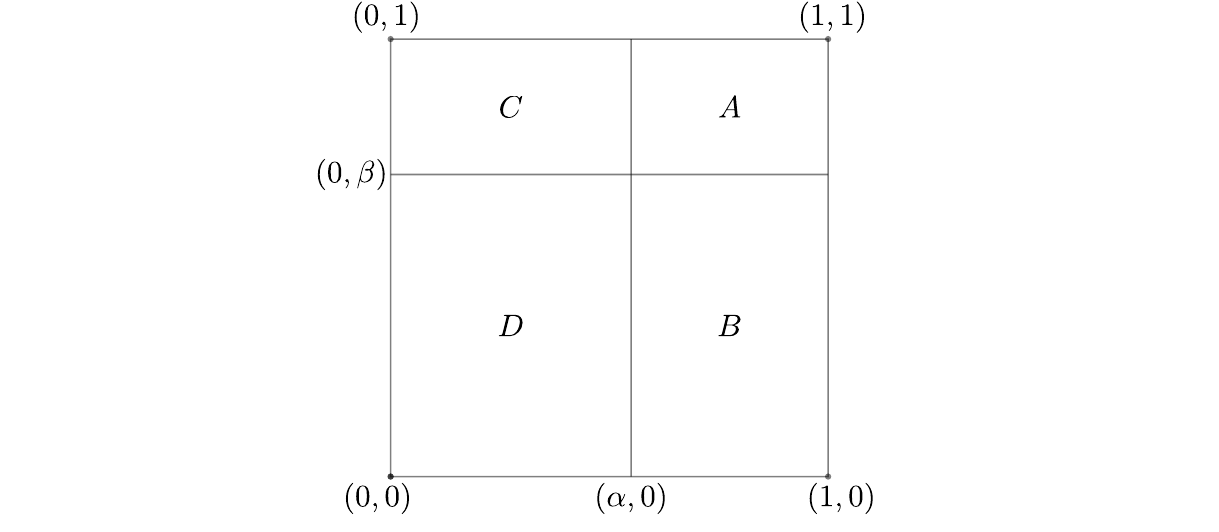}
\caption{A piecewise constant function $[0,1]^2 \to \R^2$}\label{fig:ABCD_on_the_unit_square}
\end{figure}
The group of affine transformations of $[0,1]^2$ generated by $(x,y)\mapsto (y,x)$ and $(x,y)\mapsto (1-x,y)$ is isomorphic to $D_4$, the dihedral group of order 8. Since the right action of this group on the set of functions $f:[0,1]^2 \to \R^2$ takes piecewise constant functions as above to functions of the same form, we have an induced action of $D_4$ on the set $\mathcal{Z}$ of solutions 
\[
    \left(\alpha, \beta, \ABCD{A_1}{A_2}{B_1}{B_2}{C_1}{C_2}{D_1}{D_2}\right)
    \in \C^2 \times \Z^{2\times4}
\] 
to \eqref{alpha_beta_relations}. 
Two generators for this action on $\mathcal{Z}$ are given by:
\begin{align*}
    \Theta_1:
    \left(\alpha, \beta, \ABCD{A_1}{A_2}{B_1}{B_2}{C_1}{C_2}{D_1}{D_2}\right)
    &\mapsto
    \left(\beta, \alpha, \ABCD{A_1}{A_2}{C_1}{C_2}{B_1}{B_2}{D_1}{D_2}\right),\\
    \Theta_2:
    \left(\alpha, \beta, \ABCD{A_1}{A_2}{B_1}{B_2}{C_1}{C_2}{D_1}{D_2}\right)
    &\mapsto
    \left(1-\alpha, \beta, \ABCD{C_1}{C_2}{D_1}{D_2}{A_1}{A_2}{B_1}{B_2}\right).
\end{align*}

\begin{lemma}\label{lem:Theta-action-on-solutions}
    Let
    $\left(\alpha, \beta, \ABCD{A_1}{A_2}{B_1}{B_2}{C_1}{C_2}{D_1}{D_2}\right)\in\mathcal{Z}$
    be a solution to Equation~\eqref{alpha_beta_relations}. 
    Then, for every 
    $\theta\in\langle\Theta_1,\Theta_2\rangle$,
    $\theta\left(\alpha, \beta, \ABCD{A_1}{A_2}{B_1}{B_2}{C_1}{C_2}{D_1}{D_2}\right)\in\mathcal{Z}$
    is also a solution.
\end{lemma}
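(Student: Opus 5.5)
It suffices to check that each of the two generators $\Theta_1$ and $\Theta_2$ maps $\mathcal{Z}$ into $\mathcal{Z}$. Every element $\theta$ of $\langle\Theta_1,\Theta_2\rangle$ is a finite word in $\Theta_1,\Theta_2$, so the general statement then follows by induction on the length of that word. Both generators are involutions ($\Theta_1^2=\Theta_2^2=\mathrm{id}$), so inverses do not need separate treatment. I will verify each generator by direct substitution into the right-hand side of Equation~\eqref{alpha_beta_relations}, written in the expanded bilinear form
\[
E(\alpha,\beta;A,B,C,D)=(1-\alpha)(1-\beta)A+(1-\alpha)\beta B+\alpha(1-\beta)C+\alpha\beta D .
\]

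For $\Theta_1$, the new data are $\alpha'=\beta$, $\beta'=\alpha$, $A'=A$, $B'=C$, $C'=B$, $D'=D$. Substituting gives
\[
E(\alpha',\beta';A',B',C',D')=(1-\beta)(1-\alpha)A+(1-\beta)\alpha C+\beta(1-\alpha)B+\beta\alpha D ,
\]
which coincides term by term with $E(\alpha,\beta;A,B,C,D)$. It therefore vanishes whenever the original tuple is a solution.

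For $\Theta_2$, the new data are $\alpha'=1-\alpha$, $\beta'=\beta$, $A'=C$, $B'=D$, $C'=A$, $D'=B$. Substituting gives
\[
E(\alpha',\beta';A',B',C',D')=\alpha(1-\beta)C+\alpha\beta D+(1-\alpha)(1-\beta)A+(1-\alpha)\beta B ,
\]
which is again exactly $E(\alpha,\beta;A,B,C,D)=0$.

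I also have to check that the images stay inside $\C^2\times\Z^{2\times4}$. This is immediate: the generators only permute the integer columns and apply the affine maps $\alpha\mapsto 1-\alpha$ or swap $\alpha\leftrightarrow\beta$ to the complex parameters.

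There is no real obstacle here. The only point needing care is reading off correctly which columns are permuted, given the paper's convention that $\ABCD{A_1}{A_2}{B_1}{B_2}{C_1}{C_2}{D_1}{D_2}$ lists $A,B,C,D$ as columns. One could also phrase the argument via the integral interpretation in Figure~\ref{fig:ABCD_on_the_unit_square}: precomposing the piecewise constant function with a measure-preserving symmetry of $[0,1]^2$ leaves its integral unchanged. The algebraic check above is shorter and is the one I will write.
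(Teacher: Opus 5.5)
Your proof is correct and takes essentially the paper's approach: the paper simply calls the lemma ``an easy observation,'' motivated by the same integral-over-$[0,1]^2$ symmetry you mention. Your substitution check for the two involutive generators $\Theta_1,\Theta_2$, with the columns read off correctly and closure under composition, is exactly the verification that remark leaves implicit.
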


\begin{proof}
    This is an easy observation.
\end{proof}

For example, in Table~\ref{table:solutions-alpha-beta-examples}, one may observe
that the solutions $(\alpha,\beta)$ 
for $\ABCD{-3}{-2}{4}{-2}{0}{3}{-1}{-2}$ are complex,
for $\ABCD{-3}{-2}{4}{-2}{-1}{-2}{0}{3}$ are rational,
while the solutions for $\ABCD{-3}{-2}{-1}{-2}{0}{3}{4}{-2}$
are real and irrational.
These three solutions are in three distinct orbits, each of size 8, under the 
action generated by $\langle\Theta_1,\Theta_2\rangle$.

\subsection{Solutions}

Write $E(\alpha,\beta)$ for the right hand side of Equation~\eqref{alpha_beta_relations}:
\begin{align}
E(\alpha, \beta) &= (1-\alpha)\left[(1-\beta)A +\beta B \right] + \alpha\left[(1-\beta)C+\beta D \right]\label{eq:alpha_line}\\
&= (1-\beta)\left[(1-\alpha)A +\alpha C \right] + \beta\left[(1-\alpha)B+\alpha D \right]\label{eq:beta_line}\\
&= A + \alpha(C-A) + \beta(B-A) + \alpha \beta(A -B - C + D). \label{eq:alpha_beta_alphabeta}
\end{align}
For fixed $\beta$, the representation \eqref{eq:alpha_line} shows that the points traced out by $E(\alpha, \beta)$ as $\alpha$ varies form a line passing through $(1-\beta)A +\beta B$ and $(1-\beta)C+\beta D$. Likewise, for fixed $\alpha$ and varying $\beta$, \eqref{eq:beta_line} shows that $E(\alpha, \beta)$ traces out a line passing through $(1-\alpha)A +\alpha C $ and $(1-\alpha)B+\alpha D$ (with the exception in both cases that if the two points coincide, $E(\alpha, \beta)$ is constant). Figure \ref{fig:lines_over_unit_intervals} illustrates this with light blue and purple lines corresponding to fixed $\alpha$ and fixed $\beta$, respectively, with the free variable restricted to the interval $[0,1]$.
\begin{figure}[h]
\centering
\includegraphics[scale=0.7]{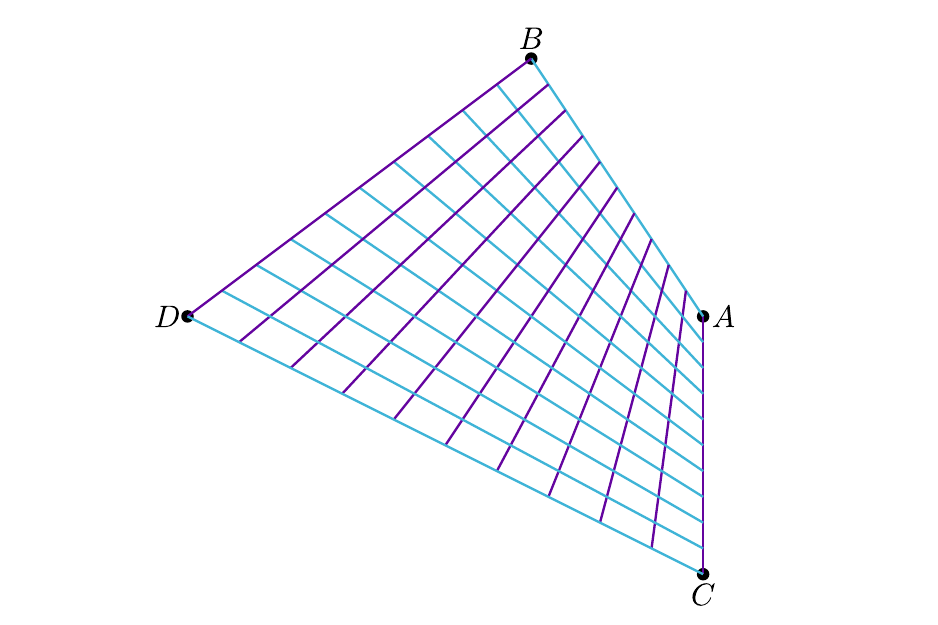}
\caption{Two representative sets of line segments traced out by $E(\alpha, \beta)$ for $\alpha$ and $\beta$ fixed separately}\label{fig:lines_over_unit_intervals}
\end{figure}
If the free variable is allowed to run through $\R$, we obtain two families of lines that sweep out the same subset of $\R^2$, so real solutions to Equation~\eqref{alpha_beta_relations} correspond to configurations where the origin lies inside the region swept out.
\begin{figure}[h]
\centering
\includegraphics[scale=0.7]{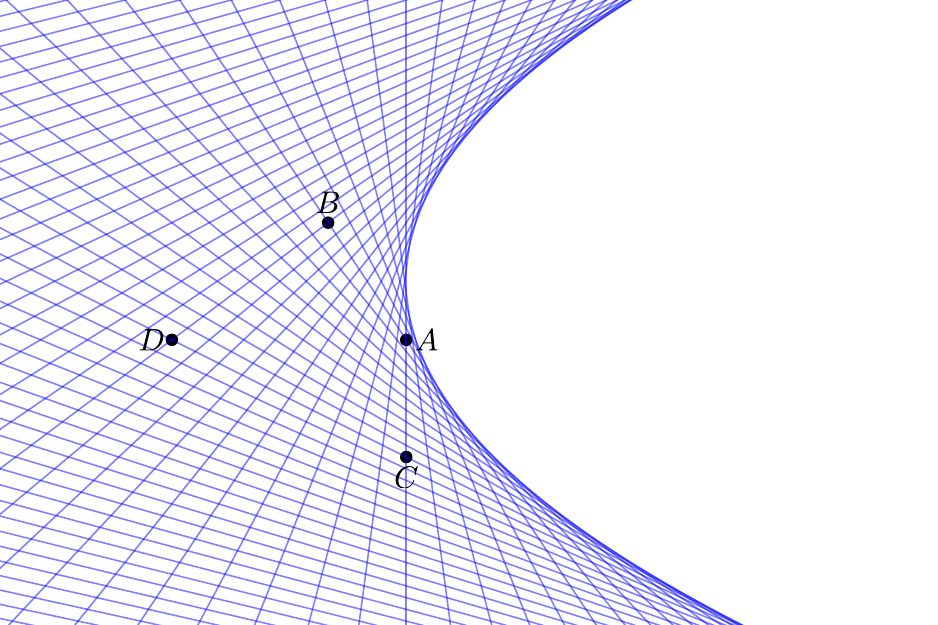}
\caption{The region swept out by lines in the generic case (vi) in Proposition \ref{prop:alpha_beta_solutions} below}\label{fig:parabolic_envelope}
\end{figure}

In the following proposition, it is convenient to express the solutions 
to \eqref{alpha_beta_relations}
in terms of the Pl\"ucker coordinates \cite{zbMATH00598493}
associated with the line in the projective 3-space $\mathbb{P}^3$
passing throught the points $(A_1:B_1:C_1:D_1)$ and $(A_2:B_2:C_2:D_2)$:
\begin{align*}
      p_{AB}  = \det\left(\begin{smallmatrix} A_1&B_1\\A_2&B_2 \end{smallmatrix}\right) &= A_1B_2 - A_2B_1, 
    & p_{BC}  = \det\left(\begin{smallmatrix} B_1&C_1\\B_2&C_2 \end{smallmatrix}\right) &= B_1C_2 - B_2C_1,\\
      p_{AC}  = \det\left(\begin{smallmatrix} A_1&C_1\\A_2&C_2 \end{smallmatrix}\right) &= A_1C_2 - A_2C_1, 
    & p_{BD} =  \det\left(\begin{smallmatrix} B_1&D_1\\B_2&D_2 \end{smallmatrix}\right) &= B_1D_2 - B_2D_1,\\
      p_{AD}  = \det\left(\begin{smallmatrix} A_1&D_1\\A_2&D_2 \end{smallmatrix}\right) &= A_1D_2 - A_2D_1, 
    & p_{CD}  = \det\left(\begin{smallmatrix} C_1&D_1\\C_2&D_2 \end{smallmatrix}\right) &= C_1D_2 - C_2D_1,
\end{align*}
satisfying $p_{ij}=-p_{ji}$ for every $i,j\in\{A,B,C,D\}$.
Note that we say that two vectors $V,W$ are \emph{parallel} if
$\text{span}(\{V,W\})$ is at most one-dimensional.  Thus the zero vector is
parallel to all vectors.

\begin{proposition}\label{prop:alpha_beta_solutions}
    Let $A,B,C,D \in \R^2$, and consider the quadrilateral $ABDC$.
    \begin{enumerate}[(i)]
    \item\label{item:points_coincide} If $A=B=C=D$, then Equation~\eqref{alpha_beta_relations} has 
            no solution unless $A=B=C=D=\mathbf{0}$,
            in which case $(\alpha, \beta)$ can be arbitrary.
    \item\label{item:points_collinear} If the points do not coincide and $A,B,C,D$ lie on a line $\ell$, then
            Equation~\eqref{alpha_beta_relations} has 
            no solution unless $\ell$
            goes through the origin, 
            in which case it has a one-dimensional solution space.
    \item\label{item:parallelogram} If the points are not collinear and $ABDC$ is a parallelogram, then
            Equation~\eqref{alpha_beta_relations} has a unique solution given by
            \begin{align}
            (\alpha, \beta) &= \frac{1}{p_{BC} + p_{CA} + p_{AB}}\left(p_{AB}, p_{CA}\right)
            \end{align}
    \item\label{item:trapezium} If $ABDC$ is a trapezium, with the edges $AC$ and $BD$ parallel, and $AB$ and $CD$ not parallel, let $P$ be the point of intersection of the lines through $AB$ and $CD$, and let $\ell$ be the line through $P$ parallel to $AC$ and $BD$. If $P=\mathbf{0}$, solutions to Equation~\eqref{alpha_beta_relations} are given by
            \begin{align}
            (\alpha, \beta) &= \left(\gamma, \frac{p_{AB}+p_{AC}+p_{DA}}{p_{AC} + p_{CB} +  p_{BD} + p_{DA}}\right),\;\;\gamma \in \R
            \end{align}
            Otherwise, if $P\neq \mathbf{0}$ and $\ell$ passes through $\mathbf{0}$, there are no solutions, while if $\ell$ does not pass through $\mathbf{0}$, there is a unique solution given by 
            \begin{align}
            (\alpha, \beta) &= \left(\frac{p_{AB}}{p_{AB} + p_{DC} }, \frac{p_{AB}+p_{AC}+p_{DA}}{p_{AC} + p_{CB} +  p_{BD} + p_{DA}}\right)
            \end{align}
    \item If the edges $AB$ and $CD$ are parallel, and $AC$ and $BD$ are not, the same conclusions as in \eqref{item:trapezium} hold, but with $\alpha$ and $\beta$, and $B$ and $C$ switched.
    \item\label{item:generic_position} If neither $AB$ and $CD$ nor $AC$ and $BD$ are parallel, let $\Delta = (p_{AD} + p_{CB})^2 - 4p_{AB}p_{CD}$. If $\Delta \geq 0$, the solutions to Equation~\eqref{alpha_beta_relations} are
    \begin{align}
    (\alpha, \beta) &= \left(\frac{ 2p_{AB} + p_{BC} + p_{DA} \pm \sqrt{\Delta}}{2(p_{AB} + p_{BC} + p_{CD} + p_{DA})}, \frac{ 2p_{AC} + p_{CB} + p_{DA} \mp \sqrt{\Delta}}{2(p_{AC} + p_{CB} + p_{BD} + p_{DA})} \right)
    \end{align}
    If $\Delta < 0$, there are no real solutions.
    \end{enumerate}
\end{proposition}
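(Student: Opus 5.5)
The plan is to reduce Equation~\eqref{alpha_beta_relations} to scalar equations by taking determinants. We use the expansion \eqref{eq:alpha_beta_alphabeta}, namely
\[E(\alpha,\beta)=A+\alpha(C-A)+\beta(B-A)+\alpha\beta N, \qquad N=A-B-C+D.\]
The basic tool is the following observation. For fixed $\beta$, \eqref{eq:alpha_line} writes $E$ as an affine combination of $R_\beta=(1-\beta)A+\beta B$ and $S_\beta=(1-\beta)C+\beta D$. Hence $\mathbf{0}$ lies on the line they span if and only if
\[
f(\beta)=\det(R_\beta,S_\beta)=0 .
\]
This holds unless $R_\beta=S_\beta$, in which case $E$ is constant in $\alpha$. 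Symmetrically, using \eqref{eq:beta_line}, set
\[
g(\alpha)=\det\big((1-\alpha)A+\alpha C,\,(1-\alpha)B+\alpha D\big).
\]
Expanding by bilinearity gives
\[
f(\beta)=(1-\beta)^2p_{AC}+(1-\beta)\beta(p_{AD}+p_{BC})+\beta^2p_{BD}.
\]
In the variable $\beta$ this is $p_{AC}+\beta(p_{AD}+p_{BC}-2p_{AC})+\beta^2(p_{AC}+p_{CB}+p_{BD}+p_{DA})$. Its discriminant simplifies to $(p_{AD}+p_{BC})^2-4p_{AC}p_{BD}$. By the Plücker relation $p_{AB}p_{CD}-p_{AC}p_{BD}+p_{AD}p_{BC}=0$, this equals $(p_{AD}+p_{CB})^2-4p_{AB}p_{CD}=\Delta$. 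The same computation for $g$ gives the same $\Delta$.

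Next comes the generic case (vi). Here $AB\nparallel CD$ and $AC\nparallel BD$. Then $R_\beta\neq S_\beta$ except possibly for isolated $\beta$, and a solution $(\alpha,\beta)$ forces $f(\beta)=0$ and $g(\alpha)=0$. The leading coefficient of $f$ is $\det(B-A,D-C)\neq0$, and likewise that of $g$, so both are genuinely quadratic. The quadratic formula then gives the stated roots. Conversely, for each root $\beta$ the line through $R_\beta,S_\beta$ passes through $\mathbf 0$, so there is a unique $\alpha$ with $E(\alpha,\beta)=0$, and this $\alpha$ must be a root of $g$. The main obstacle is the pairing of roots, i.e.\ checking that $+\sqrt\Delta$ in $\alpha$ goes with $-\sqrt\Delta$ in $\beta$. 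I would settle it by a linear relation. Take the $2\times2$ determinant of $E(\alpha,\beta)=0$ against $N$, which kills the $\alpha\beta$ term. This gives
\[
\det(A,N)+\alpha\det(C-A,N)+\beta\det(B-A,N)=0,
\]
a linear relation between $\alpha$ and $\beta$. Substituting the root formulas and checking the sign of the $\sqrt\Delta$ coefficients fixes the pairing. If $N=0$ the quadrilateral is a parallelogram, which is a different case. I also need to handle the degenerate $\beta$ with $R_\beta=S_\beta$. At such $\beta$ we have $f(\beta)=0$, and $E=R_\beta$ must vanish, which should be consistent with the formula or else excluded. When $\Delta<0$, $f$ has no real root, so there is no real solution.

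The remaining cases are direct. Case (i) is trivial. In case (ii), $E$ takes values on $\ell$ and its coordinate along $\ell$ is a nonconstant bilinear function, so its zero set in $(\alpha,\beta)$ is a curve when $\mathbf 0\in\ell$, and empty otherwise. In case (iii), $N=0$, so $E$ is affine and $E=0$ is a $2\times2$ linear system. Cramer's rule gives $\alpha=\det(A,B-A)/\det(C-A,B-A)$ and a similar formula for $\beta$. Rewriting these in Plücker coordinates gives denominator $p_{BC}+p_{CA}+p_{AB}$, which is twice the signed area of $ABC$ and nonzero by noncollinearity. In case (iv), the leading coefficient $p_{AC}+p_{CB}+p_{BD}+p_{DA}=\det(C-A,D-B)$ of $g$ vanishes, so $g$ is linear. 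Its root is $\alpha=p_{AB}/(p_{AB}+p_{DC})$ when nondegenerate. Meanwhile $f$ is still quadratic, but one of its roots corresponds to the point at infinity, i.e.\ the direction of $\ell$. So exactly one finite $\beta$ survives, which is the stated formula. When $P=\mathbf0$, both $g$ vanishes identically and the $\beta$-equation pins $\beta$, giving the one-parameter family. Case (v) follows from case (iv) by applying $\Theta_1$ and Lemma~\ref{lem:Theta-action-on-solutions}.
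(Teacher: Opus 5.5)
Cases (i)--(iii), (v) and (vi) follow the paper's route: the quadratics $f$ and $g$, the Pl\"ucker relation to identify the discriminants, and the sign pairing via the determinant against $N=A-B-C+D$. Your case (iv), however, has a genuine gap.

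There $f$ is still a genuine quadratic, since its leading coefficient $\det(B-A,D-C)$ is nonzero. So both of its roots are finite, and neither ``corresponds to the point at infinity.'' It is $g$, now linear, that has lost a root to infinity, and that lost root corresponds to $\ell$. The extraneous root of $f$ is instead the parameter $\beta_0$ with $R_{\beta_0}=S_{\beta_0}$. Since $R_\beta-S_\beta=(1-\beta)(A-C)+\beta(B-D)$, with $A-C\parallel B-D$ and $N=(A-C)-(B-D)\neq\mathbf 0$, there is exactly one such $\beta_0$, and $R_{\beta_0}=S_{\beta_0}=P$. Hence $f(\beta_0)=\det(P,P)=0$ automatically, while $E(\alpha,\beta_0)=P$ for every $\alpha$.

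This is exactly the degenerate situation you left open in the generic case. It never occurs there, because $A-C$ and $B-D$ are independent. In case (iv) it governs the answer: $\beta_0$ gives the whole line $\{(\gamma,\beta_0)\}$ of solutions when $P=\mathbf 0$, and no solution when $P\neq\mathbf 0$. As written, your argument has three holes:
\begin{itemize}
\item you never compute the other root of $f$ to check that it equals $\frac{p_{AB}+p_{AC}+p_{DA}}{p_{AC}+p_{CB}+p_{BD}+p_{DA}}$;
\item your claim for $P=\mathbf 0$ that ``the $\beta$-equation pins $\beta$'' tacitly needs $f$ to have a double root at $\beta_0$;
\item the subcase $P\neq\mathbf 0$, $\mathbf 0\in\ell$ is only covered by ``when nondegenerate''. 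There $p_{AB}=p_{CD}\neq 0$, so $g$ is a nonzero constant, and $f$ again has a double root at $\beta_0$.
\end{itemize}

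The repair is already in your toolkit. In case (iv), $C-A$ is parallel to $N$, so your relation $\det(A,N)+\alpha\det(C-A,N)+\beta\det(B-A,N)=0$ loses its $\alpha$-term. It then gives directly $\beta=\det(A,B+C-D)/\det(A-B,C-D)$, which is the stated formula. Equivalently, write $S_\beta-R_\beta=\lambda(\beta)N$ with $\lambda$ affine and $\lambda(\beta_0)=0$. Then $f(\beta)=\lambda(\beta)\det(R_\beta,N)$, which separates the spurious root from the genuine one.

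Since $\{N,A-B\}$ is a basis of $\R^2$, $E(\alpha,\beta)=\mathbf 0$ is equivalent to this equation together with $\det(E,A-B)=0$. Using $p_{AD}+p_{CB}=p_{AB}+p_{CD}$, the latter becomes $p_{AB}+\alpha(p_{CD}-p_{AB})=0$, which is your linear $g$. Its three behaviours give the three subcases: one root; no root when $p_{AB}=p_{CD}\neq 0$; identically zero when $p_{AB}=p_{CD}=0$, i.e.\ $P=\mathbf 0$. This is precisely the paper's proof of (iv).

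There are also three smaller points elsewhere:
\begin{itemize}
\item In (iii), Cramer's rule gives $\alpha=-\det(A,B-A)/\det(C-A,B-A)$; without the minus sign you would get $-p_{AB}/(p_{BC}+p_{CA}+p_{AB})$.
\item The expression $p_{AC}+p_{CB}+p_{BD}+p_{DA}=\det(B-A,D-C)$ is the leading coefficient of $f$, not of $g$. The leading coefficient of $g$ is $p_{AB}+p_{BC}+p_{CD}+p_{DA}=\det(C-A,D-B)$.
\item Your form $p_{AB}p_{CD}-p_{AC}p_{BD}+p_{AD}p_{BC}=0$ of the Pl\"ucker relation is the correct one. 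The paper's printed version has a sign error, though its conclusion $\Delta_1=\Delta_2$ still holds.
\end{itemize}
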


\begin{proof}
If $A=B=C=D$, then $E(\alpha, \beta) = A$, from which \eqref{item:points_coincide} follows immediately.

If $A,B,C,D$ do not all coincide, and lie on a line, we have
\begin{align*}
    A &= P + aV,\\
    B &= P + bV,\\
    C &= P + cV,\\
    D &= P + dV,
\end{align*} 
for some point $P$ on the line, a non-zero vector $V$ parallel to the line, and $a,b,c,d\in\R$, not all equal. 
Equation~\eqref{alpha_beta_relations} then reduces to 
$$\mathbf{0} = P + (a + \alpha(c-a) + \beta(b-a) + \alpha \beta(a -b - c + d))V,$$
which has no solutions if $P$ is not in the span of $\{V\}$ (i.e. if the line does not pass through the origin). 
Otherwise, we may assume the point is the origin; $P=\mathbf{0}$.
In this case, Equation~\eqref{alpha_beta_relations} has the solution space 
\begin{align}
    \{(\alpha,\beta)\in \R^2\colon a + \alpha(c-a) + \beta(b-a) + \alpha \beta(a -b - c + d)=0\}, \label{eq:one_dim_soln}
\end{align}
which is one dimensional since if the coefficients of $\alpha$, $\beta$ and $\alpha\beta$ were zero, $a,b,c,d$ would all be equal.

To prove \eqref{item:parallelogram} - \eqref{item:generic_position}, we will use the antisymmetric bilinear form $\langl\;,\;\rangl: (\R^2)^2 \to \R$ defined by
\[\langl V,W\rangl = \det\left(\begin{smallmatrix}
            V_1 & W_1 \\
            V_2 & W_2
        \end{smallmatrix}\right) = p_{V,W}.\]
Note that $\{V,W\}$ is a basis for $\R^2$ if and only if $\langl V,W\rangl \neq 0$.

Suppose $ABDC$ is a non-degenerate parallelogram. Then for $V=B-A$ and $W=C-A$, we have $\langl V, W \rangl \neq 0$ and 
\begin{align*}
B &= A + V, \\
C &= A + W, \\
D &= A + V + W, \\
E(\alpha, \beta) &= A + \beta V + \alpha W\quad\quad(\text{by \eqref{eq:alpha_beta_alphabeta}}).
\end{align*}
Since $\{V,W\}$ is a basis for $\R^2$, $E(\alpha, \beta) = \mathbf{0}$ if and only if
\[\langl E(\alpha, \beta), V\rangl =\langl E(\alpha, \beta), W\rangl = 0. \]
which is equivalent to the system of equations
\begin{align*}
0 = \langl A, V \rangl + \alpha \langl W, V \rangl = \langl A, B \rangl + \alpha \langl C-A, B-A \rangl, \\
0 = \langl A, W \rangl + \beta \langl V, W \rangl = \langl A, C \rangl + \beta \langl B-A, C-A \rangl,
\end{align*}
which has the unique solution 
\begin{align*}
(\alpha, \beta) &= \left(-\frac{\langl A, B \rangl}{\langl C-A, B-A \rangl}, -\frac{\langl A, C \rangl}{\langl B-A, C-A \rangl}\right) \\
&=\left(\frac{p_{AB}}{p_{BC} + p_{CA} + p_{AB}}, \frac{p_{AC}}{p_{CB} + p_{BA} + p_{AC}}\right)
\\
&=\frac{1}{p_{BC} + p_{CA} + p_{AB}}\left(p_{AB}, p_{CA}\right).
\end{align*}

Now suppose $ABDC$ is a trapezium, with the edges $AC$ and $BD$ parallel, and $AB$ and $CD$ not parallel, so that $\langl A-C,B-D\rangl=0$ and $\langl A-B,C-D\rangl\neq 0$. We note for future reference that
\[0= \langl A-C,B-D\rangl =p_{AB} - p_{AD}- p_{CB} + p_{CD},\]
which implies
\begin{align}
p_{CB} + p_{AD} &= p_{AB} + p_{CD}. \label{eq:trapezium_plucker_identity}
\end{align}
Since $A-C$ and $B-D$ are both parallel to their difference $V = A-B-C+D$, solutions to Equation~\eqref{alpha_beta_relations} satisfy
\begin{align}
0 &= \langl E(\alpha, \beta), V \rangl \nonumber\\
&=\langl A -\beta (A-B) -\alpha(A - C) + \alpha \beta V, V\rangl \nonumber\\
&= \langl A, V \rangl - \beta\langl A-B, A-B -C+D\rangl \nonumber\\
&= -\langl A, B + C - D \rangl + \beta\langl A-B, C-D \rangl, \nonumber
\end{align}
and since $\langl A-B,C-D\rangl\neq 0$,
\begin{align}\beta = \frac{\langl A, B + C - D \rangl}{\langl A-B, C-D \rangl}=\frac{p_{AB}+p_{AC}+p_{DA}}{p_{AC} + p_{CB} +  p_{BD} + p_{DA}} \label{eq:beta_trapezium}
\end{align}
Now let $W = A-B$, and note that $\{V,W\}$ is a basis for $\R^2$, since $\langl V, W\rangl = \langl A-B,C-D\rangl\neq 0$. Note also that
\begin{align}
 \beta \langl V, W\rangl &= p_{AB}+p_{AC}+p_{DA}. \label{eq:beta_VW}
\end{align}
With $\beta$ as in \eqref{eq:beta_trapezium},
\[\langl E(\alpha, \beta), V \rangl = 0 \]
so the solutions to $E(\alpha, \beta)=\mathbf{0}$ will be given by \eqref{eq:beta_trapezium} and the value(s) of $\alpha$ which satisfies
\[\langl E(\alpha, \beta), W \rangl = 0. \]
We calculate:
\begin{align}
0 &= \langl E(\alpha, \beta), W \rangl \nonumber\\
&=\langl A -\beta (A-B) -\alpha(A - C) + \alpha \beta V, W\rangl \nonumber\\
&= \langl A, W \rangl - \alpha\langl A-C, W\rangl + \alpha\beta \langl V, W\rangl \nonumber\\
&= -\langl A, B \rangl - \alpha(\langl A-C, A-B\rangl -\beta \langl V, W\rangl )\nonumber\\
&= -p_{AB} - \alpha(p_{AC}+p_{CB}+p_{BA} - (p_{AB}+p_{AC}+p_{DA} ))\;\;\;(\text{by }\eqref{eq:beta_VW})\nonumber\\
&= -p_{AB} - \alpha(2p_{BA} + p_{CB}+ p_{AD} )\nonumber\\
&= -p_{AB} - \alpha(p_{BA} + p_{CD} )\;\;\;(\text{by }\eqref{eq:trapezium_plucker_identity}).\nonumber
\end{align}
Thus if $p_{BA} + p_{CD} = 0$, there is no solution for $\alpha$ unless $p_{AB} = 0$, in which case $\alpha$ can be arbitrary. Note that $p_{BA} + p_{CD} = p_{AB}=0$ imply $p_{CD}=0$, so both $A$ and $B$, and $C$ and $D$ are parallel, hence the lines through them intersect at $\mathbf{0}$.

Otherwise, if  $p_{BA} + p_{CD} \neq 0$,
\[\alpha = \frac{p_{AB}}{p_{AB} + p_{DC} }.\]

Finally, suppose neither pair of opposite edges of $ABDB$ are parallel, so that $\langl A-C,B-D\rangl\neq 0$ and $\langl A-B,C-D\rangl\neq 0$.

If $(\alpha, \beta)$ is a solution to Equation~\eqref{alpha_beta_relations}, then the sets
\begin{align}
\{(1-\alpha)A + \alpha C, (1-\alpha)B + \alpha D\}, \label{eq:linearly dependent_alpha} \\
\{(1-\beta)A + \beta B, (1-\beta)C + \beta D\}
\end{align}
are both linearly dependent. The linear dependence of \eqref{eq:linearly dependent_alpha} is equivalent to
\begin{align}
0 &= \langl(1-\alpha)A + \alpha C, (1-\alpha)B + \alpha D\rangl \nonumber\\
&= (1-\alpha)^2\langl A , B\rangl + (1-\alpha)\alpha(\langl A, D\rangl + \langl C, B\rangl) + \alpha^2\langl C, D\rangl. %
\end{align}
Note that if $a -b + c \neq 0$, the equation 
\[0 = a(1-x)^2 + b(1-x)x + cx^2 = (a-b+c)x^2 - (2a - b)x + a\]
has no real solutions in $x$ if the discriminant
\[\Delta = (2a-b)^2 - 4(a-b+c)a = b^2 - 4ac\]
is negative, and otherwise
\[x = \frac{2a-b \pm \sqrt{\Delta}}{2(a -b + c)}.\]

Since
\[\langl A , B\rangl - (\langl A, D\rangl + \langl C, B\rangl) + \langl C, D\rangl = \langl A - C, B - D\rangl \neq 0,\]
we conclude that if 
\begin{align}
\Delta_1 &= (\langl A, D\rangl - \langl C, B\rangl)^2 - 4\langl A , B\rangl\langl C, D\rangl \nonumber \\
&= (p_{AD} + p_{CB})^2 - 4p_{AB}p_{CD} 
\end{align}
is non-negative, solutions to Equation~\eqref{alpha_beta_relations} must have
\begin{align}\alpha 
&= \frac{ 2\langl A , B\rangl - \langl A, D\rangl - \langl C, B\rangl + \sigma_1 \sqrt{\Delta_1}}{2(\langl A -C , B - D\rangl)} \label{eq:alpha_generic_bilinear} \\
& = \frac{ 2p_{AB} + p_{BC} + p_{DA} + \sigma_1 \sqrt{\Delta_1}}{2(p_{AB} + p_{BC} + p_{CD} + p_{DA})} \label{eq:alpha_generic_plucker} \\
\sigma_1 & \in \{1,-1\} \nonumber
\end{align}
and otherwise, if $\Delta_1<0$, there are no solutions.

By symmetry, if 
\begin{align}
\Delta_2 &= (\langl A, D\rangl - \langl B, C\rangl)^2 - 4\langl A , C\rangl\langl B, D\rangl \nonumber \\
&= (p_{AD} + p_{BC})^2 - 4p_{AC}p_{BD}
\end{align}
is non-negative, solutions to Equation~\eqref{alpha_beta_relations} must have
\begin{align}
\beta &= \frac{ 2\langl A , C\rangl - \langl A, D\rangl - \langl B, C\rangl + \sigma_2 \sqrt{\Delta_2}}{2(\langl  A - B , C - D\rangl)} \label{eq:beta_generic_bilinear} \\
&= \frac{ 2p_{AC} + p_{CB} + p_{DA} + \sigma_2 \sqrt{\Delta_2}}{2(p_{AC} + p_{CB} + p_{BD} + p_{DA})} \label{eq:beta_generic_plucker}\\
\sigma_2 & \in \{1,-1\} \nonumber
\end{align}
and otherwise, if $\Delta_2<0$, there are no solutions.

Observe that the Pl\"ucker relation $p_{AD}p_{BC} = p_{AB}p_{CD} + p_{AC}p_{BD}$ implies $\Delta_1 = \Delta_2$, so we can define $\Delta=\Delta_1=\Delta_2$.

If $\Delta > 0$, the two choices of sign in \eqref{eq:alpha_generic_plucker} and \eqref{eq:beta_generic_plucker} produce four potential solutions 
\begin{align}
(\alpha, \beta) &= \left(\frac{ 2p_{AB} + p_{BC} + p_{DA} + \sigma_1 \sqrt{\Delta}}{2(p_{AB} + p_{BC} + p_{CD} + p_{DA})}, \frac{ 2p_{AC} + p_{CB} + p_{DA} + \sigma_2 \sqrt{\Delta}}{2(p_{AC} + p_{CB} + p_{BD} + p_{DA})} \right)\\
\sigma_1, \sigma_2 &\in \{1,-1\} \nonumber
\end{align}
and it remains to determine which combinations of sign choices (if any) satisfy Equation~\eqref{alpha_beta_relations}.

For $\alpha$ defined by Equation~\eqref{eq:alpha_generic_plucker}, $(1-\alpha)A + \alpha C$ and $(1-\alpha)B + \alpha D$ lie on a common line $\ell$ through the origin, so for any vector $U$ not on the line, we have 
$$E(\alpha,\beta)= \mathbf{0} \iff \langl E(\alpha, \beta), U\rangl = 0.$$
We claim that $V = A - B - C + D$ is not on $\ell$. Note that
\[[(1-\alpha)A + \alpha C] - [(1-\alpha)B + \alpha D]  = A -B  -\alpha V\]
lies on $\ell$, and
\[\langl A -B  -\alpha V, V\rangl = \langl A-B, D-C\rangl \neq 0,\]
hence $V \not\in \ell$.

In the calculation below, we will need the fact that
\begin{align*}
\alpha\langl A-C, B - D\rangl &= \frac{1}{2}\left( 2\langl A , B\rangl - \langl A, D\rangl - \langl C, B\rangl + \sigma_1 \sqrt{\Delta}\right) \\
&= \frac{1}{2}\left(2p_{AB} + p_{BC} + p_{DA} + \sigma_1 \sqrt{\Delta}\right)\\
\beta\langl A-B, C - D\rangl &= \frac{1}{2}\left( 2\langl A , C\rangl - \langl A, D\rangl - \langl B, C\rangl + \sigma_2 \sqrt{\Delta}\right) \\
&= \frac{1}{2}\left(2p_{AC} + p_{CB} + p_{DA} + \sigma_2 \sqrt{\Delta}\right)
\end{align*}
which follows from \eqref{eq:alpha_generic_bilinear} and \eqref{eq:beta_generic_bilinear}.
\begin{align*}
0 &= \langle\langl E(\alpha, \beta), V \rangl \nonumber\\
&=\langl A -\alpha(A - C) -\beta (A-B) + \alpha \beta V, V\rangl \nonumber\\
&= \langl A, V \rangl - \alpha\langl A-C, V\rangl - \beta \langl A-B, V\rangl \nonumber\\
&= \langl A, -B-C+D \rangl + \alpha\langl A-C, B - D\rangl + \beta \langl A-B, C - D\rangl \nonumber\\
&= -(p_{AB} + p_{AC} + p_{DA}) + \frac{1}{2}\left(2p_{AB} + p_{BC} + p_{DA} + \sigma_1 \sqrt{\Delta}\right)\\
&\hspace{110pt} +\frac{1}{2}\left(2p_{AC} + p_{CB} + p_{DA} + \sigma_2 \sqrt{\Delta}\right) \nonumber\\
&= \frac{\sigma_1 + \sigma_2}{2} \sqrt{\Delta}.
\end{align*}
Thus $\sigma_1$ and $\sigma_2$ must be of opposite sign. 

If $\Delta=0$, there is only one potential solution, and the calculation above shows that it is valid.
\end{proof}

As illustrated in Table~\ref{table:solutions-alpha-beta-examples},
solutions $(\alpha,\beta)$ to Equation~\eqref{alpha_beta_relations} behave in many different ways:
sometimes the solutions are complex, 
sometimes they are real but not in $[0,1]^2$,
sometimes they are real but only one of them is in $[0,1]^2$,
sometimes they are real and both solutions are in $[0,1]^2$,
sometimes the two solutions are rational
and sometimes there is only one solution and it must be rational.

\begin{table}
\[
    \begin{array}{c|c|c}
        \ABCD{A_1}{A_2}{B_1}{B_2}{C_1}{C_2}{D_1}{D_2}
        &\Delta
		&\text{solutions } (\alpha,\beta)\\[1mm]
		\hline
		&&\\[-2mm]
        \ABCD{0}{2}{1}{-2}{1}{0}{0}{-1}
        & -4 &
        (1+i,\frac{3}{5}-\frac{1}{5} i) \in\C^2\setminus\R^2
        \\
		&&
        (1-i,\frac{3}{5}+\frac{1}{5} i) \in\C^2\setminus\R^2
		\\[2mm]
		\hline
		&&\\[-2mm]
        \ABCD{-1}{1}{4}{2}{3}{-2}{-3}{-4}
        & 72 &
		(\frac{19}{49}+\frac{3}{49} \, \sqrt{2},2+\frac{3}{2} \, \sqrt{2}) 
		\approx (0.47,4.12)\in\R^2\setminus[0,1]^2\\
		&&
		(\frac{19}{49}-\frac{3}{49} \, \sqrt{2},2-\frac{3}{2} \, \sqrt{2}) 
		\approx (0.30,-0.12)\in\R^2\setminus[0,1]^2
		\\[2mm]
		\hline
		&&\\[-2mm]
        \ABCD{-3}{-2}{-1}{-2}{0}{3}{4}{-2}
        & 481 & 
        (\frac{9}{50}+\frac{1}{50} \, \sqrt{481},\frac{29}{20}-\frac{1}{20} \, \sqrt{481})
        \approx (0.62,0.35)\in[0,1]^2\setminus\Q^2\\
		&=13\cdot 7&                                                         
        (\frac{9}{50}-\frac{1}{50} \, \sqrt{481},\frac{29}{20}+\frac{1}{20} \, \sqrt{481})
        \approx (-0.26,2.55)\in\R^2\setminus[0,1]^2
		\\[2mm]
		\hline
		&&\\[-2mm]
        \ABCD{-8}{6}{7}{-3}{2}{0}{-7}{-1}
        & 1792 & 
		(\frac{5}{8}+\frac{1}{8} \, \sqrt{7},\frac{5}{12}-\frac{1}{12} \, \sqrt{7}) 
		\approx (0.96,0.20)\in[0,1]^2\setminus\Q^2\\
		&=2^8\cdot 7&                                         
		(\frac{5}{8}-\frac{1}{8} \, \sqrt{7},\frac{5}{12}+\frac{1}{12} \, \sqrt{7}) 
		\approx (0.29,0.64)\in[0,1]^2\setminus\Q^2
		\\[2mm]
		\hline
		&&\\[-2mm]
        \ABCD{-1}{7}{-3}{5}{1}{-7}{5}{-9}
        & 100 &
		(\frac{8}{21},\frac{5}{6}) \approx (0.38,0.83) \in[0,1]^2\cap\Q^2\\
        &&
		(\frac{1}{2},0) \approx (0.50,0.00) \in[0,1]^2\cap\Q^2
		\\[2mm]
		\hline
		&&\\[-2mm]
        \ABCD{-3}{-2}{4}{-2}{-1}{-2}{0}{3}
        & 169 &
        (\frac{7}{10},\frac{4}{7}) \approx (0.70,0.57) \in[0,1]^2\cap\Q^2\\
        &&
        (2,\frac{1}{5}) \approx (2.00,0.20) \in\Q^2\setminus[0,1]^2
		\\[2mm]
		\hline
		&&\\[-2mm]
        \ABCD{-9}{-3}{-2}{8}{7}{9}{0}{-2}
        & 4096 &
		(\frac{39}{46},\frac{15}{16}) \approx (0.85,0.94) \in[0,1]^2\cap\Q^2\\
        &&
        \text{only one solution because $ABDC$ is a parallelogram}\\
		\hline
		&&\\[-2mm]
        \ABCD{0}{2}{2}{2}{1}{1}{2}{1}
        & 0 &
        \text{no solution because $ABDC$ is a trapezium with $AB\parallel CD$}\\
        &&\text{(case (v) in Prop.\ref{prop:alpha_beta_solutions})
              where $P=(2,0)$ and line $\ell$ passes through $\mathbf{0}$}\\
		\hline
		&&\\[-2mm]
        \ABCD{0}{0}{0}{0}{1}{0}{0}{1}
        & 0 &
        \{(0,\beta)\mid \beta\in\R\} 
        \text{ is a 1-dimensional solution space because $ABDC$ }\\
        &&
        \text{is a trapezium with $AB\parallel CD$
        (case (v) in Prop.\ref{prop:alpha_beta_solutions})
              where $P=\mathbf{0}$}\\
    \end{array}
\]
\caption{Solutions of Equation~\eqref{alpha_beta_relations} illustrating
some typical examples.}
\label{table:solutions-alpha-beta-examples}
\end{table}

We finish this section by observing that the quadrilateral $ABDC$ circumscribed
to a parabola reminds one of Poncelet's closure theorem \cite{zbMATH04030361}.
In geometry, this theorem states that whenever a polygon is inscribed in one conic
section and circumscribes another one, the polygon must
be part of an infinite family of polygons that are all inscribed in and
circumscribe the same two conics.
The special case of quadrilaterals, circles and parabolas is treated in more details in
\cite{zbMATH08141186}.

\subsection{Every pair of quadratic number field elements is a solution}
\label{sec:every-pair-of-quadratic-elements}
    
One can prove more generally that if $\alpha, \beta$ are two elements of the
same quadratic number field, then there exist $A,B,C,D \in \Z^2$ such that
$(\alpha,\beta)$ is a solution to Equation~\eqref{alpha_beta_relations}. 

\begin{lemma}\label{lem:a-solution-for-every-pair-of-quadratic-numbers}
For every pair $(\alpha,\beta)$ of elements of a quadratic number field $K$,
there exist four non-collinear points $A,B,C,D \in \Z^2$ such that
$(\alpha,\beta)$ is a solution to Equation~\eqref{alpha_beta_relations}. 

We can choose $A,B,C,D$ so that opposite sides of $ABDC$ are not parallel
if and only if $\alpha$ and $\beta$ are both irrational or both rational.
\end{lemma}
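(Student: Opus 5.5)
The key observation is that Equation~\eqref{alpha_beta_relations} is linear in the coordinates of the four points. Write
\[
w=(w_A,w_B,w_C,w_D)=\bigl((1-\alpha)(1-\beta),\,(1-\alpha)\beta,\,\alpha(1-\beta),\,\alpha\beta\bigr)\in K^4 .
\]
Then Equation~\eqref{alpha_beta_relations} says exactly that for each $i\in\{1,2\}$ the row $(A_i,B_i,C_i,D_i)$ satisfies $A_iw_A+B_iw_B+C_iw_C+D_iw_D=0$. Since $K$ is a $2$-dimensional $\Q$-vector space, the $\Q$-linear map $\Q^4\to K$, $x\mapsto \sum x_sw_s$, has rank at most $2$. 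Its kernel therefore has dimension at least $2$, and is $3$-dimensional when $\alpha,\beta\in\Q$. I will pick two linearly independent kernel vectors, clear denominators, and use them as the two rows of $\ABCD{A_1}{A_2}{B_1}{B_2}{C_1}{C_2}{D_1}{D_2}\in\Z^{2\times4}$. This matrix has rank $2$, so the points are not all on a line through $\mathbf{0}$. They also cannot lie on an affine line missing the origin, because by Proposition~\ref{prop:alpha_beta_solutions}\eqref{item:points_collinear} the equation would then have no solution, whereas $(\alpha,\beta)$ is one. Hence $A,B,C,D$ are non-collinear, which proves the first claim.

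For the second claim I will read off rationality from Proposition~\ref{prop:alpha_beta_solutions}, using that all Pl\"ucker coordinates $p_{XY}$ are integers.
\begin{itemize}
\item If $ABDC$ is a parallelogram, case \eqref{item:parallelogram} makes both $\alpha$ and $\beta$ rational.
\item If exactly one pair of opposite sides is parallel, say $AC\parallel BD$, then case \eqref{item:trapezium} forces $\beta$ to equal a ratio of integers. This holds even in the subcase $P=\mathbf{0}$, where $\alpha$ is free but $\beta$ is still fixed by that formula; the symmetric case forces $\alpha\in\Q$.
\item In the generic case \eqref{item:generic_position}, $\alpha=(r_1\pm\sqrt\Delta)/s_1$ and $\beta=(r_2\mp\sqrt\Delta)/s_2$ with integers $r_j$ and $s_j\neq0$. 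So $\alpha\in\Q\iff\sqrt\Delta\in\Q\iff\beta\in\Q$.
\end{itemize}
The third item gives the forward direction: if opposite sides are not parallel, then $\alpha$ and $\beta$ are both rational or both irrational. It also gives half of the converse. If $\alpha$ and $\beta$ are both irrational, the first two items show that no pair of opposite sides can be parallel, since otherwise one of them would be rational. So any choice of kernel basis works in this case.

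The remaining and main obstacle is the case $\alpha,\beta\in\Q$. Here the kernel $N\subset\Q^4$ is $3$-dimensional, and I must choose two rows $x,y\in N$ so that $\langl A-C,B-D\rangl\neq0$ and $\langl A-B,C-D\rangl\neq0$. Each of these is a bilinear form in $(x,y)$: for instance $(x_A-x_C)(y_B-y_D)-(y_A-y_C)(x_B-x_D)$. So the bad set is a union of two proper Zariski-closed subsets of $N\times N$, provided neither form vanishes identically on $N\times N$. Once that is known, a generic rational choice of $(x,y)$, scaled to be integral, avoids both forms and automatically has rank $2$.

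To show a form such as $\langl A-C,B-D\rangl$ is not identically zero, it suffices to show that the two linear functionals $x\mapsto x_A-x_C$ and $x\mapsto x_B-x_D$ are linearly independent when restricted to $N$. Since $N$ is a hyperplane $w^\perp$, this fails only if some combination of $e_A-e_C$ and $e_B-e_D$ is a multiple of $w$. I will check this directly using $\sum_s w_s=1$. Any such combination has coordinate sum $0$, while $w$ has sum $1$, so the multiple must be $0$. This should rule out degeneracy, but I will double-check edge cases such as $\alpha\in\{0,1\}$, where some $w_s$ vanish. The same argument applies to $\langl A-B,C-D\rangl$. This yields the required choice and completes the lemma.
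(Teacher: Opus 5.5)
Your proposal is correct. For the first claim, for the both-irrational case, and for the direction ``non-parallel implies same rationality'' it follows the paper: the paper also takes two integer rows orthogonal to the $\Q$-span of the coefficient vector $\xi$ (your $w$), and reads rationality off the formulas of Proposition~\ref{prop:alpha_beta_solutions}. There are two places where your route differs.

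\emph{Non-collinearity.} The paper argues directly: if the columns were collinear, then with $M$ of rank $2$ the vector $(1,1,1,1)$ would lie in the row space of $M$, giving $1=(1,1,1,1)\cdot\xi=uM\xi=0$. You instead cite case~(ii) of the proposition. This is the same fact in geometric form, and its proof works verbatim for complex $\alpha,\beta$ if $K$ is imaginary.

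\emph{The rational case (the genuine difference).} The paper uses a translation trick. It takes any integer $M$ whose columns form a non-collinear quadrilateral with non-parallel opposite sides, and replaces it by $M'=M-M\xi\,(1,1,1,1)$. This translates all four points by $-M\xi$, so the shape is preserved, and $M'\xi=0$ holds because $(1,1,1,1)\cdot\xi=1$; it then scales to integers. Your argument is by genericity instead. The two non-parallelism conditions are antisymmetric bilinear forms on $N\times N$, and they are not identically zero because no nontrivial combination of $e_A-e_C$ and $e_B-e_D$ is a multiple of $w$ (coordinate sums $0$ versus $1$). This is sound. It never uses that any $w_s$ is nonzero, so the edge cases $\alpha\in\{0,1\}$ you planned to double-check need no separate treatment.

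Both arguments rest on the same fact, $(1,1,1,1)\notin w^\perp$, i.e.\ $\Q^4=N\oplus\Q(1,1,1,1)$; your forms only see rows modulo $(1,1,1,1)$. The paper's version is explicit and shows that every quadrilateral shape is realizable up to translation and scaling. Yours shows that a generic kernel basis works, which makes it easy to impose further open nondegeneracy conditions at the same time.
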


\begin{proof}
Let $\alpha,\beta\in K$ for some quadratic number field $K=\Q(\sqrt{d})$
for some square-free integer $d$ different from $0$ and $1$.
Let 
\[
    \xi = ((1-\alpha)(1-\beta), (1-\alpha)\beta, \alpha(1-\beta), \alpha\beta)^T \in K^4
\]
satisfying $(1,1,1,1)\cdot \xi = 1$.
Since $\{1, \sqrt{d} \}$ is a basis for $K$ considered as a rational vector space, 
there exists $V\in \Q^{4\times 2}$ such that $\xi=V\cdot (1,\sqrt{d})^T$.
The dimension of $\text{ColumnSpace}(V)^{\perp}$ is at least $4-2 = 2$. 
Thus, let $M=\ABCD{A_1}{A_2}{B_1}{B_2}{C_1}{C_2}{D_1}{D_2}\in \Z^{2\times 4}$ 
be any matrix whose rows are two linearly independent integer vectors 
in $\text{ColumnSpace}(V)^{\perp}$. 
The matrix $M$ has rank 2 and its row space is orthogonal to the column space of $V$, that is,
$ MV = \mathbf{0}\in \Q^{2\times2}$. Therefore,
Equation~\eqref{alpha_beta_relations} is satisfied:
\begin{align*}
    &(1-\alpha)\left[(1-\beta)\left(\begin{smallmatrix} A_1\\A_2 \end{smallmatrix}\right) 
                   +\beta    \left(\begin{smallmatrix} B_1\\B_2 \end{smallmatrix}\right) \right]
       +\alpha\left[(1-\beta)\left(\begin{smallmatrix} C_1\\C_2 \end{smallmatrix}\right) 
                   +\beta    \left(\begin{smallmatrix} D_1\\D_2 \end{smallmatrix}\right) \right]\\
    &=M\xi
     =M\cdot V\cdot\left(\begin{smallmatrix} 1\\\sqrt{d} \end{smallmatrix}\right)
     = \mathbf{0}\cdot\left(\begin{smallmatrix} 1\\\sqrt{d} \end{smallmatrix}\right)
     = \left(\begin{smallmatrix} 0\\0 \end{smallmatrix}\right).
\end{align*}
Suppose the columns $A,B,C,D$ of $M$ are collinear. 
Then the span of $\text{RowSpace}(M) \cup \{(1,1,1,1)\}$ has dimension less than 3, and since $\dim_\Q \text{RowSpace}(M) = 2$, 
we have $(1,1,1,1) \in \text{RowSpace}(M)$.
Thus, there exists $u\in\Z^{2\times1}$ such that $u\cdot M=(1,1,1,1)$.
However, 
\[
    1
    =(1,1,1,1)\cdot \xi 
    =u M\cdot V\left(\begin{smallmatrix} 1\\\sqrt{d} \end{smallmatrix}\right) 
    =u \cdot \mathbf{0}\cdot\left(\begin{smallmatrix} 1\\\sqrt{d} \end{smallmatrix}\right) 
    =0 
\]
a contradiction. We conclude that the columns of $M$ are non-collinear.

Now suppose that $\alpha$ and $\beta$ are both irrational. Since the columns of $M$ are non-collinear, cases (i) and (ii) of Proposition~\ref{prop:alpha_beta_solutions} do not hold. Since neither $\alpha$ nor $\beta$ is rational, we can exclude cases (iii) - (v), so (vi) must hold.

If both $\alpha$ and $\beta$ are rational, the first column of $V$ is $\xi$, and the second is zero. Thus the rows of $M$ are only required to be orthogonal to a single vector, $\xi$. Let $M \in \Z^{2\times 4}$ be arbitrary, and set $M' = M - M\cdot \xi\cdot (1,1,1,1)$.
Then 
\begin{align*}
M'\cdot \xi &= (M- M\cdot \xi\cdot (1,1,1,1))\cdot \xi = M\cdot \xi - M\cdot \xi\cdot (1,1,1,1)\cdot \xi = M\cdot \xi - M\cdot \xi\cdot 1 = \mathbf{0}.
\end{align*}
Since the columns of $M'$ are obtained by translating the columns of $M$ by $M\cdot \xi \in \Q^2$, the shape of the quadrilateral determined by the columns is unchanged, so we can certainly ensure that opposite sides are not parallel. By multiplying $M'$ by an integer, we can obtain a matrix in $\Z^{2\times 4}$ whose columns determine $A, B, C, D$.

Conversely, suppose exactly one of $\alpha$ and $\beta$ is rational; without loss of generality, we may assume it is $\beta$. From Proposition~\ref{prop:alpha_beta_solutions} and its proof, (iv) in the proposition holds, with the lines through $AB$ and $CD$ passing through the origin. Thus the quadrilateral $ABDC$ has a pair of parallel opposite sides.
\end{proof}

\section{Stripe densities always exist and satisfy Equation~\eqref{alpha_beta_relations}}
\label{sec:stripes-densities-satisfy-equation}

In this section, we prove a stronger result than Theorem~\ref{thm:not-periodic}.
Under the same hypothesis, we show that densities of stripes in valid
configurations over a set of Wang tiles that determines the quadrilateral $ABDC$ 
must exist and be a solution to Equation~\eqref{alpha_beta_relations}.

\begin{proposition}\label{prop:sufficient-condition-quadratic-equation}
Let $\Tcal\subset I\times J\times I\times J$ be a set of Wang tiles
having horizontal and vertical stripes with partition
$ \Tcal = \Tcal_\square \cup \Tcal_\boxbar \cup \Tcal_\boxminus \cup \Tcal_\boxplus$.
Suppose that $\Tcal$ determines the quadrilateral $ABDC$ for some
$A,B,C,D\in\R^2$.
If Equation~\eqref{alpha_beta_relations} has finitely many solutions,
then for every valid configuration $w:\Z^2\to\Tcal$ in $\Omega_\Tcal$,
the density $\alpha$ of tiles with vertical stripes (tiles in
    $\Tcal_\boxbar\cup\Tcal_\boxplus$)
    and
    the density $\beta$ of tiles with horizontal stripes (tiles in
    $\Tcal_\boxminus\cup\Tcal_\boxplus$)
    both exist, and $(\alpha,\beta)$ is a solution to
    Equation~\eqref{alpha_beta_relations}.
\end{proposition}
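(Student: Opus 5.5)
The plan is to redo the telescoping argument of Lemma~\ref{lem:doubly-periodic-densities-satisfy-equation} on large finite rectangles instead of a period domain. This produces an approximate version of Equation~\eqref{alpha_beta_relations}. Finiteness of the solution set, together with a connectedness argument, then forces the stripe densities to exist.

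Fix a valid configuration $w$, and let $R=[a,a+m)\times[b,b+n)$ be any rectangle. Let $\alpha_R$ be the fraction of columns $i\in[a,a+m)$ that carry vertical stripes; this is well defined because whether $w_{(i,j)}\in\Tcal_\boxbar\cup\Tcal_\boxplus$ depends only on $i$. Similarly, let $\beta_R$ be the fraction of rows $j\in[b,b+n)$ that carry horizontal stripes.

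\emph{Step 1 (exact product counts).} The stripe type of the tile at $(i,j)$ is determined by the pair (column $i$ striped or not, row $j$ striped or not). Hence the counts of the four types in $R$ are exactly $mn(1-\alpha_R)(1-\beta_R)$, $mn(1-\alpha_R)\beta_R$, $mn\,\alpha_R(1-\beta_R)$ and $mn\,\alpha_R\beta_R$. Summing $\vv(a)+\hh(b)-\vv(c)-\hh(d)$ over $R$ therefore gives $mn\,E(\alpha_R,\beta_R)$, as in \eqref{eq:first-equation-in-proof-KxK_domain}.

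\emph{Step 2 (telescoping with boundary error).} Validity of $w$ makes the same sum telescope along rows and columns, as in \eqref{eq:sum_is_zero_in_KxK_domain}. This time it leaves boundary terms: at most $2n$ terms of the form $\vv(\cdot)$ and $2m$ of the form $\hh(\cdot)$. Let $c$ be the maximum norm of a label vector, which is finite since $I$ and $J$ are finite. Then
\[
  \Vert E(\alpha_R,\beta_R)\Vert \le 2c\left(\tfrac1m+\tfrac1n\right).
\]

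\emph{Step 3 (limit points lie in the solution set).} Let $L_\alpha\subset[0,1]$ be the set of limits of $\alpha_R$ over horizontal windows $[a,a+m)$ with $m\to\infty$, where the positions $a$ are allowed to vary. Define $L_\beta$ analogously for vertical windows. Both sets are nonempty by compactness. The column fraction and the row fraction are chosen independently of each other, and $E$ is continuous. Taking limits in Step 2 along the two parameters separately therefore gives
\[
  L_\alpha\times L_\beta\subseteq \mathcal S:=\{(\alpha,\beta): E(\alpha,\beta)=\mathbf 0\},
\]
and $\mathcal S$ is finite by hypothesis.

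\emph{Step 4 (connectedness, the main obstacle).} I claim $L_\alpha$ is an interval. Shifting a window of length $m$ by one position changes the average of the $0$/$1$ column-indicator sequence by at most $1/m$. So for fixed large $m$, the averages at positions running from one window to another fill in all intermediate values up to gaps of size $1/m$. Combined with a diagonal argument, this shows that $L_\alpha$ is the closed interval between the lower and upper Banach densities. The same holds for $L_\beta$.

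A product of two nonempty intervals contained in the finite set $\mathcal S$ must be a single point $(\alpha,\beta)$. It follows that the averages over every window of length tending to infinity converge to $\alpha$ and $\beta$ respectively. So both densities exist, uniformly and in the strongest (Banach) sense, and $(\alpha,\beta)$ solves Equation~\eqref{alpha_beta_relations}. The care needed is in Step 4, namely making the interval property of the limit set precise.
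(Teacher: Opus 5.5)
Your proof is correct, and it reorganizes the paper's argument in a useful way.

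\emph{The paper's route.} It follows the single joint sequence $x_K=(\alpha_K,\beta_K)$ of stripe densities on the centered squares $S_K$. Telescoping gives $E(x_K)=\Ocal(1/K)$, one checks that $\Vert x_{K+1}-x_K\Vert\to0$, and a sequence with vanishing increments whose limit points all lie in a finite set must converge.

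\emph{Your route.} You use the fact that the column window and the row window of a rectangle can be chosen independently. Your rectangle bound then gives the product inclusion $L_\alpha\times L_\beta\subseteq\mathcal S$, and you run the connectedness argument on each one-dimensional limit set separately.

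\emph{What each buys.} The paper's route is shorter, since it needs only one sequence and one increment estimate. Yours gives more.
\begin{itemize}
\item The densities exist as Banach densities: averages over any windows of length tending to infinity converge. This includes the limits along $S_K$ used in the paper.
\item Finiteness of $\mathcal S$ is more than you need. $L_\beta$ is forced to be a single point as soon as $\mathcal S$ contains no segment $\{x\}\times[y_1,y_2]$ with $y_1<y_2$, and symmetrically for $L_\alpha$.
\item For example, take case (iv) of Proposition~\ref{prop:alpha_beta_solutions} with $P=\mathbf 0$, where $\mathcal S=\R\times\{\beta_0\}$. Your argument shows that the horizontal stripe density exists and equals $\beta_0$. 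This is exactly what the remark following the proof of Theorem~\ref{mainthm:an-aperiodic-tileset-for-every-quadratic-number} conjectures.
\end{itemize}

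\emph{A detail to write out in Step 4.} Sliding a window only interpolates between windows of the \emph{same} length. So first replace the long windows whose averages are near $\inf L_\alpha$ and $\sup L_\alpha$ by windows of a common shorter length $\ell$. To do this, cut a window of length $m\gg\ell$ into blocks of length $\ell$. Some block has average at most (resp.\ at least) that of the whole window, up to an error $\Ocal(\ell/m)$. Then slide between these blocks at fixed length $\ell$.

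Alternatively, use only the centered windows $[-K,K]$ together with rectangles $[-K,K]\times[-K',K']$ with $K,K'$ independent. The centered averages have $\Ocal(1/K)$ increments, so their limit set is automatically the interval $[\liminf,\limsup]$. This is simpler, but you lose the uniformity over arbitrary windows.
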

\begin{proof}
    Let $w:\Z^2\to\Tcal$ be a valid configuration over the set of tiles $\Tcal$.
    For every integer $K\geq0$, we 
    consider the subdomain $S_K=\{(i,j)\in\Z^2\colon-K\leq i,j\leq K\}$
    of cardinality $(2K+1)^2$.
    For every integer $K\geq0$, let 
    \[
        \alpha_K = \frac{\#\left(w^{-1}(\Tcal_\boxbar \cup \Tcal_\boxplus)\cap S_K\right)}
                        {(2K+1)^2}
        \quad
        \text{ and }
        \quad
        \beta_K  = \frac{\#\left(w^{-1}(\Tcal_\boxminus \cup \Tcal_\boxplus)\cap S_K\right)}
                        {(2K+1)^2}
    \]
    be respectively 
    the density of vertical stripes 
    (tiles in $\Tcal_\boxbar \cup \Tcal_\boxplus$) 
    and
    the density of horizontal stripes 
    (tiles in $\Tcal_\boxminus \cup \Tcal_\boxplus$) 
    in the configuration $w$
    restricted to the subdomain $S_K$.
    Thus, in the domain $S_K$, the density of tiles in the configuration $w$ 
    belonging to each subset of the partition 
    $ \Tcal = \Tcal_\square \cup \Tcal_\boxbar \cup \Tcal_\boxminus \cup \Tcal_\boxplus$
    is given by the following formulas:
    \begin{align*}
        \frac{\#\left(w^{-1}(\Tcal_\boxminus)\cap S_K\right)}{\# S_K}
                         &=(1-\alpha_K)\beta_K,
        &
        \frac{\#\left(w^{-1}(\Tcal_\square)\cap S_K\right)}{\# S_K}
        &=(1-\alpha_K)(1-\beta_K),\\
        \frac{\#\left(w^{-1}(\Tcal_\boxplus)\cap S_K\right)}{\# S_K}
        &=\alpha_K\beta_K,    
        &\frac{\#\left(w^{-1}(\Tcal_\boxbar)\cap S_K\right)}{\# S_K}
        &=\alpha_K(1-\beta_K).
    \end{align*}

    In what follows, we identify $w_n$ with its matrix encoding $M^{\vv,\hh}(w_n)$
    following Section~\ref{sec:wang-tiles-encoded-as-nx4-matrices}.
    On the one hand, for every integer $K\geq0$, we have
    \begin{equation}\label{eq:first-equation-in-proof}
\begin{aligned}
    \frac{1}{\#S_K}
    \sum_{n\in S_K}
        U\cdot w_n
        \left(\begin{smallmatrix}
        1\\1\\-1\\-1
        \end{smallmatrix}\right)
    &=
    \frac{1}{\#S_K}
    \Big(
        \left(\begin{smallmatrix}
        A_1\\A_2
        \end{smallmatrix}\right)
        \cdot \#\{n\in S_K\colon w_n\in\Tcal_\square\}  
        +
        \left(\begin{smallmatrix}
        B_1\\B_2
        \end{smallmatrix}\right)
        \cdot \#\{n\in S_K\colon w_n\in\Tcal_\boxminus\}  \\
        &\qquad+
        \left(\begin{smallmatrix}
        C_1\\C_2
        \end{smallmatrix}\right)
        \cdot \#\{n\in S_K\colon w_n\in\Tcal_\boxbar\}
        +
        \left(\begin{smallmatrix}
        D_1\\D_2
        \end{smallmatrix}\right)
        \cdot \#\{n\in S_K\colon w_n\in\Tcal_\boxplus\}\Big)\\
    &= 
     \left(\begin{smallmatrix} A_1\\A_2 \end{smallmatrix}\right)     (1-\alpha_K)(1-\beta_K)
    +\left(\begin{smallmatrix} B_1\\B_2 \end{smallmatrix}\right) (1-\alpha_K)\beta_K
    +\left(\begin{smallmatrix} C_1\\C_2 \end{smallmatrix}\right) \alpha_K(1-\beta_K)
    +\left(\begin{smallmatrix} D_1\\D_2 \end{smallmatrix}\right)     \alpha_K\beta_K\\
    &= 
     (1-\alpha_K)\left[\left(\begin{smallmatrix} A_1\\A_2 \end{smallmatrix}\right)     (1-\beta_K)
     +\left(\begin{smallmatrix} B_1\\B_2 \end{smallmatrix}\right) \beta_K\right]
     +\alpha_K\left[\left(\begin{smallmatrix} C_1\\C_2 \end{smallmatrix}\right) (1-\beta_K)
         +\left(\begin{smallmatrix} D_1\\D_2  \end{smallmatrix}\right) \beta_K\right].
\end{aligned}
\end{equation}

On the other hand,
    let $\{e_1,e_2,e_3,e_4\}$ be the canonical base of $\R^4$.
    Because $w$ is a valid configuration,
    Remark~\ref{rem:matrix-encoding-of-valid-configuration} applies
    and the following sum is telescoping:
    \begin{align*}
        \sum_{n\in S_K}
        w_{n} 
        \left(\begin{smallmatrix}
        1\\1\\-1\\-1
        \end{smallmatrix}\right)
        &=
        \sum_{-K\leq i,j \leq K}
          w_{i,j} e_1 
        - w_{i,j} e_3
        + w_{i,j} e_2
        - w_{i,j} e_4\\
        &=
        \sum_{-K\leq i,j \leq K}
          w_{i,j} e_1 
        - w_{i-1,j} e_1
        + w_{i,j} e_2
        - w_{i,j-1} e_2\\
        &=
        \sum_{j=-K}^{K}
        \sum_{i=-K}^{K}
        \left(
          w_{i,j} e_1 
        - w_{i-1,j} e_1
        \right)
        +
        \sum_{i=-K}^{K}
        \sum_{j=-K}^{K}
        \left(
          w_{i,j} e_2
        - w_{i,j-1} e_2
        \right)\\
        &=
        \sum_{j=-K}^{K}
        \left(
          w_{K,j} e_1 
        - w_{-K-1,j} e_1
        \right)
        +
        \sum_{i=-K}^{K}
        \left(
          w_{i,K} e_2
        - w_{i,-K-1} e_2
        \right).
    \end{align*}
    Let $\rho= \max_{\tau\in\Tcal} \Vert M_\tau \Vert$ be the maximum norm of the
    matrix encodings of the Wang tiles for the sup-norm 
    $\Vert\cdot\Vert=\Vert\cdot\Vert_\infty$
    and let $\Delta=4\cdot\rho\cdot\Vert U\Vert$.
    The following is bounded by a constant
    \begin{equation}\label{eq:sum_is_bounded_by_a_constant}
    \begin{aligned}
        &
        \frac{1}{\# S_K}
        \left\Vert \sum_{n\in S_K} U\cdot w_{n} 
        \left(\begin{smallmatrix}
        1\\1\\-1\\-1
        \end{smallmatrix}\right)
        \right\Vert\\
        &\leq
        \frac{\Vert U\Vert}{(2K+1)^2}
        \left\Vert
        \sum_{j=-K}^{K}
        \left(
          w_{K,j} e_1 
        - w_{-K-1,j} e_1
        \right)
        +
        \sum_{i=-K}^{K}
        \left(
          w_{i,K} e_2
        - w_{i,-K-1} e_2
        \right)
        \right\Vert\\
        &\leq
        \frac{\Vert U\Vert}{(2K+1)^2}
        \left(
        \sum_{j=-K}^{K}
        \left(
        \Vert w_{K,j} e_1\Vert + \Vert w_{-K-1,j} e_1\Vert
        \right)
        +
        \sum_{i=-K}^{K}
        \left(
        \Vert w_{i,K} e_2\Vert + \Vert w_{i,-K-1} e_2\Vert
        \right)\right)\\
        &\leq
        \frac{\Vert U\Vert}{2K+1}
        \left(
        \max_{\tau\in\Tcal}
        \Vert M_\tau e_1\Vert 
        +
        \max_{\tau\in\Tcal}
        \Vert M_\tau e_1\Vert 
        +
        \max_{\tau\in\Tcal}
        \Vert M_\tau e_2\Vert 
        +
        \max_{\tau\in\Tcal}
        \Vert M_\tau e_2\Vert \right)\\
        &=
        \frac{\Vert U\Vert}{2K+1}
        \left(
        2\max_{\tau\in\Tcal}
        \Vert M_\tau e_1\Vert 
        +
        2\max_{\tau\in\Tcal}
        \Vert M_\tau e_2\Vert\right)
        \leq
        \frac{4\rho\Vert U\Vert}{2K+1}
        =
        \frac{\Delta}{2K+1}.
    \end{aligned}
\end{equation}
Combining \eqref{eq:first-equation-in-proof} and \eqref{eq:sum_is_bounded_by_a_constant}, 
and assuming the norm is the sup-norm,
we obtain the pair of estimates
\begin{equation}\label{eq:two-nice-inequalities-bounded-by-Delta-over-2K}
    \begin{cases}
        \left|(1-\alpha_K)\left(A_1(1-\beta_K) +B_1 \beta_K\right)
        +\alpha_K\left(C_1 (1-\beta_K) +D_1   \beta_K\right)\right|
        = \Ocal(K^{-1}),\\
        \left|(1-\alpha_K)\left(A_2(1-\beta_K) +B_2 \beta_K\right)
        +\alpha_K\left(C_2 (1-\beta_K) +D_2   \beta_K\right)\right|
        = \Ocal(K^{-1}).
    \end{cases}
\end{equation}

Define $P: [0,1]^2 \to \R_{\geq 0}$ by
\begin{align*}
P(\alpha, \beta) =& \left|(1-\alpha)\left(A_1(1-\beta) +B_1 \beta\right)
        +\alpha\left(C_1 (1-\beta) +D_1   \beta\right)\right| \\
        & +
        \left|(1-\alpha)\left(A_2(1-\beta) +B_2 \beta\right)
        +\alpha\left(C_2 (1-\beta) +D_2   \beta\right)\right|
\end{align*}
Then $P$ is continuous, and its set of zeros $\mathcal{Z}_P = P^{-1}(0)$
is the set of solutions to Equation~\eqref{alpha_beta_relations}.
By hypothesis, Equation~\eqref{alpha_beta_relations} has finitely many solutions $(\alpha, \beta)$.
If we set $x_K = (\alpha_K, \beta_K) \in [0,1]^2$ for every integer $K\geq0$, 
then \eqref{eq:two-nice-inequalities-bounded-by-Delta-over-2K} implies $\lim_{K\to\infty}P(x_K) = 0$.

Since $\alpha_K = \frac{n_K}{2K+1}$, where $n_K$ is the number of vertical stripes in $S_K$, and since $n_{K+1} = n_K + d_K$ for some $d_K \in \{0,1,2\}$,
$$\alpha_{K+1} - \alpha_K = \frac{n_K + d_K}{2K+3} - \frac{n_K}{2K+1} = \frac{-2n_K + d_K(2K+1)}{(2K+1)(2K+3)} = \Ocal(1/K).$$
The same estimate holds for $\beta_{K+1} - \beta_K$, so $\lim_{K\to\infty}\|x_{K+1} - x_K\| = 0$.

Given $\epsilon > 0$, let $X = [0,1]^2 \smallsetminus \cup_{z\in \mathcal{Z}_P}B_\epsilon(z)$, and $\delta = \min_{y \in X}P(y),$
which is positive, by continuity and compactness. Then 
\[P(x_K)< \delta \implies x_K\in [0,1]^2 \smallsetminus X = \cup_{x\in \mathcal{Z}_P}B_\epsilon(x).\] For $K$ sufficiently large and $\epsilon$ sufficiently small, we cannot have $x_K \in B_\epsilon(z_1)$ and $x_{K+1} \in B_\epsilon(z_2)$ for distinct $z_1, z_2 \in \mathcal{Z}_P$, since $\mathcal{Z}_P$ is finite. Thus for some $z_0 \in \mathcal{Z}_P$, we have that for any $\epsilon > 0 $, $x_K\in B_\epsilon(z_0)$ for $K$ sufficiently large, so $\lim_{K\to\infty} x_K = z_0$.
\end{proof}

Following Proposition~\ref{prop:sufficient-condition-quadratic-equation},
for each example in the Section~\ref{sec:applications}, 
the stripe densities $(\alpha,\beta)$
must satisfy $\alpha=\beta$ and the equation
$0=(A_i-B_i-C_i+D_i)\alpha^2+(B_i+C_i-2A_i)\alpha + A_i$
for $i\in\{1,2\}$.
These equations and their solutions are listed in
Table~\ref{table:densities-for-all-examples}.

\begin{table}
    \[
    \begin{array}{l|c|c|c|c}
        \text{Example}
        &\ABCD{A_1}{A_2}{B_1}{B_2}{C_1}{C_2}{D_1}{D_2}
        &\Delta
        &\begin{array}{c}
            \text{Equations}\\
            \text{for }i\in\{1,2\}
            \end{array}
        &\begin{array}{c}
            \text{Stripe densities}\\
            (\alpha,\beta)
            \end{array}
            \\
		\hline
		\hline
        &&&\\[-2mm]
        \text{Penrose}
        &\ABCD{0}{ 1}{-2}{ 1}{2}{ 0}{0}{ -1} & 5 &
        \begin{array}{l}
        0= 0\alpha^2+0\alpha + 0\\
        0= -\alpha^2-\alpha + 1
        \end{array}
        &\alpha=\beta=\varphi^{-1}
        \\[3mm]
        \hline
        &&&\\[-2mm]
        \text{Ammann}
        &\ABCD{0}{1}{1}{1}{-1}{0}{0}{-1} & 5 &
        \begin{array}{l}
        0= 0\alpha^2+0\alpha + 0\\
        0= -\alpha^2-\alpha + 1
        \end{array}
        &\alpha=\beta=\varphi^{-1}
        \\[3mm]
        \hline
        &&&\\[-2mm]
        \text{19 self-similar $\Ucal$}
        &\ABCD{-1}{1}{0}{-2}{1}{1}{1}{-1} & 5 &
        \begin{array}{l}
        0= -\alpha^2+3\alpha - 1\\
        0=  \alpha^2-3\alpha + 1
        \end{array}
        &\alpha=\beta=\varphi^{-2}
        \\[3mm]
        \hline
        &&&\\[-2mm]
        \text{16 self-similar $\Zcal$}
        &\ABCD{-1}{-1}{-1}{2}{2}{-1}{1}{1} & 5 &
        \begin{array}{l}
        0= -\alpha^2+3\alpha - 1\\
        0= -\alpha^2+3\alpha - 1
        \end{array}
        &\alpha=\beta=\varphi^{-2}
        \\[3mm]
        \hline
        &&&\\[-2mm]
        \text{Jeandel-Rao $\Tcal_7$}
        &\ABCD{1}{1}{0}{1}{-1}{-2}{-1}{-1} & 5 &
        \begin{array}{l}
        0=  \alpha^2-3\alpha + 1\\
        0=  \alpha^2-3\alpha + 1
        \end{array}
        &\alpha=\beta=\varphi^{-2}
        \\[3mm]
        \hline
        &&&\\[-2mm]
        \text{Metallic mean}
        &\ABCD{1}{1}{1}{1-n}{1-n}{1}{-n}{-n} & n^2+4 &
        \begin{array}{l}
        0= -\alpha^2-n\alpha + 1\\
        0= -\alpha^2-n\alpha + 1
        \end{array}
        &\alpha=\beta=
        \tau_n^{-1}
        \\[3mm]
        \hline
    \end{array}
    \]
    \caption{
        Densities $(\alpha,\beta)$
        of vertical and horizontal stripes 
    for every example in Section~\ref{sec:applications}
    where $\varphi$ is the golden mean
    and $\tau_n$ is the $n$-th metallic mean.}
\label{table:densities-for-all-examples}
\end{table}

\section{New constructions of aperiodic sets of Wang tiles}
\label{sec:new-constructions}

In this section we design finite sets of Wang tiles with vertical and horizontal stripes that satisfy the linear constraints \eqref{alpha_beta_relations}. Such tiles can be constructed for any integer parameter values $A_i, B_i, C_i, D_i$ as long as \eqref{alpha_beta_relations} admits a solution $\alpha,\beta\in [0,1]$. The constructions provide new families of aperiodic tile sets.

\begin{theorem}
\label{thm:balancetheorem}
Let $A,B,C,D\in\Z^2$.
For every solution $\alpha,\beta\in [0,1]$ to Equation~\eqref{alpha_beta_relations},
there exists a finite Wang tile set
$\Tcal=\Tcal_\square\cup\Tcal_\boxbar\cup\Tcal_\boxminus\cup \Tcal_\boxplus$
with horizontal and vertical stripes such that
\begin{enumerate}
    \item[(i)] $\Tcal$ admits a tiling $w:\Z^2\to\Tcal$ where vertical
        stripes (\emph{i.e.}, tiles in
        $\Tcal_\boxbar\cup \Tcal_\boxplus$)
        have density $\alpha$, and the horizontal stripes (tiles in $\Tcal_\boxminus\cup \Tcal_\boxplus$) have density $\beta$, and
    \item[(ii)] $\Tcal$ determines the quadrilateral $ABDC$.
\end{enumerate}
\end{theorem}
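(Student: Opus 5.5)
The plan is to build an explicit valid configuration first and then read the tile set off it. The configuration has Sturmian stripe pattern, and its labels are bounded integer vectors forming a "flow" whose divergence is the quadrilateral data. Finiteness of the tile set will then follow from boundedness of the labels.

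\emph{Step 1: the stripe skeleton.} Put $x_i=\lfloor (i+1)\alpha\rfloor-\lfloor i\alpha\rfloor\in\{0,1\}$ and $y_j=\lfloor (j+1)\beta\rfloor-\lfloor j\beta\rfloor\in\{0,1\}$. Column $i$ carries a vertical stripe iff $x_i=1$, and row $j$ carries a horizontal stripe iff $y_j=1$. Position $(i,j)$ gets the target vector $v_{i,j}\in\{A,B,C,D\}$ according to the type $(x_i,y_j)$, with the convention of \eqref{eq:fourbulletpoints}: $A$ for $(0,0)$, $B$ for $(0,1)$, $C$ for $(1,0)$, $D$ for $(1,1)$. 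These sequences have densities exactly $\alpha$ and $\beta$ and bounded discrepancy, since $\sum_{k<m}(x_k-\alpha)\in(-1,0]$. Set $R_y=v(1,y)-v(0,y)$ and $Q=(1-\alpha)(B-A)+\alpha(D-C)$. Using that $(\alpha,\beta)$ solves \eqref{alpha_beta_relations}, i.e.\ $(1-\alpha)A+\alpha C+\beta Q=\mathbf 0$, one checks
\[
v_{i,j}=(x_i-\alpha)R_{y_j}+(y_j-\beta)Q .
\]
Hence for every finite rectangle $W$ the sum $\sum_{(i,j)\in W}v_{i,j}$ is $\Ocal(\text{perimeter of }W)$. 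I want this isoperimetric bound for \emph{every} finite region $W\subset\Z^2$, not just rectangles. For a general region I will slice it into row segments and column segments. Along a row the first term telescopes with bounded partial sums. Along a column the second term does the same. The cross terms are handled by slicing the other way. The result is $|\sum_W v|\le c\,|\partial W|$ with $c$ depending only on $A,B,C,D$.

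\emph{Step 2: a bounded integer flow.} I look for integer vectors on edges: $e_{i,j}\in\Z^2$ on the vertical edge east of $(i,j)$ and $n_{i,j}\in\Z^2$ on the horizontal edge north of $(i,j)$. They must satisfy
\[
e_{i,j}-e_{i-1,j}+n_{i,j}-n_{i,j-1}=v_{i,j}\quad\text{for all }(i,j),
\]
with all $|e|,|n|\le M$ for some constant $M$. Coordinatewise this is an integer flow problem on the grid graph with integer supplies $v_{i,j}$ and capacity $M$ on each edge. By the max-flow/min-cut (Gale--Hoffman) feasibility criterion, on a finite box with free boundary it suffices that $|\sum_W v|\le M|\partial W|$ for every subregion $W$, which is Step~1. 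Integrality of the supplies gives an integral flow. A compactness (König) argument then passes from boxes to all of $\Z^2$. This is the step I expect to be the main obstacle. The difficulty is proving the isoperimetric inequality uniformly for arbitrary, possibly non-convex, regions. If the general-region estimate resists, I would first try to prove a rectangle version of the flow lemma and build the flow row-by-row. Concretely, I would make $n$ depend only on the column index through the partial sums of $(y_j-\beta)Q$ after rescaling to integers. This fallback still has to address that those partial sums are real, not integer.

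\emph{Step 3: reading off the tiles.} Define the vertical label of the east edge of $(i,j)$ as $(y_j,e_{i,j})\in I=\{0,1\}\times\{-M,\dots,M\}^2$. Define the horizontal label of the north edge of $(i,j)$ as $(x_i,n_{i,j})\in J=\{0,1\}\times\{-M,\dots,M\}^2$. Take $I_1,J_1$ to be the labels with first coordinate $1$, and let $\vv,\hh$ be projection to the vector part. Let $\Tcal$ be the finite set of tiles occurring in this configuration $w$. It has horizontal and vertical stripes because the stripe bit is copied across each row and each column. By construction $w$ is valid and has densities $\alpha,\beta$, which is (i). The flow equation says precisely that every occurring tile satisfies \eqref{eq:fourbulletpoints}, so $\Tcal$ determines $ABDC$, which is (ii).
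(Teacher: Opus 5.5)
Your proof is correct, but it takes a genuinely different route from the paper.

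\textbf{What the paper does.} It writes the configuration down explicitly, in the style of Kari's multiplier tiles.
\begin{itemize}
\item For the first coordinate, the horizontal edges of row $j$ carry the balanced digits $\bal{y_j}{i}$ of real numbers $y_j$. These change by $\sigma_1=(1-\alpha)B_1+\alpha D_1$ or $\tau_1=(1-\alpha)A_1+\alpha C_1$ from one row to the next.
\item The vertical edges carry explicit integer carries $c_{i,j}$.
\item The second coordinate is handled by the transposed construction.
\item Boundedness comes from closed formulas such as $y_j-y_0=(\tau_1-\sigma_1)\fr{j\beta}$.
\end{itemize}

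\textbf{What you do.} You observe that any such labelling is just a bounded integer edge field whose discrete divergence is $v_{i,j}$, and you prove such a field exists abstractly. The identity $v_{i,j}=(x_i-\alpha)R_{y_j}+(y_j-\beta)Q$ is exactly where Equation~\eqref{alpha_beta_relations} enters. It gives $|\sum_W v|\le c\,|\partial W|$ for every finite region $W$. Gale--Hoffman feasibility, with integrality, then gives integer flows on boxes, and compactness passes to all of $\Z^2$.

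\textbf{What each route buys.} Yours is shorter and more conceptual. It is essentially the converse of the telescoping argument in Lemma~\ref{lem:doubly-periodic-densities-satisfy-equation}. It yields an explicit uniform label bound, and it applies verbatim to any integer field whose region sums are bounded by the perimeter. The paper's route avoids network-flow duality and K\"onig's lemma and places an explicit tile $t_{i,j}$ in every cell. That gives the precise colour counts of the subsequent lemma and the transparent structure used later (the effective construction, and the questions on minimality and self-similarity).

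\textbf{Two small clarifications.} First, the step you flagged as the main obstacle is already settled by your own Step~1, and there are no cross terms to handle.
\begin{itemize}
\item For the first summand, slice $W$ into maximal row runs. Since $\sum_{i=a}^{b-1}(x_i-\alpha)=\fr{a\alpha}-\fr{b\alpha}\in(-1,1)$, each run contributes less than $\max(\|R_0\|_\infty,\|R_1\|_\infty)$.
\item For the second summand, slice $W$ into maximal column runs; each contributes less than $\|Q\|_\infty$.
\item Each run owns two boundary edges, so the number of runs is at most $|\partial W|/2$.
\end{itemize}
So the fallback is unnecessary. Second, make the ``free boundary'' precise by adding one exterior vertex joined to the box by all its outgoing edges. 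Gale's condition then reads $|\sum_W v|\le M|\partial W|$ for all $W$ in the box, where $\partial W$ counts every edge of $\Z^2$ leaving $W$, which is exactly what Step~1 provides.
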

\begin{proof}
The colors of the tiles are triplets $(s,t,u)$ of integers. The first coordinate $s$ is $0$ or $1$ depending on whether there is a stripe in the tile across the edge, with label $1$ indicating the presence of a stripe. The encoding functions $\vv$ and $\hh$ extract the last two coordinates of the vectors, i.e., they map $(s,t,u)\mapsto (t,u)$.

In the default tiling $w$, the second coordinate $t$ of a horizontal edge is a digit in a representation of a real number as a bi-infinite sequence of integers. As in~\cite{MR1417578}, we represent a real number $r$ as a bi-infinite sturmian or periodic sequence of integer symbols $\lfloor r\rfloor$ and $\lceil r\rceil$ with well-defined average $r$.  More precisely, for all $i\in\Z$, the $i$'th element of the sequence is
\begin{equation}
\label{eq:balancedrepresentation}
\bal{r}{i} = \lfloor (i+1)r\rfloor - \lfloor ir\rfloor.
\end{equation}
We call this the balanced representation of the number $r$, and $\bal{r}{i}$  is the $i$'th digit of the representation.

In our default tiling $w$, the second coordinates $t$ of the north and the south edges of a horizontal row of tiles form balanced representations of real numbers. If $r$ is the number represented by south edges then the north edges represent the number $r+\sigma_1$ if there is a horizontal stripe in the tiles of the row, and $r+\tau_1$ if there is no horizontal stripe, where
\begin{equation}
\label{eq:sigmatau}
\begin{array}{rcl}
\sigma_1 &=& (1-\alpha)B_1+\alpha D_1,\\
\tau_1 &=& (1-\alpha)A_1+\alpha C_1.
\end{array}
\end{equation}
Numbers $\sigma_1$ and $\tau_1$ are chosen this way because they satisfy
\begin{equation}
\label{eq:sigmatauaverage}
\sigma_1\cdot \beta + \tau_1\cdot (1-\beta)=0.
\end{equation}
This relation follows directly from \eqref{alpha_beta_relations} and \eqref{eq:sigmatau}.

The second coordinates $t$ of the vertical edges are ``carry'' symbols to facilitate the representations. The construction is similar to the one in~\cite{MR1417578} but, instead of multiplying real numbers by some rational constants, the tiles now add an irrational constant $\sigma_1$  or $\tau_1$. Another difference is that the present tiles also do a similar addition symmetrically in the vertical direction: the third coordinates $u$ of the east and the west edges of a vertical column form balanced representations of numbers, with corresponding ``carries'' given by the third components $u$ of the north and the south edges. In this direction, the constants added when moving from a column to the next are
\begin{equation}
\label{eq:sigmatau2}
\begin{array}{rcl}
\sigma_2 &=& (1-\beta)C_2+\beta D_2,\\
\tau_2 &=& (1-\beta)A_2+\beta B_2.
\end{array}
\end{equation}
By \eqref{alpha_beta_relations} and \eqref{eq:sigmatau2} these numbers satisfy the relation
\begin{equation}
\label{eq:sigmatauaverage2}
\sigma_2\cdot \alpha + \tau_2\cdot (1-\alpha)=0.
\end{equation}

Let us pick any real numbers $y_0$ and $z_0$ as the numbers to be represented on the south edges of the horizontal row 0 and the west edges of the vertical column 0, respectively. Define, for every $j\in\Z$,
\begin{equation}
\label{eq:numbers}
y_j = y_0+\sigma_1\cdot \lfloor j\beta\rfloor + \tau_1\cdot (j-\lfloor j\beta\rfloor).
\end{equation}
Note that the right-hand-side evaluates to $y_0$ when $j=0$, so that the notation is consistent. Note also that
\begin{equation}
\label{eq:increment}
y_{j+1}-y_{j} = \sigma_1\cdot \bal{\beta}{j} + \tau_1\cdot (1-\bal{\beta}{j})\\
=
\left\{
\begin{array}{ll}
\sigma_1, & \mbox{ if $\bal{\beta}{j}=1$,}\\
\tau_1, & \mbox{ if $\bal{\beta}{j}=0$.}
\end{array}
\right.
\end{equation}
In our default tiling $w$ the south edges of the horizontal row $j$ represent the number $y_j$,
and there is a horizontal stripe on row $j$ if and only if $\bal{\beta}{j}=1$.

Analogously for the columns, we define, for every $i\in\Z$,
\begin{equation}
\label{eq:numbers2}
z_i = z_0+\sigma_2\cdot \lfloor i\alpha\rfloor + \tau_2\cdot (i-\lfloor i\alpha\rfloor),
\end{equation}
so that
\begin{equation}
\label{eq:increment2}
z_{i+1}-z_{i}=
\left\{
\begin{array}{ll}
\sigma_2, & \mbox{ if $\bal{\alpha}{i}=1$,}\\
\tau_2, & \mbox{ if $\bal{\alpha}{i}=0$.}
\end{array}
\right.
\end{equation}

Let us next define the required ``carry'' symbols. For any $i,j\in\Z$ define
\begin{equation}
\label{eq:carry}
c_{i,j} =
\left\{
\begin{array}{ll}
\lfloor i y_j\rfloor-\lfloor i y_{j+1}\rfloor+
\lfloor i\alpha\rfloor D_1
+(i-\lfloor i\alpha\rfloor) B_1, & \mbox{ if $\bal{\beta}{j}=1$,}\\
\lfloor i y_j\rfloor-\lfloor i y_{j+1}\rfloor+
\lfloor i\alpha\rfloor C_1
+(i-\lfloor i\alpha\rfloor) A_1,
 & \mbox{ if $\bal{\beta}{j}=0$.}\\
\end{array}
\right.
\end{equation}
A direct calculation shows that, for all $i,j\in\Z$,
\begin{equation}
\label{eq:linearitycondition}
c_{i+1,j}+\bal{y_{j+1}}{i}-c_{i,j}-\bal{y_{j}}{i} =
\left\{
\begin{array}{ll}
A_1, & \mbox{ if $\bal{\alpha}{i}=0$ and $\bal{\beta}{j}=0$,}\\
B_1, & \mbox{ if $\bal{\alpha}{i}=0$ and $\bal{\beta}{j}=1$,}\\
C_1, & \mbox{ if $\bal{\alpha}{i}=1$ and $\bal{\beta}{j}=0$,}\\
D_1, & \mbox{ if $\bal{\alpha}{i}=1$ and $\bal{\beta}{j}=1$,}\\
\end{array}
\right.
\end{equation}
Analogously, the carries $d_{i,j}$ in the vertical direction are defined, for all $i,j\in\Z$, by
\begin{equation}
\label{eq:carry2}
d_{i,j} =
\left\{
\begin{array}{ll}
\lfloor j z_i\rfloor-\lfloor j z_{i+1}\rfloor+
\lfloor j\beta\rfloor D_2
+(j-\lfloor j\beta\rfloor) C_2, & \mbox{ if $\bal{\alpha}{i}=1$,}\\
\lfloor j z_i\rfloor-\lfloor j z_{i+1}\rfloor+
\lfloor j\beta\rfloor B_2
+(j-\lfloor j\beta\rfloor) A_2,
 & \mbox{ if $\bal{\alpha}{i}=0$.}\\
\end{array}
\right.
\end{equation}
As above, a direct calculation gives
\begin{equation}
\label{eq:linearitycondition2}
d_{i,j+1}+\bal{z_{i+1}}{j}-d_{i,j}-\bal{z_{i}}{j} =
\left\{
\begin{array}{ll}
A_2, & \mbox{ if $\bal{\alpha}{i}=0$ and $\bal{\beta}{j}=0$,}\\
B_2, & \mbox{ if $\bal{\alpha}{i}=0$ and $\bal{\beta}{j}=1$,}\\
C_2, & \mbox{ if $\bal{\alpha}{i}=1$ and $\bal{\beta}{j}=0$,}\\
D_2, & \mbox{ if $\bal{\alpha}{i}=1$ and $\bal{\beta}{j}=1$,}\\
\end{array}
\right.
\end{equation}

Now we are ready to define the tile set. We give the tiles in the matrix formalism, discussed in Section~\ref{sec:wang-tiles-encoded-as-nx4-matrices}, where the colors of the east, north, west and south edges of a tile are written, in this order, as the columns of a $3\times 4$ matrix. The tile set contains, for every $i,j\in\Z$, the Wang tile $t_{i,j}$ with the matrix representation
\begin{equation}
\label{eq:balancedtiles}
M_{t_{i,j}}
        = \left(\begin{array}{c|c|c|c}
            \bal{\beta}{j}     & \bal{\alpha}{i}       & \bal{\beta}{j} & \bal{\alpha}{i}\\
            \hline
            c_{i+1,j}      & \bal{y_{j+1}}{i}  & c_{i,j} & \bal{y_j}{i}\\
            \bal{z_{i+1}}{j} & d_{i,j+1} & \bal{z_i}{j} & d_{i,j}
        \end{array}\right).
\end{equation}
The tile $t_{i,j}$ is to be placed in cell $(i,j)$ in our default tiling $w$. This default tiling is valid because $\north(t_{i,j})=\south(t_{i,j+1})$ and $\east(t_{i,j})=\west(t_{i+1,j})$ for all $i,j\in\Z$.

The first row encodes the presence of stripes.
It follows that in the default tiling $w$ the density of vertical stripes is equal to $\alpha$ because the stripes are at columns $i$ with $\bal{\alpha}{i}=1$.
Similarly, there is a horizontal stripe on row $j$ if and only if $\bal{\beta}{j}=1$,
so that the horizontal stripes have density $\beta$ in $w$.

Because
$$
\IdTwoThree M_{t_{i,j}}  \OneOneMinusoneMinusOne
=
\left(\begin{array}{c}
        c_{i+1,j}+\bal{y_{j+1}}{i}-c_{i,j}-\bal{y_{j}}{i}\\
        d_{i,j+1}+\bal{z_{i+1}}{j}-d_{i,j}-\bal{z_{i}}{j}
        \end{array}\right),
$$
it follows from \eqref{eq:linearitycondition} and \eqref{eq:linearitycondition2} that
\eqref{eq:fourbulletpoints} holds. Thus, $\Tcal$ determines the quadrilateral $ABDC$.

Finally, we need to prove that we have only constructed a finite number of tiles. As all elements of matrices $M_{t_{i,j}}$ are integers, it is enough to prove that the absolute values of the elements are bounded by some constant.
\begin{itemize}
    \item The first row elements $\bal{\alpha}{i}$ and $\bal{\beta}{j}$ are in $\{0,1\}$.
    \item Elements $\bal{y_j}{i}$, for $i,j\in\Z$, are balanced digits of numbers $y_j$, and so it is enough to bound the numbers $y_j$. Erasing the floor functions in (\ref{eq:numbers}) gives an approximation
$$
y'_j=y_0+j(\sigma_1\cdot\beta+\tau_1\cdot (1-\beta))
$$
of $y_j$. By \eqref{eq:sigmatauaverage} we have that $y'_j=y_0$ is a constant, independent of $j$. As the difference of $y_j$ and $y'_j$ comes only from erasing the floor functions,
we have that
$|y_j-y_0| \leq |\sigma_1|+|\tau_1|$, proving that the numbers $|y_j|$ have a bound independent of $j$. More precisely, $y_j-y_0=(\tau_1-\sigma_1)\fr{j\beta}$ where, for any $r\in\R$, we denote $\fr{r}\in[0,1)$ for the fractional part $r-\lfloor r\rfloor$ of $r$.
An analogous reasoning shows that $\bal{z_i}{j}$ are bounded for $i,j\in\Z$, and that in fact
$z_i-z_0=(\tau_2-\sigma_2)\fr{i\alpha}$.

\item Finally, to bound $c_{i,j}$, for $i,j\in\Z$,  we use a similar calculation. Erasing the floor functions from the definition (\ref{eq:carry}) of numbers $c_{i,j}$ gives their approximations $c'_{i,j}$. If
$\bal{\beta}{j}=1$ we have
$$
c'_{i,j}
=i(y_j-y_{j+1}+\alpha D_1 +(1-\alpha) B_1)
=i(y_j-y_{j+1}+\sigma_1)=0,
$$
where we used (\ref{eq:sigmatau}), and also (\ref{eq:increment}) to observe that $y_j-y_{j+1}+\sigma_1=0$ when $\bal{\beta}{j}=1$. Similarly, if $\bal{\beta}{j}=0$ we have
$$
c'_{i,j}
=i(y_j-y_{j+1}+\alpha C_1 +(1-\alpha) A_1)
=i(y_j-y_{j+1}+\tau_1)=0.
$$
The change in $c_{i,j}$ caused by removing the floor functions in (\ref{eq:carry}) is at most $1+1+|D_1|+|B_1|$ when
$\bal{\beta}{j}=1$, and at most $1+1+|C_1|+|A_1|$ when
$\bal{\beta}{j}=0$. In any case, integers $c_{i,j}$ are bounded, independent of $i,j$.
More precisely, $c_{i,j}=\fr{iy_{j+1}}-\fr{iy_j}+\fr{i\alpha}(B_1-D_1)$ when $\bal{\beta}{j}=1$
and  $c_{i,j}=\fr{iy_{j+1}}-\fr{iy_j}+\fr{i\alpha}(A_1-C_1)$ when $\bal{\beta}{j}=0$.
An analogous reasoning shows that also the integers $d_{i,j}$ are bounded, with
$d_{i,j}=\fr{jz_{i+1}}-\fr{jz_i}-\fr{j\beta}(C_2-D_2)$ when $\bal{\alpha}{i}=1$ and
$d_{i,j}=\fr{jz_{i+1}}-\fr{jz_i}-\fr{j\beta}(A_2-B_2)$ when $\bal{\alpha}{i}=0$.
\end{itemize}
\end{proof}

\begin{remark}
\label{rem:effective}
Let $\Tcal$ be the tile set constructed in the proof of Theorem~\ref{thm:balancetheorem}. One may extend $\Tcal$
by adding more tiles represented by $3\times 4$ integer matrices that satisfy \eqref{eq:fourbulletpoints}, i.e., that determine the same quadrilateral ABDC as $\Tcal$ under the same  functions $\vv$ and $\hh$. The extended tile set still satisfies Theorem~\ref{thm:balancetheorem}. 
It follows that one can effectively find a tile set in Theorem~\ref{thm:balancetheorem}: calculate bounds on the elements of the matrices \eqref{eq:balancedtiles} as discussed in the end of the proof, and take in the tile set all matrices with so bounded elements that satisfy \eqref{eq:fourbulletpoints}. All tiles given by \eqref{eq:balancedtiles} for any $i,j\in\Z$  are guaranteed to be among the constructed tiles.
The tile set may contain tiles that are not given by \eqref{eq:balancedtiles} but the additional tiles do not break Theorem~\ref{thm:balancetheorem}. 
\end{remark}

In the following lemma, we provide  explicit upper bounds on the cardinalities of the
horizontal and vertical color sets with and without stripes in the constructed
tile set $\Tcal$. 
For simplicity, we do the calculations with the choice $y_0=z_0=0$. 

\begin{lemma}
Let $\Tcal$ be the set of Wang tiles constructed in Theorem~\ref{thm:balancetheorem}
appearing in the configuration defined with parameters $y_0=z_0=0$. 
The cardinalities of the horizontal and vertical color sets 
with and without stripes in $\Tcal$ satisfy
    \begin{equation}\label{eq:cardinalities-color-sets}
        \begin{array}{lcl}
        |I_0| & \leq &  (|A_1-C_1|+1)\lceil|R_2-S_2|+1\rceil,\\
        |I_1| & \leq &  (|B_1-D_1|+1)\lceil|R_2-S_2|+1\rceil,\\
        |J_0| & \leq &  (|A_2-B_2|+1) \lceil|P_1-Q_1|+1\rceil,\\
        |J_1| & \leq &  (|C_2-D_2|+1) \lceil|P_1-Q_1|+1\rceil,
        \end{array}
    \end{equation}
where $P=(1-\alpha)A+\alpha C$, $Q=(1-\alpha)B+\alpha D$,  $R=(1-\beta)A+\beta B$ and
$S=(1-\beta)C+\beta D$ are the points of the quadrilateral ABDC shown in Figure~\ref{fig:the-parabola-for-Ammann-case}.
\end{lemma}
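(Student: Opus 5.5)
The plan is to count the vertical colors through their triplet structure and bound each coordinate separately. In the construction of Theorem~\ref{thm:balancetheorem}, a vertical (east/west) color on the edge between cells $(i-1,j)$ and $(i,j)$ is the triple $(\bal{\beta}{j},\, c_{i,j},\, \bal{z_i}{j})$. Such a color lies in $I_0$ exactly when $\bal{\beta}{j}=0$, and in $I_1$ exactly when $\bal{\beta}{j}=1$. Hence $|I_0|$ is at most the number of values of $c_{i,j}$ over rows with $\bal{\beta}{j}=0$, times the number of values of the digits $\bal{z_i}{j}$. The bound on $|I_1|$ is the same with $\bal{\beta}{j}=1$. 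The horizontal sets $J_0,J_1$ are handled by the symmetric argument, with $d_{i,j}$, $y_j$, $\alpha$ and $P,Q$ in place of $c_{i,j}$, $z_i$, $\beta$ and $R,S$.

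\emph{Digit factor.} With $z_0=0$, the proof of Theorem~\ref{thm:balancetheorem} gives $z_i=(\tau_2-\sigma_2)\fr{i\alpha}$. By \eqref{eq:sigmatau2} we have $\tau_2=R_2$ and $\sigma_2=S_2$, so every $z_i$ lies in the closed interval between $0$ and $R_2-S_2$. A balanced digit $\bal{z}{j}=\lfloor (j+1)z\rfloor-\lfloor jz\rfloor$ is $\lfloor z\rfloor$ or $\lceil z\rceil$. It is therefore an integer in the range from $\lfloor\min(0,R_2-S_2)\rfloor$ to $\lceil\max(0,R_2-S_2)\rceil$, which contains at most $\lceil|R_2-S_2|\rceil+1=\lceil|R_2-S_2|+1\rceil$ integers. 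The same argument with $y_j=(\tau_1-\sigma_1)\fr{j\beta}$ gives the factor $\lceil|P_1-Q_1|+1\rceil$ for the horizontal colors, since $\tau_1=P_1$ and $\sigma_1=Q_1$.

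\emph{Carry factor; this is the main step.} Take a row with $\bal{\beta}{j}=0$ and set $k=A_1-C_1$, $x=iy_j$ and $\theta=\fr{i\alpha}$. By \eqref{eq:increment} we have $y_{j+1}=y_j+\tau_1$. Also $i\tau_1=iA_1-k\,i\alpha=iA_1-k\lfloor i\alpha\rfloor-k\theta$, which is an integer minus $k\theta$. Therefore $\fr{iy_{j+1}}=\fr{x-k\theta}$. Substituting into the closed form $c_{i,j}=\fr{iy_{j+1}}-\fr{iy_j}+k\theta$ from the end of the proof of Theorem~\ref{thm:balancetheorem}, I expect the fractional parts to collapse to
\[
c_{i,j}=\lfloor x\rfloor-\lfloor x-k\theta\rfloor .
\]
Since $0\le\theta<1$, the number $k\theta$ lies in $[0,k)$ when $k\ge0$, or in $(k,0]$ when $k<0$. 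Hence $c_{i,j}$ takes values in $\{0,\dots,k\}$, or $\{k,\dots,0\}$ respectively, which is at most $|A_1-C_1|+1$ values. Rows with $\bal{\beta}{j}=1$ give at most $|B_1-D_1|+1$ values by the identical computation with $\sigma_1$ and $k=B_1-D_1$. The vertical carries $d_{i,j}$ give the counts $|A_2-B_2|+1$ and $|C_2-D_2|+1$ in the same way, using \eqref{eq:increment2} and the closed forms of $d_{i,j}$.

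The delicate point is the sign bookkeeping in this floor-difference identity: I must check that $c_{i,j}$ is exactly a difference of two floors shifted by $k\theta$ with $\theta\in[0,1)$, rather than merely an integer near $k\theta$. A cruder estimate would allow $|k|+2$ values instead of $|k|+1$. Once the identity is established, multiplying the carry count by the digit count gives the four inequalities in \eqref{eq:cardinalities-color-sets}.
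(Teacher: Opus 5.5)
Your proposal is correct and is essentially the paper's argument: each color set is bounded by the number of carry values times the number of balanced digits, the digit count comes from $z_i=(\tau_2-\sigma_2)\fr{i\alpha}$ and $y_j=(\tau_1-\sigma_1)\fr{j\beta}$ with $\tau_1=P_1,\sigma_1=Q_1,\tau_2=R_2,\sigma_2=S_2$, and the carries are bounded through their closed forms. Your identity $c_{i,j}=\lfloor x\rfloor-\lfloor x-k\theta\rfloor$ is correct and also follows directly from \eqref{eq:carry}, but it is only a mild sharpening: the paper shows $c_{i,j}$ is an integer in an open interval of length $|k|+2$, which already gives at most $|k|+1$ values. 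For $d_{i,j}$, work from \eqref{eq:carry2} itself, because the printed closed form has the sign of its $\fr{j\beta}$ term reversed; it should read $+\fr{j\beta}(A_2-B_2)$, resp.\ $+\fr{j\beta}(C_2-D_2)$.
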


\begin{proof}
Because $y_j=(\tau_1-\sigma_1)\fr{j\beta}$ we have that $y_j\in [0,\tau_1-\sigma_1)$ or $y_j\in (\tau_1-\sigma_1,0]$, depending on the sign of $\tau_1-\sigma_1$. The balanced digits of any $r\in\R$ are among $\lfloor r\rfloor$ and $\lceil r\rceil$ so that $\bal{y_j}{i}$ is an element of the set
$\{0,\dots \lceil\tau_1-\sigma_1\rceil\}$ or the set $\{-\lceil\sigma_1-\tau_1\rceil,\dots,0\}$. In any case, there are at most $\lceil|\tau_1-\sigma_1|\rceil+1$ possible symbols $\bal{y_j}{i}$. Using \eqref{eq:sigmatau} this number can also be written as
$$
\lceil|(1-\alpha)A_1+\alpha C_1-(1-\alpha)B_1-\alpha D_1)|+1\rceil = 
\lceil|P_1-Q_1|+1\rceil.
$$
Analogously, there are at most 
$$
\lceil|\tau_2-\sigma_2|\rceil+1 = 
\lceil|(1-\beta)A_2+\beta B_2-(1-\beta)C_2-\beta D_2)|+1\rceil = 
\lceil|R_2-S_2|+1\rceil
$$
possible values of $\bal{z_i}{j}$.

The number of possible values $c_{i,j}$ depends on whether $\bal{\beta}{j}$ is 0 or 1. 
If $\bal{\beta}{j}=1$ then $c_{i,j}=\fr{iy_{j+1}}-\fr{iy_j}+\fr{i\alpha}(B_1-D_1)$
so that $c_{i,j}$ is an integer in the open interval $(-1,1+B_1-D_1)$ or
$(-(1+B_1-D_1),1)$, depending on the sign of $B_1-D_1$. In either case, the number of possible values is at most $|B_1-D_1|+1$. If $\bal{\beta}{j}=0$ then there are at most
$|A_1-C_1|+1$ possible values of $c_{i,j}$. Analogously, the number of different values that $d_{i,j}$ attains is bounded by $|C_2-D_2|+1$ when $\bal{\alpha}{i}=1$ and by $|A_2-B_2|+1$ when 
$\bal{\alpha}{i}=0$. Because $\bal{\beta}{j}=1$ and $\bal{\alpha}{i}=1$ characterize the presence of horizontal and vertical stripes in the tile, respectively, we obtain the upper bounds on the cardinalities of the horizontal and vertical color sets given in \eqref{eq:cardinalities-color-sets}.
\end{proof}

We may now prove Theorem~\ref{mainthm:an-aperiodic-tileset-for-every-quadratic-number}.

\begin{THEOREMII}
    \StatementMainTHEOREMII
\end{THEOREMII}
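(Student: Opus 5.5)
We assemble the theorem from the three ingredients already proved: Lemma~\ref{lem:a-solution-for-every-pair-of-quadratic-numbers}, Theorem~\ref{thm:balancetheorem} and Proposition~\ref{prop:sufficient-condition-quadratic-equation}.

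\textbf{Choice of $A,B,C,D$.} Since $\alpha,\beta$ lie in the same quadratic field and are both rational or both irrational, Lemma~\ref{lem:a-solution-for-every-pair-of-quadratic-numbers} gives four non-collinear integer points $A,B,C,D\in\Z^2$. For these points $(\alpha,\beta)$ solves Equation~\eqref{alpha_beta_relations}, and no two opposite sides of $ABDC$ are parallel. This proves (i).

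\textbf{Construction of $\Tcal$.} We have $(\alpha,\beta)\in[0,1]^2$ and it solves \eqref{alpha_beta_relations}. So Theorem~\ref{thm:balancetheorem} applies to these $A,B,C,D$ and yields a finite tile set $\Tcal$ with horizontal and vertical stripes. This $\Tcal$ determines $ABDC$, which gives (ii) together with the non-parallelism from the first step. It also admits a tiling with vertical stripe density $\alpha$ and horizontal stripe density $\beta$, which is (iii).

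One point needs care: each of the four classes $\Tcal_\square,\Tcal_\boxbar,\Tcal_\boxminus,\Tcal_\boxplus$ should be non-empty.
\begin{itemize}
\item If $\alpha,\beta\in(0,1)$, then $\bal{\alpha}{i}$ and $\bal{\beta}{j}$ each take both values $0$ and $1$, so all four classes occur in the default tiling.
\item In the rational case with $\alpha$ or $\beta$ equal to $0$ or $1$, some classes might be empty. Non-emptiness is not actually needed for (i)--(iv), so we do not handle this case separately.
\end{itemize}

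\textbf{Property (iv).} We apply Proposition~\ref{prop:sufficient-condition-quadratic-equation}, whose hypothesis is that \eqref{alpha_beta_relations} has finitely many solutions. The quadrilateral $ABDC$ is non-degenerate with neither pair of opposite sides parallel, so we are in case~\eqref{item:generic_position} of Proposition~\ref{prop:alpha_beta_solutions}. There the solutions are given explicitly, at most two of them, because the signs are coupled. In every valid tiling the stripe densities therefore exist and solve \eqref{alpha_beta_relations}, which is (iv).

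This step is the main obstacle: we must confirm that no degenerate case (i)--(v) of Proposition~\ref{prop:alpha_beta_solutions} occurs. This is exactly what non-collinearity and non-parallel opposite sides rule out, including the parallelogram and trapezium cases.

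\textbf{Aperiodicity when $\alpha,\beta$ are irrational.} The tile set is non-empty by (iii). Suppose some valid tiling were periodic. Then, as in the proof of Theorem~\ref{thm:not-periodic}, there would be a doubly periodic tiling with rational densities. By (iv), or by Lemma~\ref{lem:doubly-periodic-densities-satisfy-equation}, these densities solve \eqref{alpha_beta_relations}.

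So it remains to show that case~\eqref{item:generic_position} has no rational solution. The solution $(\alpha,\beta)$ is irrational, and $\alpha$ has the form $(u\pm\sqrt\Delta)/v$ with $u,v$ rational and $v\neq0$. Hence $\Delta$ is not a perfect square. The other solution then also has irrational coordinates, since it differs only in the sign of $\sqrt\Delta$. Therefore no rational solution exists, and $\Tcal$ is aperiodic.
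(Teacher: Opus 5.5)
Your proposal is correct and follows the same route as the paper. It uses Lemma~\ref{lem:a-solution-for-every-pair-of-quadratic-numbers} for (i)--(ii), Theorem~\ref{thm:balancetheorem} for (ii)--(iii), and case~(vi) of Proposition~\ref{prop:alpha_beta_solutions} together with Proposition~\ref{prop:sufficient-condition-quadratic-equation} for (iv). Your aperiodicity step is more explicit than the paper's, which only asserts that every solution is irrational: you pass to a doubly periodic tiling and use integrality of the Pl\"ucker coordinates to get $\sqrt{\Delta}\notin\Q$, but the argument is the same.
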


\begin{proof}%
    (i) Let $\alpha,\beta\in[0,1]$ be elements of a quadratic number field $K$.
From Lemma~\ref{lem:a-solution-for-every-pair-of-quadratic-numbers},
there exist four non-collinear points $A,B,C,D \in \Z^2$ such that
$(\alpha,\beta)$ is a solution to Equation~\eqref{alpha_beta_relations}.

    (ii) and (iii)
    From Lemma~\ref{lem:a-solution-for-every-pair-of-quadratic-numbers},
    we can choose $A,B,C,D$ so that opposite sides of $ABDC$ are not parallel
    since $\alpha$ and $\beta$ are both irrational or both rational.
    From Theorem~\ref{thm:balancetheorem},
there exists a finite set
$\Tcal=\Tcal_\square\cup\Tcal_\boxbar\cup\Tcal_\boxminus\cup \Tcal_\boxplus$
of Wang tiles with horizontal and vertical stripes such that
$\Tcal$ admits a tiling $w:\Z^2\longrightarrow {\cal T}$ where horizontal
    stripes (\emph{i.e.}, tiles in $\Tcal_\boxminus\cup \Tcal_\boxplus$) have
    density $\beta$, and the vertical stripes (tiles in $\Tcal_\boxbar\cup
    \Tcal_\boxplus$) have density $\alpha$, and
$\Tcal$ determines the quadrilateral $ABDC$.

    (iv)
Since the opposite sides of $ABDC$ are not parallel,
we deduce from Proposition~\ref{prop:alpha_beta_solutions}
that Equation~\eqref{alpha_beta_relations} has finitely many solutions.
    From Proposition~\ref{prop:sufficient-condition-quadratic-equation},
    we deduce that in every valid tiling by $\Tcal$, the horizontal and the
    vertical densities of stripes exist and are solutions to Equation~\eqref{alpha_beta_relations}.

If $\beta$ and $\alpha$ are irrational, then
we deduce from Proposition~\ref{prop:alpha_beta_solutions}
that every solution to Equation~\eqref{alpha_beta_relations} is irrational.
Therefore, every valid configuration $\Z^2\to\Tcal$
    is non-periodic since it must have
irrational densities of vertical and horizontal stripes.
    From (iii), the set $\Tcal$ admits a valid configuration, so we
conclude that the tile set $\Tcal$ is aperiodic.
\end{proof}

\begin{remark}
Note that if exactly one of $\alpha$ and $\beta$ is rational, then the
number of solutions is infinite. 
However, we believe that Proposition~\ref{prop:sufficient-condition-quadratic-equation} 
can be modified to show that the stripe density corresponding to the rational
value exists and is equal to it.
\end{remark}

\begin{example}
Let $\ABCD{A_1}{A_2}{B_1}{B_2}{C_1}{C_2}{D_1}{D_2}
    =\ABCD{1}{1}{0}{1}{1}{0}{-1}{-1}$.
By Proposition~\ref{prop:alpha_beta_solutions}, 
Equation~\eqref{alpha_beta_relations} has two solutions. 
In fact, \eqref{alpha_beta_relations} is equivalent to
$$
\left\{
\begin{array}{rcl}
\beta &=& \alpha\\
\alpha^2+\alpha-1 &=& 0.
\end{array}
\right.
$$
Only the solution
$$
\alpha=\beta=\varphi^{-1}=\frac{\sqrt{5}-1}{2}
$$
satisfies  $\alpha,\beta\in [0,1]$. Let us construct a tile set $\Tcal$ that satisfies Theorem~\ref{thm:balancetheorem}.
From \eqref{eq:sigmatau} and \eqref{eq:sigmatau2} we obtain
$$
\begin{array}{rclcl}
\sigma_1 &=& \sigma_2 &=& -\varphi^{-1},\\
\tau_1 &=& \tau_2 &=& 1,
\end{array}
$$
so that $\tau_1-\sigma_1=\tau_2-\sigma_2=1+\varphi^{-1}=\varphi$.
Using the choice $y_0=0$ we have, for all $j\in\Z$, that $$y_j=(\tau_1-\sigma_1)\fr{j\beta}=\varphi\fr{j\beta}\in[0,\varphi).$$ This
means that $\bal{y_j}{i}\in\{0,1,2\}$ for all $i,j\in\Z$. 
Similarly, choosing $z_0=0$ gives $\bal{z_i}{j}\in\{0,1,2\}$ for all $i,j\in\Z$. 

The calculations bounding $c_{i,j}$ at the end of the proof of  Theorem~\ref{thm:balancetheorem} show that
$$
c_{i,j}=
\left\{
\begin{array}{ll}
\fr{iy_{j+1}}-\fr{iy_j}+\fr{i\alpha}(B_1-D_1)=\fr{iy_{j+1}}-\fr{iy_j}+\fr{i\alpha}\in (-1,2), & \mbox{ when $\bal{\beta}{j}=1$},\\
\fr{iy_{j+1}}-\fr{iy_j}+\fr{i\alpha}(A_1-C_1)=\fr{iy_{j+1}}-\fr{iy_j}\in (-1,1),
& \mbox{ when $\bal{\beta}{j}=0$},\\
\end{array}
\right. 
$$
so that $c_{i,j}\in \{0,1\}$ if $\bal{\beta}{j}=1$ and $c_{i,j}\in \{0\}$ if $\bal{\beta}{j}=0$. 
Similarly, $d_{i,j}\in\{0,1\}$ when $\bal{\alpha}{i}=1$ and $d_{i,j}\in\{0\}$ when $\bal{\alpha}{i}=0$.

All this combined, we have the following vertical and horizontal color sets, with and without stripes:
$$
\begin{array}{rcl}
I_0 &=& \{0\}\times \{0\}\times\{0,1,2\},\\
I_1 &=& \{1\}\times \{0\}\times\{0,1,2\}\ \cup\ \{1\}\times \{1\}\times\{0,1,2\},\\
J_0 &=& \{0\}\times\{0,1,2\}\times \{0\},\\
J_1 &=& \{1\}\times\{0,1,2\}\times \{0\}\ \cup\ \{1\}\times\{0,1,2\}\times \{1\}.\\
\end{array}
$$
So there are nine horizontal colors and nine vertical colors. Taking all tiles with these colors and satisfying  \eqref{eq:fourbulletpoints} yields an aperiodic Wang tile set with 108 tiles. As discussed in Remark~\ref{rem:effective}, not all these tiles may be necessary. In fact, computer experiments seem to indicate that only 80 of these 108 tiles come up from \eqref{eq:balancedtiles} for any $i,j\in\Z$.
\end{example}

\section{Open questions}
\label{sec:open}

We finish this article with some open questions.
First, one may wonder if statement (iv) of
Theorem~\ref{mainthm:an-aperiodic-tileset-for-every-quadratic-number}
can be improved. More precisely, we may ask the following question.

\begin{question}\label{question:improve-ThmB-iv}
    Let $\Tcal$ be a set of Wang tile constructed
    in Theorem~\ref{mainthm:an-aperiodic-tileset-for-every-quadratic-number}
    from some pair $(\alpha,\beta)$ of quadratic numbers
    using Theorem~\ref{thm:balancetheorem}.
    Is it true that for every configuration $w\in\Omega_\Tcal$,
    the vertical and the horizontal densities of stripes exist and are equal to
    $\alpha$ and $\beta$, respectively?
\end{question}

For instance, when
    $\ABCD{A_1}{A_2}{B_1}{B_2}{C_1}{C_2}{D_1}{D_2}
    =\ABCD{1}{3}{1}{0}{0}{-1}{-3}{-3}$,
Equation~\eqref{alpha_beta_relations} has two solutions in $[0,1]^2$: 
\[
    (\alpha,\beta)\approx(0.361325, 0.589197)
    \quad\text{ and }\quad
    (\alpha',\beta')\approx(0.638675, 0.18858).
\]
Let $\Tcal$ be the tile set constructed as in Theorem~\ref{thm:balancetheorem} from the solution $(\alpha,\beta)$.
From Theorem~\ref{thm:balancetheorem}, we know there exists a configuration in $\Omega_\Tcal$ such that 
the densities of vertical and horizontal stripes is $(\alpha,\beta)$.
But, is it possible to show that \emph{for every} configuration in $\Omega_\Tcal$,
the densities of vertical and horizontal stripes is $(\alpha,\beta)$?
From Proposition~\ref{prop:sufficient-condition-quadratic-equation}, 
we know that for every configuration in $\Omega_\Tcal$,
the densities of vertical and horizontal stripes is a solution to the equation.
So, one only needs to rule out the possibility of having densities of vertical
and horizontal stripes equal to the other solution $(\alpha',\beta')$.

Of course, to achieve that, Theorem~\ref{thm:balancetheorem} 
will need to become stronger, maybe
providing additional information about the tile set 
involving $\sigma_1$, $\tau_1$, $\sigma_2$ and $\tau_2$.
Because the union $\Tcal\cup\Tcal'$ is also a tile set which determines the same
quadrilateral $ABDC$ if 
$\Tcal$ is the tile set generated from $(\alpha,\beta)$
and $\Tcal'$ is the tile set generated from $(\alpha',\beta')$.
And there are configurations in the Wang shift
$\Omega_{\Tcal\cup\Tcal'}$ with both possible values of stripe densities. 
Bounding the elements of the matrices \eqref{eq:balancedtiles} is not sufficient to distinguish
 $\Tcal$ and $\Tcal'$ and, in fact, the tile sets constructed effectively for  $(\alpha,\beta)$ and  $(\alpha',\beta')$ in Remark~\ref{rem:effective}  are identical.

A more general question is about the Wang shifts generated by these tile sets.
In \cite{MR4944989}, minimality of the metallic mean Wang shifts was proved
using a criterion based on its self-similarity given as a 2-dimensional
rectangular substitution.
Can this method be used or extended to answer the following question?

\begin{question}\label{question:minimality}
    Let $\Tcal$ be a set of Wang tiles constructed
    in Theorem~\ref{mainthm:an-aperiodic-tileset-for-every-quadratic-number}
    from some pair of quadratic numbers
    using Theorem~\ref{thm:balancetheorem}.
    Is the Wang shift $\Omega_\Tcal$ minimal as a dynamical system?
\end{question}

Note that a positive answer to Question~\ref{question:minimality} implies
that Question~\ref{question:improve-ThmB-iv} also has a positive answer.

For Question~\ref{question:minimality},
the first subcases to consider are the solutions $(\alpha,\beta)$ for which the
associated Wang shift is expected to be self-similar, as in the case of
metallic mean Wang tiles.
Recall that a quadratic surd is said to be \emph{reduced} if $\zeta$ > 1 and its conjugate $\zeta'$
satisfies the inequalities $-1 < \zeta' < 0$.
Galois proved that if $\zeta$ is a reduced quadratic surd, 
then the continued fractions for $\zeta$ and for $(-1/\zeta')$ are both purely periodic.

\begin{question}\label{question:self-similarity}
    Let $\alpha,\beta$ be two reduced quadratic surds in the same quadratic number field
    and $\alpha',\beta'$ respectively be their Galois algebraic conjugates.
    Let $\Tcal$ be a set of Wang tiles constructed
    in Theorem~\ref{mainthm:an-aperiodic-tileset-for-every-quadratic-number}
    from the pair $(-\alpha',-\beta')\in[0,1]^2$
    using Theorem~\ref{thm:balancetheorem}.
    Is the Wang shift $\Omega_\Tcal$ self-similar?
    In other words,
    does there exist an expansive 2-dimensional substitution $\omega:\Omega_\Tcal\to\Omega_\Tcal$
    such that $\Omega_\Tcal=\overline{\omega(\Omega_\Tcal)}^\sigma$?
\end{question}

\section*{Acknowledgements}

This work was partly funded from France's Agence Nationale de la Recherche
(ANR) project IZES (ANR-22-CE40-0011) and
by the Research Council of Finland under project number  354965.
It was also supported by grants from the
\emph{Symbolic Dynamics and Arithmetic Expansions}
(SymDynAr) Project, co-funded by ANR (ANR-23-CE40-0024)
and FWF (\href{https://dx.doi.org/10.55776/I6750}{I 6750}),
the Austrian Science Fund.

The second author acknowledges Université de Bordeaux's program
``\textit{Mobilité internationale des personnels de recherche}''
partially supporting a one-year stay at CRM-CNRS in Montréal (2025-2026)
within the \emph{Laboratoire d'Algèbre, de Combinatoire et d'Informatique Mathématique}
(LACIM) at \emph{Université du Québec à Montréal}.

\bibliographystyle{alpha-first-name-initials-with-doi}

\bibliography{biblio}

\begin{thebibliography}{SMKGS24}

\bibitem[AGS92]{MR1156132}
R. Ammann, B. Gr\"unbaum, and G.~C. Shephard.
\newblock Aperiodic tiles.
\newblock {\em Discrete Comput. Geom.}, 8(1):1--25, 1992.

\bibitem[Aki12]{akiyama_note_2012}
S. Akiyama.
\newblock A {Note} on {Aperiodic} {Ammann} {Tiles}.
\newblock {\em Discrete Comput. Geom.}, 48(3):702--710, October 2012.
\newblock \doi{10.1007/s00454-012-9418-4}.

\bibitem[Ber66]{MR0216954}
R. Berger.
\newblock The undecidability of the domino problem.
\newblock {\em Mem. Amer. Math. Soc. No.}, 66:72, 1966.

\bibitem[BG13]{MR3136260}
M. Baake and U. Grimm.
\newblock {\em Aperiodic {O}rder. {V}ol. 1}, volume 149 of {\em Encyclopedia of
  Mathematics and its Applications}.
\newblock Cambridge University Press, Cambridge, 2013.
\newblock \doi{10.1017/CBO9781139025256}.

\bibitem[BKOR87]{zbMATH04030361}
H.~J.~M. Bos, C. Kers, F. Oort, and D.~W. Raven.
\newblock Poncelet's closure theorem.
\newblock {\em Expo. Math.}, 5:289--364, 1987.

\bibitem[BR10]{MR2742574}
V. Berth\'e and M. Rigo, editors.
\newblock {\em Combinatorics, automata and number theory}, volume 135 of {\em
  Encyclopedia of Mathematics and its Applications}.
\newblock Cambridge University Press, Cambridge, 2010.
\newblock \doi{10.1017/CBO9780511777653}.

\bibitem[Cul96]{MR1417576}
K. Culik, II.
\newblock An aperiodic set of {$13$} {W}ang tiles.
\newblock {\em Discrete Math.}, 160(1-3):245--251, 1996.
\newblock \doi{10.1016/S0012-365X(96)00118-5}.

\bibitem[DM25]{zbMATH08141186}
V. Dragovi{\'c} and M.~H. Murad.
\newblock Parable of the parabola.
\newblock {\em Expo. Math.}, 43(6):36, 2025.
\newblock \doi{10.1016/j.exmath.2025.125717}.
\newblock Id/No 125717.

\bibitem[ENP07]{MR2369448}
S. Eigen, J. Navarro, and V.~S. Prasad.
\newblock An aperiodic tiling using a dynamical system and {B}eatty sequences.
\newblock In {\em Dynamics, ergodic theory, and geometry}, volume~54 of {\em
  Math. Sci. Res. Inst. Publ.}, pages 223--241. Cambridge Univ. Press,
  Cambridge, 2007.
\newblock \doi{10.1017/CBO9780511755187.009}.

\bibitem[FGMn24]{MR4822285}
D. Frettl\"oh, A. Garber, and N. Ma\~nibo.
\newblock Substitution tilings with transcendental inflation factor.
\newblock {\em Discrete Anal.}, pages Paper No. 11, 24, 2024.
\newblock \doi{10.19086/da.1254}.

\bibitem[Fog02]{MR1970385}
N.~P. Fogg.
\newblock {\em Substitutions in Dynamics, Arithmetics and Combinatorics},
  volume 1794 of {\em Lecture Notes in Mathematics}.
\newblock Springer-Verlag, Berlin, 2002.
\newblock Edited by V. Berth\'{e}, S. Ferenczi, C. Mauduit and A. Siegel.
\newblock \doi{10.1007/b13861}.

\bibitem[Fre17]{MR3791847}
D. Frettl\"{o}h.
\newblock More inflation tilings.
\newblock In {\em Aperiodic order. {V}ol. 2}, volume 166 of {\em Encyclopedia
  Math. Appl.}, pages 1--37. Cambridge Univ. Press, Cambridge, 2017.

\bibitem[GS87]{MR857454}
B. Gr\"unbaum and G.~C. Shephard.
\newblock {\em Tilings and patterns}.
\newblock W. H. Freeman and Company, New York, 1987.

\bibitem[Hoc16]{MR3525488}
M. Hochman.
\newblock Multidimensional shifts of finite type and sofic shifts.
\newblock In {\em Combinatorics, words and symbolic dynamics}, volume 159 of
  {\em Encyclopedia Math. Appl.}, pages 296--358. Cambridge Univ. Press,
  Cambridge, 2016.
\newblock \doi{10.1017/CBO9781139924733.010}.

\bibitem[HP94]{zbMATH00598493}
W.~V.~D. Hodge and D. Pedoe.
\newblock {\em Methods of algebraic geometry. {Volume} {I}. {Book} {I}:
  {Algebraic} preliminaries. {Book} {II}: {Projective} space}.
\newblock Camb. Math. Libr. Cambridge: Cambridge University Press, 1994.

\bibitem[Jan21]{jang_directional_2021}
H. Jang.
\newblock {\em Directional {Expansivenss}}.
\newblock {PhD} {Thesis}, The George Washington University, August 2021.

\bibitem[JR21]{zbMATH07421483}
E. Jeandel and M. Rao.
\newblock An aperiodic set of 11 {Wang} tiles.
\newblock {\em Adv. Comb.}, 2021:37, 2021.
\newblock \doi{10.19086/aic.18614}.
\newblock Id/No 1.

\bibitem[JR25]{jang_directional_2025}
H. Jang and E.~A. Robinson, Jr.
\newblock Directional expansiveness for {Rd}-actions and for {Penrose} tilings,
  April 2025.
\newblock \arxiv{2504.10838}.

\bibitem[Kar96]{MR1417578}
J. Kari.
\newblock A small aperiodic set of {W}ang tiles.
\newblock {\em Discrete Math.}, 160(1-3):259--264, 1996.
\newblock \doi{10.1016/0012-365X(95)00120-L}.

\bibitem[Lab19]{MR3978536}
S. Labb\'{e}.
\newblock A self-similar aperiodic set of 19 {W}ang tiles.
\newblock {\em Geom. Dedicata}, 201:81--109, 2019.
\newblock \doi{10.1007/s10711-018-0384-8}.

\bibitem[Lab20]{labbe_three_2020}
S. Labbé.
\newblock Three characterizations of a self-similar aperiodic 2-dimensional
  subshift.
\newblock December 2020.
\newblock \arxiv{2012.03892}, 46 p., book chapter in preparation.

\bibitem[Lab21]{MR4226493}
S. Labb\'{e}.
\newblock Substitutive structure of {J}eandel-{R}ao aperiodic tilings.
\newblock {\em Discrete Comput. Geom.}, 65(3):800--855, 2021.
\newblock \doi{10.1007/s00454-019-00153-3}.

\bibitem[Lab25a]{MR4944989}
S. Labb\'e.
\newblock Metallic mean {W}ang tiles {I}: self-similarity, aperiodicity and
  minimality.
\newblock {\em Forum Math. Sigma}, 13:Paper No. e133, 68, 2025.
\newblock \doi{10.1017/fms.2025.10069}.

\bibitem[Lab25b]{MR4963140}
S. Labb\'e.
\newblock Metallic mean {W}ang tiles {II}: the dynamics of an aperiodic
  computer chip.
\newblock {\em Forum Math. Sigma}, 13:Paper No. e155, 51, 2025.
\newblock \doi{10.1017/fms.2025.10098}.

\bibitem[Lep24]{Lepsova-2024}
J. Lep\v{s}ov\'{a}.
\newblock {\em Substitutive structures in combinatorics, number theory, and
  discrete geometry}.
\newblock {PhD} {Thesis}, Universit\'e de Bordeaux and Czech Technical
  University in Prague, 2024.
\newblock Available online at \url{https://theses.fr/2024BORD0083} or
  \url{https://theses.hal.science/tel-04679032}.

\bibitem[Lin04]{MR2078846}
D. Lind.
\newblock Multi-dimensional symbolic dynamics.
\newblock In {\em Symbolic dynamics and its applications}, volume~60 of {\em
  Proc. Sympos. Appl. Math.}, pages 61--79. Amer. Math. Soc., Providence, RI,
  2004.
\newblock \doi{10.1090/psapm/060/2078846}.

\bibitem[LM95]{MR1369092}
D. Lind and B. Marcus.
\newblock {\em An Introduction to Symbolic Dynamics and Coding}.
\newblock Cambridge University Press, Cambridge, 1995.
\newblock \doi{10.1017/CBO9780511626302}.

\bibitem[ML71]{zbMATH03367095}
S. Mac~Lane.
\newblock {\em Categories for the working mathematician}, volume~5 of {\em
  Grad. Texts Math.}
\newblock Springer, Cham, 1971.
\newblock \doi{book/10.1007/978-1-4757-4721-8}.

\bibitem[Moz89]{MR1014984}
S. Mozes.
\newblock Tilings, substitution systems and dynamical systems generated by
  them.
\newblock {\em J. Analyse Math.}, 53:139--186, 1989.
\newblock \doi{10.1007/BF02793412}.

\bibitem[Mus37]{zbMATH02521565}
J.~R. Musselman.
\newblock On four lines and their associated parabola.
\newblock {\em Am. Math. Mon.}, 44:513--521, 1937.
\newblock \doi{10.2307/2301227}.

\bibitem[Pen74]{penrose_role_1974}
R. Penrose.
\newblock The r\^ole of aesthetics in pure and applied mathematical research.
\newblock {\em Bull. Inst. Math. Appl.}, 10(Jul-Aug):266--271, 1974.

\bibitem[Rob71]{MR0297572}
R.~M. Robinson.
\newblock Undecidability and nonperiodicity for tilings of the plane.
\newblock {\em Invent. Math.}, 12:177--209, 1971.
\newblock \doi{10.1007/BF01418780}.

\bibitem[Sch01]{MR1861953}
K. Schmidt.
\newblock Multi--dimensional symbolic dynamical systems.
\newblock In {\em Codes, Systems, and Graphical Models ({M}inneapolis, {MN},
  1999)}, volume 123 of {\em IMA Vol. Math. Appl.}, pages 67--82. Springer, New
  York, 2001.
\newblock \doi{10.1007/978-1-4613-0165-3\_3}.

\bibitem[Sen95]{zbMATH00768067}
M. Senechal.
\newblock {\em Quasicrystals and geometry}.
\newblock Cambridge: Cambridge Univ. Press, 1995.

\bibitem[SMKGS24]{MR4770585}
D. Smith, J.~S. Myers, C.~S. Kaplan, and C. Goodman-Strauss.
\newblock An aperiodic monotile.
\newblock {\em Comb. Theory}, 4(1):Paper No. 6, 91, 2024.
\newblock \doi{10.5070/C64163843}.

\bibitem[Wal82]{MR648108}
P. Walters.
\newblock {\em An Introduction to Ergodic Theory}, volume~79 of {\em GTM}.
\newblock Springer-Verlag, New York-Berlin, 1982.

\bibitem[Wan61]{wang_proving_1961}
H. Wang.
\newblock Proving theorems by pattern recognition -- {II}.
\newblock {\em Bell System Technical Journal}, 40(1):1--41, January 1961.
\newblock \doi{10.1002/j.1538-7305.1961.tb03975.x}.

\end{thebibliography}

\end{document}